\documentclass{amsart}
\usepackage[english]{babel}
\usepackage{graphicx}
\usepackage[hidelinks]{hyperref} 
\usepackage{amssymb}
\usepackage{amsmath}
\usepackage[foot]{amsaddr}
\usepackage{amsfonts}
\usepackage{xcolor}
\usepackage{mathtools}
\usepackage{algpseudocode}
\usepackage{algorithm}
\usepackage{bm,mathrsfs,braket}
\usepackage[top=1in, bottom=1.25in, left=1.25in, right=1.25in]{geometry}
\usepackage{comment}
\usepackage{mathtools}
\usepackage{xparse}
\usepackage[normalem]{ulem}

\graphicspath{{images/}}
\usepackage{float}

\theoremstyle{plain}
\newtheorem{thm}{Theorem}[section]	

\newtheorem{cor}[thm]{Corollary}
\newtheorem{prop}[thm]{Proposition}

\newtheorem{rmk}{Remark}[section]

\theoremstyle{definition}

\newcommand{\R}{\mathbb{R}}

\newcommand{\N}{\mathbb{N}}

\newcommand{\dmu}{\mathrm{d}\mu}
\newcommand{\dxi}{\mathrm{d}\xi}
\newcommand{\E}{\mathbb{E}}
\newcommand{\RV}{\operatorname{RV}(I)}

\newcommand{\NPC}{N_{PC}}

\newcommand{\func}[1]{\mathbf{#1}}
\newcommand{\vect}[1]{\mathbf{#1}}

\definecolor{darkgreen}{rgb}{0.0, 0.6, 0.2}

\usepackage{todonotes} % remove later
\NewDocumentCommand{\ICG}{m o}{
  \textcolor{red}{#1}
  \IfValueT{#2}{\ \textcolor{black}{\sout{#2}}}
}

\makeatletter
\algnewcommand{\LineComment}[1]{\Statex \hskip\ALG@thistlm {\color{white}\textbf{Input:}} #1}
\makeatother

\begin{document}
% \nocite{*}

% \title[A Stochastic Investigation of Bifurcation via PCE]{A Stochastic Investigation of Bifurcating Phenomena via Polynomial Chaos Expansion}
\title[Stochastic bifurcation analysis via polynomial chaos]{Stochastic bifurcation analysis via polynomial chaos: consistency and convergence of branch-approximating solutions}

\author{Giacomo Venier$^1$,
Isabella Carla Gonnella$^1$,
Federico Pichi$^1$,
Gianluigi Rozza$^1$}
\address{$^1$ mathLab, Mathematics Area, SISSA, via Bonomea 265, I-34136 Trieste, Italy}

\begin{abstract}
Parameter-dependent dynamical systems that exhibit bifurcations pose significant computational challenges, as traditional continuation methods require repeated, costly simulations across large ranges of parameter values to capture sudden qualitative changes in the solution. 
In this work, we propose a systematic approach to reconstruct the branches of the entire bifurcation diagram in a single numerical solver leveraging generalized Polynomial Chaos (PC) expansion. 
By treating the parameter as a random variable, we cast the deterministic parameter-dependent model in a weak stochastic form, and then use a Galerkin projection to recover bifurcation branches globally across the parameter domain without iterative pointwise continuation.
We show that the resulting Galerkin system, in the non-uniqueness regime, produces many discrete algebraic roots that naturally split into two classes: highly oscillatory solutions and branch-approximating ones.
We develop a rigorous theoretical framework that establishes consistency, proves convergence of the branch-approximating solutions to the true steady states, and guarantees uniqueness of the Galerkin solution under suitable assumptions.
Finally, we confirm these theoretical results with numerical experiments on several parameter-dependent ordinary differential equations (ODEs), demonstrating the accuracy and computational efficiency of our single-run framework in capturing complex bifurcation diagrams for both scalar and vector-valued systems.

\noindent \textbf{Keywords:} Bifurcation Analysis, Polynomial Chaos Expansion, Galerkin Projection, Non-linear Dynamical Systems, Convergence Analysis
\end{abstract}

\maketitle

\section{Introduction}\label{sec:intro}

The study of parameter-dependent non-linear dynamical systems plays a central role in applied mathematics. 
Specifically, bifurcation analysis studies time‑dependent attractors, e.g.\ limit cycles and quasi‑periodic motions, and more in general how equilibrium manifolds change w.r.t.\ parameter variations, identifying critical values where qualitative transitions occur.
Focusing on stationary equilibria, such topological transitions of the steady states are classified by local bifurcation theory into canonical normal forms~\cite{wiggins2003, Kuznetsov2023}, such as the \textit{saddle-node} bifurcation, where two equilibria collide and annihilate, the \textit{transcritical  bifurcation}, which involves the exchange of stability of two steady states, and the \textit{pitchfork bifurcation}, which is characterized by the emergence of symmetric branches. 
Traditionally, these equilibrium manifolds are called \emph{solution branches}, and are numerically identified through continuation methods, such as predictor-corrector algorithms and pseudo-arclength continuation~\cite{allgower1990numerical, UeckerNumericalContinuationBifurcation2021,Seydel2010}. 
While these iterative procedures are well-established, they suffer from inherent limitations. They require fine-tuning of stepping parameters, exhibit severe sensitivity near turning points or bifurcation singularities, and are vulnerable to undesirable branch jumping. 
Furthermore, since they evaluate the system point-by-point, this many-query application can become computationally prohibitive.

To overcome the limitations of sequential stepping, several advanced numerical techniques have been developed to enhance the detection of bifurcating branches. 
For instance, deflated continuation methods~\cite{FarrellDeflationTechniquesFinding2015, pintore2020efficient,PichiDeflationbasedCertifiedGreedy2025a,BoulleBifurcationAnalysisTwodimensional2022,KumarBifurcationCurveDetection2026} have been introduced to systematically discover disconnected branches by penalizing previously computed solutions. 
In the context of parametrized partial differential equations, reduced order modeling (ROM) techniques~\cite{Rozza2014, QuarteroniReducedBasisMethods2016} have proven to be highly effective in accelerating bifurcation analysis by projecting the full-order problem onto a low-dimensional manifold~\cite{DengLoworderModelSuccessive2020a,PichiDrivingBifurcatingParametrized2022,OlshanskiiApproximatingBranchSolutions2025,HerreroRBReducedBasis2013}. 
Furthermore, data-driven approaches, including artificial neural networks~\cite{PichiArtificialNeuralNetwork2023,LiDatadrivenModelingBifurcation2025,PiaSurrogateNormalformsNumerical2025,PichiGraphConvolutionalAutoencoder2024,ShahabNeuralNetworksBifurcation2025} and sparse identification~\cite{BruntonDiscoveringGoverningEquations2016,TomadaSparseIdentificationBifurcating2025,ContiReducedOrderModeling2023}, have recently been employed to construct efficient surrogates for complex bifurcating dynamics.

In parallel to deterministic model reduction, bifurcations have also been defined in stochastic systems, which are widely used in real-world applications to account for uncertainties, such as unknown physical parameters or noisy initial and boundary conditions~\cite{KuehnUncertaintyQuantificationAnalysis2024,VenturiStochasticBifurcationAnalysis2010, xiu2003modeling, lemaitre2009, patil2023}. While the broader field of stochastic bifurcations typically investigates the dynamic response of systems to additive or multiplicative noise, recent work~\cite{gonnella2024stochastic} has demonstrated that parameter randomization can also be strategically leveraged as a numerical tool to detect \emph{deterministic} bifurcations. 
Specifically, it has been shown that treating the bifurcation parameter as a random variable, and exploiting Polynomial Chaos (PC) expansions~\cite{ghanem1991stochastic, xiu2010}, allows to get an approximation of all the deterministic branches simultaneously.
Indeed, the original parameter-dependent problem is cast into a weak form over both the spatial and parameter space, and the obtained stochastic Galerkin system admits solutions with multi-modal probability density functions, which exhibit peaks in correspondence to the values of the deterministic bifurcation branches.

However, while this PC-based framework has shown remarkable empirical success even for non-trivial partial differential equations (PDEs) modeling fluid-dynamic instabilities, applying such strategy to highly non-linear dynamical systems introduces profound, yet unexplored, mathematical challenges~\cite{debusschere2005}. 
For instance, as dictated by Bézout's Theorem for polynomial systems~\cite{Fulton1998, SommeseWampler2005}, projecting a non-linear vector field onto a high-degree polynomial subspace leads to a combinatorial explosion in the number of discrete algebraic roots, and thus the stochastic Galerkin system admits many additional solutions on top of the multi-modal ones found in \cite{gonnella2024stochastic}.

The main contribution of this work is the rigorous identification and mathematical analysis of a particular class of stochastic Galerkin solutions, which we call \textit{branch-approximating}.
They form a sparse subclass of the full Galerkin solution set, whose cardinality matches exactly the number of continuous deterministic bifurcation branches in the parameter domain.
In this work, we develop a novel comprehensive theoretical framework that establishes consistency and convergence of Galerkin branch-approximating solutions to the true deterministic branches for scalar/vector non-linear parameter-dependent ODEs. 
Furthermore, we demonstrate that by coupling local non-degeneracy with asymptotic geometric constraints, the Galerkin projection inherently preserves some original-system's topological properties, such as non-existence or uniqueness of the solution.

Building on these theoretical guarantees, we also provide a degree-continuation algorithm to isolate and reconstruct the branch-approximating solutions from the larger set of Galerkin solutions.
We validate our framework through extensive numerical experiments, successfully computing the solution branches for canonical normal-forms models such as the pitchfork and the s-shaped bifurcation (which consists of two saddle-node bifurcations), and extend this validation to the multi-dimensional double-pitchfork of the genetic toggle switch system~\cite{gardner2000, bollenbach2023}, and to the Lorenz system~\cite{lorenz1963deterministic, sparrow1982lorenz} for chaotic dynamics.
For this latter benchmark, we also verified our uniqueness result, identifying a non-bifurcating parameter regime and proving that the PCE-Galerkin formulation admits a unique branch-approximating solution.
Finally, we showcase the robustness of our framework by analyzing a degenerate dynamical system, demonstrating its capability to capture an infinite number of steady states across a 2-dimensional equilibrium manifold.

The remainder of the paper is organized as follows. 
Section~\ref{sec:ode-bifurcations} formulates the parameter-dependent ODE problem, introduces the weak formulation via Polynomial Chaos, and highlights the roots proliferation induced by the Galerkin projection.
Section~\ref{sec:theoretical-results} is devoted to the theoretical analysis, proving the consistency, convergence, and well-posedness of the branch-approximating solutions. 
Section~\ref{sec:numerical-results} presents the numerical validation on the canonical normal forms and the genetic toggle switch, followed by the rigorous analysis of the Lorenz system and the degenerate spherical manifold. 
Conclusions are drawn in Section~\ref{sec:conclusions}.
\section{Bifurcation analysis via Polynomial Chaos Expansion}\label{sec:ode-bifurcations}

Let $I\subset \R$ be a compact interval and let
$$
\func{f}:\R^n\times I\to \R^n,\qquad \text{s.t.}\qquad (\vect{u},\mu)\mapsto \func{f}(\vect{u},\mu),
$$
be a continuous and locally Lipschitz function in the first variable, uniformly with respect to the parameter, i.e.\ for every $R>0$ there exists $L_R>0$ such that
\begin{equation}
\|\func{f}(\vect{u}_1,\mu)-\func{f}(\vect{u}_2,\mu)\|_{\R^n} \le L_R\,\|\vect{u}_1-\vect{u}_2\|_{\R^n}
\quad \text{for all }\ \|\vect{u}_1\|_{\R^n}, \|\vect{u}_2\|_{\R^n} \le R \ \text{and}\ \mu\in I.
\label{eq:LipCompUnif} 
\end{equation}
Under these assumptions, by considering the time-dependent function $\func{u}:t\to\func{u}(t)\in\R^n$, the Cauchy problem defined by the autonomous parameter-dependent differential equation
$$
\dot{\func{u}}=\func{f}(\func{u},\mu),
$$
admits a unique local solution, for each fixed parameter $\mu\in I$ and each initial datum, by the Cauchy--Lipschitz theorem~\cite{arnold1992ordinary,rudin1976,wiggins2003}. A key aspect in the analysis of such systems is the identification of their \emph{steady states} $\vect{u}_*\in\R^n$ for any given fixed parameter $\mu_*\in I$, which satisfy
$$
\func{f}(\vect{u}_*,\mu_*) = \vect{0}.
$$
If $\func{f}\in\mathcal{C}^1(\R^n\times I; \R^n)$ and the Jacobian matrix $D_{\vect{u}} \func{f}(\vect{u}_*,\mu_*)$ is non-singular, then the Implicit Function Theorem~\cite{dini1878, rudin1976} yields an open neighborhood $I_{\mu_*}\subset I$ of $\mu_*$ and a unique function $\bar{\func{u}}\in \mathcal{C}^1(I_{\mu_*}; \R^n)$, referred to as a \emph{solution branch}, such that
\begin{equation}
    \func{f}(\bar{\func{u}}(\mu),\mu)=\vect{0} \quad \forall\,\mu\in I_{\mu_*}, \qquad\text{with }\quad \bar{\func{u}}(\mu_*)=\vect{u}_*. \tag{PB1}
    \label{eq:PB1}
\end{equation}
This branch can be continued uniquely as long as the non-degeneracy condition on the Jacobian persists. In general, either $I_{\mu_*}=I$, or at least one endpoint of $I_{\mu_*}$ corresponds to a singularity of the Jacobian matrix $D_{\vect{u}} \func{f}(\bar{\func{u}}(\mu),\mu)$. This typically signals the presence of a \emph{bifurcation}, possibly leading to qualitative changes in the set of steady states~\cite{wiggins2003,Seydel2010,Kuznetsov2023} (e.g.\ loss of uniqueness, changes of stability, or variations in their number).
In this setting, retrieving all the solution branches of~\eqref{eq:PB1} over the full interval $I$ by standard numerical continuation~\cite{allgower1990numerical, Seydel2010} is computationally costly, as it requires many-query and large scale simulations, and is likely to suffer from numerical vulnerabilities such as step-size sensitivity, convergence failures at turning points, and undesired branch jumping.
To overcome these limitations, we shift the approach from pointwise evaluations to global reconstruction, by introducing a weak formulation of the problem in the parameter space. By doing so, we approximate the solution branches as continuous curves $\bar{\func{u}}\in\mathcal{C}(I; \R^n)$ over the interval $I$, eliminating the need for iterative procedures.

Now, consider a curve in the parameter space $\func{u}\in\mathcal{C}(I;\R^n)$ supported in the whole compact interval $I$, and define the \emph{Nemytskii operator}~\cite{appell1990nonlinear} denoted with $\mathcal{F}:\mathcal{C}(I; \R^n)\to \mathcal{C}(I; \R^n)$ and associated to $\func{f}$ as
$$
\mathcal{F}(\func{u})(\mu) \coloneq \func{f}\bigl(\func{u}(\mu),\mu\bigr), \qquad \mu\in I,
$$
which is well defined by the continuity of $\func{f}$, i.e.\ $\mathcal{F}(\func{u})\in \mathcal{C}(I; \R^n)$ for every $\func{u}\in\mathcal{C}(I; \R^n)$. 
Moreover, under~\eqref{eq:LipCompUnif}, it is locally Lipschitz on $\mathcal{C}(I; \R^n)$ with respect to the sup norm $\|\cdot\|_\infty$, and in particular Lipschitz on bounded sets of $\mathcal{C}(I; \R^n)$ (the proof is reported in Proposition~\ref{prop:LocalLipschitzNemytskii}).

Using the standard $L^2(I; \R^n)$ inner product on $\mathcal{C}(I; \R^n)$, we say that $\bar{\func{u}}\in\mathcal{C}(I; \R^n)$ is a \emph{weak solution} of problem \eqref{eq:PB1} if
\begin{equation}
    \int_I \langle \mathcal{F}(\bar{\func{u}})(\mu), \func{v}(\mu) \rangle_{\R^n}\,\dmu = 0 \qquad \forall\,\func{v}\in\mathcal{C}(I; \R^n).
    \label{eq:WeakFormulation}
\end{equation}
It is straightforward to prove the following equivalence result.

\begin{prop}[Strong--weak equivalence]\label{prop:strong-weak-equivalence}
Let $\bar{\func{u}}\in\mathcal{C}(I; \R^n)$, then
$$
\mathcal{F}(\bar{\func{u}})(\mu)=\vect{0} \quad \forall\,\mu\in I
\quad \iff \quad
\int_I \langle \mathcal{F}(\bar{\func{u}})(\mu), \func{v}(\mu) \rangle_{\R^n}\,\dmu = 0 \quad \forall\,\func{v}\in \mathcal{C}(I; \R^n).
$$
\end{prop}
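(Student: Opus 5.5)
The forward implication is immediate. If $\mathcal{F}(\bar{\func{u}})(\mu)=\vect{0}$ for every $\mu\in I$, then the integrand $\langle \mathcal{F}(\bar{\func{u}})(\mu),\func{v}(\mu)\rangle_{\R^n}$ vanishes identically on $I$ for any $\func{v}\in\mathcal{C}(I;\R^n)$. Its integral is therefore zero, so $\bar{\func{u}}$ satisfies \eqref{eq:WeakFormulation}.

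For the converse, the plan is to test the weak formulation against a single, well-chosen function rather than invoke a general fundamental lemma of the calculus of variations. Set $\func{g}\coloneq\mathcal{F}(\bar{\func{u}})$. Since $\func{f}$ is continuous and $\bar{\func{u}}\in\mathcal{C}(I;\R^n)$, we have $\func{g}\in\mathcal{C}(I;\R^n)$, as already observed when the Nemytskii operator was introduced. Hence $\func{v}=\func{g}$ is an admissible test function in \eqref{eq:WeakFormulation}, and it gives
\[
\int_I \|\func{g}(\mu)\|_{\R^n}^2\,\dmu = 0 .
\]
The integrand $\mu\mapsto\|\func{g}(\mu)\|^2_{\R^n}$ is continuous and nonnegative on $I$.

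The only step needing care is passing from a vanishing integral to pointwise vanishing, and I would argue it by contradiction. Suppose $\|\func{g}(\mu_0)\|^2=\varepsilon>0$ for some $\mu_0\in I$. By continuity there is a relatively open subinterval $J\subset I$ containing $\mu_0$ on which $\|\func{g}\|^2>\varepsilon/2$. Then the integral is at least $(\varepsilon/2)\,|J|>0$, which is a contradiction.

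This needs $|J|>0$, that is, $I$ must be a nondegenerate compact interval, which is implicit in the setting. Under that assumption $\func{g}\equiv\vect{0}$ on $I$, which is precisely $\mathcal{F}(\bar{\func{u}})(\mu)=\vect{0}$ for all $\mu\in I$.
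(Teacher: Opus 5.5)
Your proof is correct and follows the paper's argument: both test the weak formulation with $\func{v}=\mathcal{F}(\bar{\func{u}})$ itself. Both then use continuity to show $\int_I\|\mathcal{F}(\bar{\func{u}})(\mu)\|_{\R^n}^2\,\dmu>0$ on a subinterval around a point where the residual is nonzero, which gives the contradiction. Your version is slightly more careful: you make the lower bound $(\varepsilon/2)\,|J|$ explicit, and you note that $J$ need only be relatively open in $I$ and that $I$ must be nondegenerate.
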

\begin{proof}
The implication ``$\Rightarrow$'' is immediate. For the converse, assume that
$
\int_I \langle \mathcal{F}(\bar{\func{u}})(\mu), \func{v}(\mu) \rangle_{\R^n}\,\dmu = 0
$
for all $\func{v}\in \mathcal{C}(I; \R^n)$. If $\mathcal{F}(\bar{\func{u}})(\tilde{\mu})\neq \vect{0}$ for some $\tilde{\mu}\in I$, then by continuity there exists an open interval $J\subset I$ containing $\tilde{\mu}$ such that $\mathcal{F}(\bar{\func{u}})\neq\vect{0}$ on $J$. Choosing $\func{v}\coloneq\mathcal{F}(\bar{\func{u}})$ yields
$$
\int_I \|\mathcal{F}(\bar{\func{u}})(\mu)\|_{\R^n}^2\,\dmu>0,
$$
which is a contradiction. Therefore, $\mathcal{F}(\bar{\func{u}})(\mu)=\vect{0}$ for all $\mu\in I$.
\end{proof}

In order to pass from~\eqref{eq:PB1} to its stochastic counterpart, so to perform spectral approximations, we endow the parameter space with a probability measure. Specifically, in this work, we model the parameter as a random variable with finite second moment, namely
\begin{align*}
\mu \in \RV \coloneq \left\{ \mu : \Omega \to I \;\middle|\; \E\big[\big(\mu - \E[\mu]\big)^2\big] < +\infty \right\},
\end{align*}
where $(\Omega, \Sigma, \mathbb{P})$ is the underlying probability space. A random variable $\mu: \Omega \to I$ induces a push-forward probability measure $\nu_\mu$ on the Borel $\sigma$-algebra $\mathcal{B}(I)$, defined by $\nu_\mu(B) \coloneq \mathbb{P}(\mu^{-1}(B))$ for any $B \in \mathcal{B}(I)$, which is the probability law of $\mu$. Let $\bar{\mu} \coloneq \E[\mu] = \int_I \mu(x)\,\mathrm{d}\nu_\mu(x)$ be its expectation and $\sigma \coloneq \E[(\mu-\bar{\mu})^2]^{1/2} = \left( \int_I (\mu(x) - \bar{\mu})^2 \,\mathrm{d}\nu_\mu(x) \right)^{1/2}$ its standard deviation.

To work with normalized random variables, we introduce the re-parametrization $\xi \coloneq \frac{\mu-\bar{\mu}}{\sigma}$, supported on $I_\xi$, and let $\nu_\xi$ denote its probability law. Consequently, the formulation in~\eqref{eq:WeakFormulation} can be equivalently re-written as finding $\tilde{\func{u}}\in\mathcal{C}(I_\xi;\R^n)$ such that
\begin{equation}
    \int_{I_\xi} \langle \widetilde{\mathcal{F}}(\tilde{\func{u}})(z), \tilde{\func{v}}(z) \rangle_{\R^n} \,\mathrm{d}\nu_\xi(z) = 0 \quad \forall \tilde{\func{v}} \in \mathcal{C}(I_\xi; \R^n),
    \label{eq:weak_form_probability}
\end{equation}
where the operator $\widetilde{\mathcal{F}}:\mathcal{C}(I_{\xi};\R^n)\to \mathcal{C}(I_{\xi};\R^n)$ is defined as $\widetilde{\mathcal{F}}(\tilde{\func{u}})(z)\coloneq\func{f}(\tilde{\func{u}}(z),\bar{\mu}+\sigma z)$ for all $z\in I_{\xi}$.

To numerically address the resolution of the problem defined by Equation~\eqref{eq:weak_form_probability}, we employ Polynomial Chaos (PC) expansion. This allows us to approximate functions in the space
\begin{align*}
    L^2_{\xi}(I_\xi;\R^n)\coloneq\Big\{\tilde{\func{v}}:I_\xi\to\R^n\ \big|\ \int_{I_\xi} \|\tilde{\func{v}}(z)\|_{\R^n}^2\,\mathrm{d}\nu_\xi(z) <+\infty \Big\},
\end{align*}
endowed with the inner product $\langle\func{u},\tilde{\func{v}}\rangle_{L^2_{\xi}}=\int_{I_\xi} \langle\func{u}(z),\tilde{\func{v}}(z)\rangle_{\R^n}\,\mathrm{d}\nu_\xi(z)$, through a truncated series of orthogonal polynomials w.r.t.\ the law measure of $\xi$, yielding spectral convergence~\cite{xiu2010, xiu2002wiener, xiu2003modeling, cameron1947orthogonal, ghanem1991stochastic}.

In particular, given a continuous function $\tilde{\func{v}}\in \mathcal{C}(I_\xi; \R^n)\subset L^2_{\xi}(I_\xi;\R^n)$, its PC expansion $\tilde{\func{v}}_{\NPC}$ is given by:
\begin{equation}\label{eq:PC_representation}
\tilde{\func{v}}_{\NPC}(\xi)\coloneq\sum_{k=0}^{\NPC} \hat{\vect{v}}_k\,\Phi_k(\xi)=
    \begin{pmatrix} 
    \sum_{k=0}^{\NPC} \hat{v}_{k,1} \Phi_k(\xi) \\
    \vdots \\
    \sum_{k=0}^{\NPC} \hat{v}_{k,n} \Phi_k(\xi) 
    \end{pmatrix}, \quad \hat{\vect{v}}_k \coloneq \frac{\langle \tilde{\func{v}}, \Phi_k \rangle_{L^2_{\xi}}}{\|\Phi_k\|_{L^2_{\xi}}^2}=(\hat{v}_{k,1}, \dots, \hat{v}_{k,n}),
\end{equation}
where the inner product $\langle \tilde{\func{v}}, \Phi_k \rangle_{L^2_{\xi}}$ is intended component-wise. The basis functions $\{\Phi_k\}_{k=0}^{+\infty}$ are polynomials of degree $k$ strictly orthogonal w.r.t.\ the law of $\xi$, satisfying $\int_{I_\xi} \Phi_i(z)\Phi_j(z)\,\mathrm{d}\nu_\xi(z)=\delta_{ij}\|\Phi_i\|_{L^2_\xi}^2$ for all $i,j$.

In this work, we investigate the properties of the solutions to the weak problem in \eqref{eq:weak_form_probability} restricting the functions $\func{u},\func{v}$ to the subspace spanned by the truncated PC expansion.
Specifically, for a fixed integer $\NPC > 0$, we take the solution space to be the finite-dimensional polynomial subspace
$$
\Pi_{\NPC}^n \coloneq \left(\mathrm{span}\{\Phi_0, \dots, \Phi_{\NPC}\}\right)^n \subset L^2_{\xi}(I_{\xi}; \R^n),
$$
and, by a Galerkin projection, we impose that the residual is orthogonal to $\Pi_{\NPC}^n$. 
Thus, the problem is reduced to finding $\tilde{\func{u}}_{\NPC}\in\Pi_{\NPC}^n$ such that
\begin{equation}
\tag{PB2}\label{eq:PB2}
    \int_{I_\xi} \langle \widetilde{\mathcal{F}}(\tilde{\func{u}}_{\NPC})(z), \tilde{\func{v}}(z) \rangle_{\R^n} \,\mathrm{d}\nu_\xi(z) =0,\qquad\forall \tilde{\func{v}}\in\Pi_{\NPC}^n\subset\mathcal{C}(I_{\xi}; \R^n).
\end{equation}
Note that, as $\NPC\to\infty$, the infinite-dimensional weak problem \eqref{eq:weak_form_probability} is recovered.

Finally, problem~\eqref{eq:PB2} corresponds to a system of $n(\NPC+1)$, possibly non-linear, equations. 
Defining the operator $\func{F}_{\NPC}:\R^{n(N_{PC}+1)}\to\R^{n(N_{PC}+1)}$ in the unknown vector $\vect{\hat{u}}$, we obtain the compact block-vector form:
\begin{equation}
\func{F}_{\NPC}(\vect{\hat{u}})\coloneq
\begin{pmatrix}
\langle \widetilde{\mathcal{F}}(\tilde{\func{u}}_{\NPC}), \Phi_0 \rangle_{L^2_{\xi}}\\
\vdots\\
\langle \widetilde{\mathcal{F}}(\tilde{\func{u}}_{\NPC}), \Phi_{\NPC} \rangle_{L^2_{\xi}}
\end{pmatrix}
=
\begin{pmatrix}
\vect{0}\\
\vdots\\
\vect{0}
\end{pmatrix},
\label{eq:GalerkinSystem}
\end{equation}
where the $k$-th block $\langle \widetilde{\mathcal{F}}(\tilde{\func{u}}_{\NPC}), \Phi_k \rangle_{L^2_{\xi}} \in \R^n$ represents the projection of the $n$-dimensional residual onto the $k$-th basis polynomial.

In the following, we will consider uniformly distributed parameters $\mu\sim\mathcal{U}(a,b)$ in $I\coloneqq[a,b]$, so that $\xi$ is uniformly distributed\footnote{The construction can be extended to more general random variables~\cite{xiu2002wiener, xiu2010}, possibly with unbounded support.} on $I_{\xi}\coloneqq[-\sqrt{3},\sqrt{3}]$.
In this setting, the PC orthogonal polynomials $\Phi_k$ are suitably scaled Legendre polynomials on $I_{\xi}$.

The PCE-Galerkin system~\eqref{eq:GalerkinSystem} is highly nonlinear and may admits a large family of solutions. The following remark serves as a motivation for our work, highlighting the need to handle this vast set of algebraic roots and identifying the specific ones we aim to isolate.

\begin{rmk}
Let us consider the case where $n=1$ and $f(u,\mu)$ is a polynomial of degree $d$ in the first variable, i.e.
$$
f(u,\mu)=\sum_{i=0}^{d}\theta_i(\mu)\,u^i,\qquad \theta_i\in\mathcal{C}(I;\R).
$$
We aim to characterize the solutions of the system in Equation~\eqref{eq:GalerkinSystem}, whose $k$-th equation reads as:
\begin{equation*}
    \langle \widetilde{\mathcal{F}}(\tilde{u}_{\NPC}),\Phi_k \rangle_{L^2_{\xi}}=\sum_{i=0}^{d} \sum_{j_1, \dots, j_i = 0}^{\NPC} A^{(k)}_{j_1,\dots,j_i}\,\hat{u}_{j_1}\cdots \,\hat{u}_{j_i}=0,
\end{equation*}
where the coefficients $A^{(k)}_{j_1,\dots,j_i}$ for $k=0, \dots, \NPC$ are defined by
$$
A^{(k)}_{j_1,\dots,j_i}=\int_{I_\xi} \theta_i(\bar{\mu}+\sigma z)\,\Phi_{j_1}(z)\cdots\Phi_{j_i}(z)\,\Phi_k(z)\,\mathrm{d}\nu_\xi(z).
$$

The system consists of $\NPC+1$ polynomial equations, each of these of degree up to $d$ in the unknown $\bm{\hat{u}}$. 
Assuming that equations in~\eqref{eq:GalerkinSystem} define algebraic curves without common components, since each of the equations has at most degree $d$, the system admits at most $d^{\NPC+1}$ real solutions, by the ``real affine'' version of Bézout's Theorem~\cite{Fulton1998, SommeseWampler2005} reported in Appendix~\ref{thm:Bezout}.
Consequently, the number of solutions of the PCE-Galerkin system exceeds the one of the original problem, growing exponentially in $\NPC$.

The normal form for the pitchfork bifurcation belongs to this class of problems:
\begin{align*}
    f(u,\mu) = -u^3+\mu u,
\end{align*}
and we report its solutions and their PC approximations in Figure \ref{fig:oscillating_vs_branch}. 
We observe that the majority of PC solutions oscillates between different branches of the bifurcation diagram. 
However, a smaller set of solutions seems to have a branch-approximating behavior, accurately matching the equilibria.

\begin{figure}[t]
    \centering
    \includegraphics[width=0.48\textwidth]{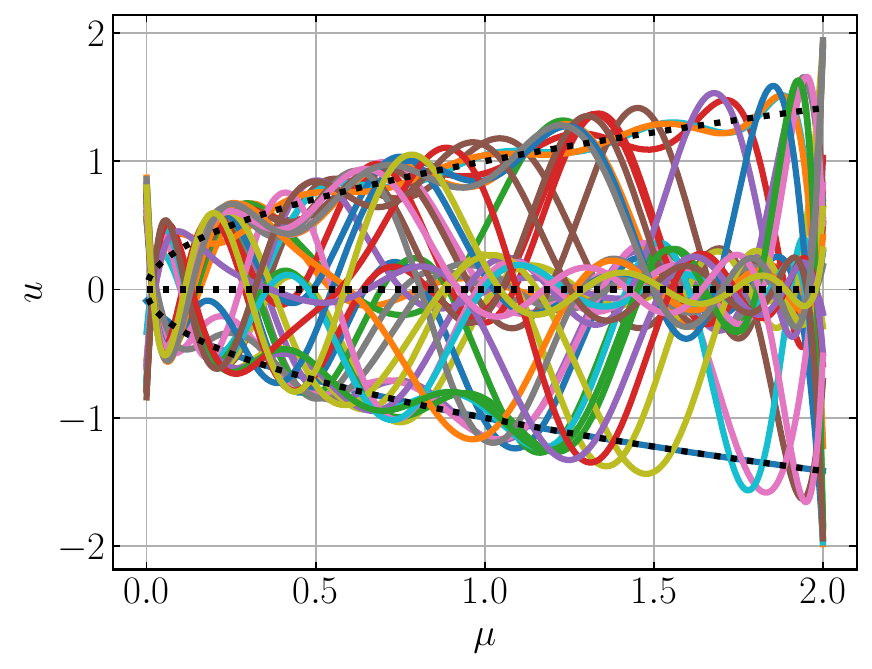}
    \includegraphics[width=0.48\textwidth]{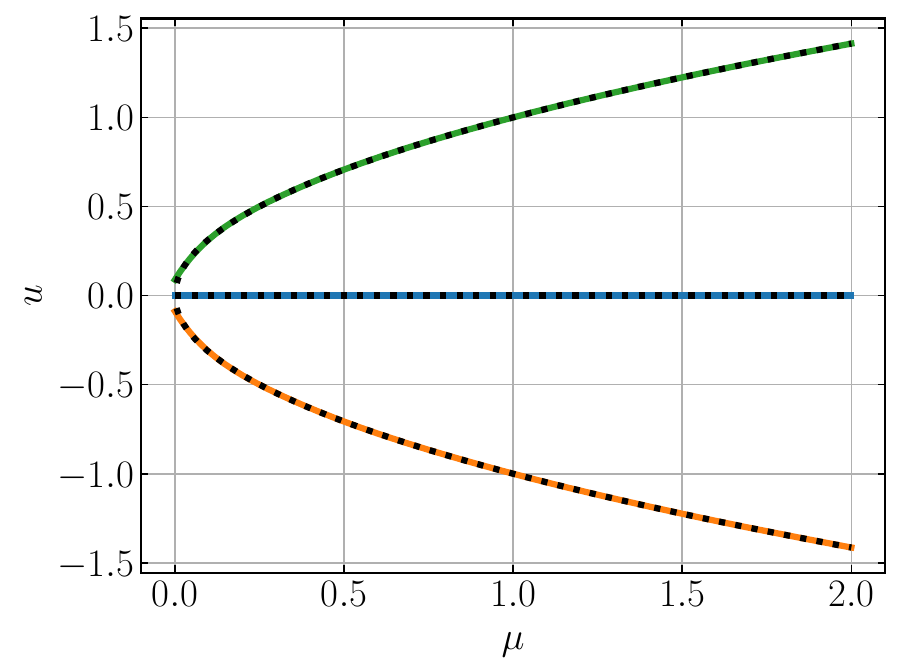}
    \caption{Oscillating and branch-approximating solutions for the pitchfork bifurcation, left and right respectively, for $\NPC=10$, $\bar{\mu}=1.0$, and $\sigma=1.0$.}
    \label{fig:oscillating_vs_branch}
\end{figure}
\end{rmk}

Motivated by the above remark, in the next section we investigate the analytical properties of the solutions to problem \eqref{eq:PB2} which exhibits a \textit{branch--approximating} behavior, proving them to be a very powerful tool.
\section{Theoretical Results}\label{sec:theoretical-results}

In this section, we establish rigorous convergence results and error estimates for the \emph{branch approximating} solutions to problem \eqref{eq:PB2}. 
Let us suppose that there exist $r$ distinct solution branches $\bar{\func{u}}^{(1)},\dots,\bar{\func{u}}^{(r)}\in\mathcal{C}(I; \R^n)$ of~\eqref{eq:PB1} within the domain $I$, and that they are of class $\mathcal{C}^1$ away from bifurcation points. We remark that, while they may lose differentiability at critical parameter values, continuity is typically preserved.

In the following, for the sake of readability, we drop the tilde notation since we perform our analysis exclusively in the re-parametrized setting over $I_\xi$. Therefore, $\bar{\func{u}}^{(1)},\dots,\bar{\func{u}}^{(r)}\in\mathcal{C}(I_{\xi};\R^n)$ and $\mathcal{F}:\mathcal{C}(I_{\xi};\R^n)\to \mathcal{C}(I_{\xi};\R^n)$ will denote the re-parametrized branches and Nemytskii operator, respectively. 
Moreover, denoting by $N$ the degree of the PC expansion, we refer to $\func{u}^{(k)}_{LS,N}$ as the best $N$-degree polynomial approximation in $L^2_{\xi}(I_\xi; \R^n)$ of the $k$-the branch, and to $\func{u}_N$ as a generic solution to~\eqref{eq:PB2}, i.e.
$$
\func{u}_{N}(\xi)=\sum_{j=0}^N\hat{\vect{u}}_{N,j}\,\Phi_j(\xi)\quad\text{s.t. }\int_{I_\xi} \langle \mathcal{F}(\func{u}_N)(z), \func{v}(z) \rangle_{\R^n} \,\mathrm{d}\nu_\xi(z) =0,\ \forall \func{v}\in\Pi_{N}^n\subset\mathcal{C}(I_{\xi}; \R^n).
$$
For the remainder of this section, we focus on a single solution branch of the original problem~\eqref{eq:PB1}, as the analysis can be carried out independently for each branch, and thus we fix
$$
\bar{\func{u}}\coloneq \bar{\func{u}}^{(k)},\qquad \func{u}_{LS,N}\coloneq \func{u}^{(k)}_{LS,N}.
$$
Note that, since in the uniform case the probability law of $\xi$ is equivalent to the Lebesgue measure on $I_\xi$ up to a multiplicative constant, i.e.
$$
\langle \func{u},\func{v}\rangle_{L^2_{\xi}}=\frac{1}{2\sqrt{3}}\int_{I_{\xi}}\langle \func{u}(z),\func{v}(z)\rangle_{\R^n}\mathrm{d}z= \frac{1}{2\sqrt{3}}\langle \func{u},\func{v}\rangle_{L^2},
$$
then the inner product $\langle\cdot,\cdot\rangle_{L^2_{\xi}}$ is equivalent to the standard $L^2(I_\xi; \R^n)$ inner product, which we will use in the following. For simplicity of notation, we may omit the domain and codomain, referring to the spaces as $L^2$ and $L^\infty$. 
Specifically, for a vector-valued function $\func{v} \in L^2(I_\xi; \R^n) \cap L^\infty(I_\xi; \R^n)$, its functional norms are defined through the standard Euclidean norm $\|\cdot\|_{\R^n}$:
$$
\|\func{v}\|_{L^2} \coloneq \left( \int_{I_\xi} \|\func{v}(z)\|_{\R^n}^2 \,\mathrm{d}z \right)^{1/2}, \qquad \|\func{v}\|_{L^\infty} \coloneq \operatorname{ess\,sup}_{z \in I_\xi} \|\func{v}(z)\|_{\R^n}.
$$
Similarly, the $H^s$ Sobolev norms for vector-valued functions are defined by applying the standard scalar $H^s$ norm definitions to the Euclidean norm of the function and its weak derivatives.

\subsection{Consistency}
We start by presenting a consistency result, showing that the (re-parametrized) solution branch $\bar{\func{u}}$ becomes, in a suitable sense, an asymptotic solution of~\eqref{eq:PB2} as $N\to+\infty$. More precisely, we study the behavior of the least-squares approximation residual in the PCE-Galerkin system \eqref{eq:GalerkinSystem} and show that it converges to zero as $N\to+\infty$, with rates consistent with the regularity of the branch (see Theorem~\ref{thm:convergence_ls} in Appendix \ref{appendix}).

\begin{thm}[Consistency]\label{thm:consistency}
Let $\bar{\func{u}} \in H^s(I_\xi; \R^n)$ with $s\ge1$.
Under the local Lipschitz assumptions on $\mathcal{F}$, there exist two constants $C_2, C_{\infty}>0$, independent of $N$, such that the residual of the least-squares approximation $\func{u}_{LS,N}$ satisfies the strong error bounds:
\begin{gather}
\forall\,\delta>0,\quad\|\mathcal{F}(\func{u}_{LS,N})\|_{L^{\infty}} \leq C_{\infty}\|\bar{\func{u}}-\func{u}_{LS,N}\|_{L^{\infty}}=\mathcal{O}(N^{\frac{3}{4}+\delta-s})\label{eq:Linfresidual},\\
\|\mathcal{F}(\func{u}_{LS,N})\|_{L^2} \leq C_2\|\bar{\func{u}}-\func{u}_{LS,N}\|_{L^2}=\mathcal{O}(N^{-s}).\label{eq:L2residual} 
\end{gather}
In particular, $\func{u}_{LS,N}$ asymptotically satisfies the PCE-Galerkin system, i.e.\ for all $l\in\N$ it holds
$$
\lim_{N\to+\infty}\langle \mathcal{F}(\func{u}_{LS,N}),\Phi_l\rangle_{L^2} = \vect{0}.
$$
\end{thm}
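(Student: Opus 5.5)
The plan is to use that the branch is an exact zero of the Nemytskii operator, so that $\mathcal{F}(\bar{\func{u}})=\vect{0}$ pointwise on $I_\xi$. The residual of $\func{u}_{LS,N}$ is then a difference,
$$
\mathcal{F}(\func{u}_{LS,N})(z)=\func{f}(\func{u}_{LS,N}(z),\bar\mu+\sigma z)-\func{f}(\bar{\func{u}}(z),\bar\mu+\sigma z),
$$
which we control through the local Lipschitz property~\eqref{eq:LipCompUnif}. The only non-trivial point is that this property is local: both functions must lie in a common ball of radius $R$ independent of $N$.

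\textbf{Step 1 (uniform ball).} Since $s\ge1$ and $I_\xi$ is a bounded interval, Sobolev embedding gives $\bar{\func{u}}\in\mathcal{C}(I_\xi;\R^n)$ with $\|\bar{\func{u}}\|_{L^\infty}\le M$. By the $L^\infty$ estimate for Legendre least-squares projections (Theorem~\ref{thm:convergence_ls}), $\|\bar{\func{u}}-\func{u}_{LS,N}\|_{L^\infty}=\mathcal{O}(N^{3/4+\delta-s})$. The exponent is negative when $s\ge1$ and $\delta<1/4$, so this error tends to zero. If this rate were unavailable, one could instead use the Lebesgue-constant-type bound $\|P_N v\|_{L^\infty}\lesssim N^{1/2}\|v\|_{H^1}$ together with the $H^s$ approximation rate. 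Hence there is $N_0$ such that $\|\func{u}_{LS,N}\|_{L^\infty}\le M+1=:R$ for all $N\ge N_0$. The finitely many $N<N_0$ are handled by enlarging $R$ to the maximum over them, since each $\func{u}_{LS,N}$ is a polynomial and hence bounded. This $N$-uniform bound is the main obstacle; everything else is routine. I would settle the case $\delta\ge 1/4$ by monotonicity of the $\mathcal{O}$ bound in $\delta$.

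\textbf{Step 2 ($L^\infty$ bound).} With $R$ fixed, \eqref{eq:LipCompUnif} gives, for every $z\in I_\xi$,
$$
\|\mathcal{F}(\func{u}_{LS,N})(z)\|_{\R^n}\le L_R\|\func{u}_{LS,N}(z)-\bar{\func{u}}(z)\|_{\R^n}.
$$
Taking the supremum yields \eqref{eq:Linfresidual} with $C_\infty=L_R$, and the rate follows from Theorem~\ref{thm:convergence_ls}.

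\textbf{Step 3 ($L^2$ bound).} Squaring the same pointwise inequality and integrating over $I_\xi$ gives \eqref{eq:L2residual} with $C_2=L_R$. The rate $\mathcal{O}(N^{-s})$ is the standard $L^2$ spectral estimate for the orthogonal projection of an $H^s$ function, again from Theorem~\ref{thm:convergence_ls}.

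\textbf{Step 4 (asymptotic Galerkin residual).} For fixed $l\in\N$, Cauchy--Schwarz gives
$$
\|\langle\mathcal{F}(\func{u}_{LS,N}),\Phi_l\rangle_{L^2}\|_{\R^n}\le\|\mathcal{F}(\func{u}_{LS,N})\|_{L^2}\|\Phi_l\|_{L^2}.
$$
Since $\|\Phi_l\|_{L^2}$ does not depend on $N$, the right-hand side tends to $0$ by Step 3, which proves the limit.
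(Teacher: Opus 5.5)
\textbf{Gap in Step 1.} Your overall route matches the paper's: a uniform ball, the pointwise Lipschitz bound, square-and-integrate, and Cauchy--Schwarz against a fixed $\Phi_l$. The problem is in Step 1.

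Theorem~\ref{thm:convergence_ls} contains only the $L^2$ rate $\mathcal{O}(N^{-s})$ and the smooth/analytic case. It gives no $L^\infty$ estimate. Yet the bound $\|\bar{\func{u}}-\func{u}_{LS,N}\|_{L^\infty}=\mathcal{O}(N^{3/4+\delta-s})$ is the real analytic content of \eqref{eq:Linfresidual}. It is also what supplies the $N$-uniform ball that you yourself identify as the main obstacle. As written, you take the exponent $3/4$ from the statement rather than deriving it.

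\textbf{What is missing.} You need the $H^1$ error of the $L^2$-orthogonal projection, $\|\bar{\func{u}}-\func{u}_{LS,N}\|_{H^1}\le C_1N^{3/2-s}\|\bar{\func{u}}\|_{H^s}$. Half an order is lost here because $\mathcal{P}_N$ is not $H^1$-orthogonal. You must then combine this with the $L^2$ bound to reach $L^\infty$. The paper does this with Gagliardo--Nirenberg interpolation in $H^k$, $k=\tfrac12+\varepsilon$, followed by the embedding $H^k\hookrightarrow L^\infty$, which gives $N^{3k/2-s}=N^{3/4+3\varepsilon/2-s}$. A shorter way is the one-dimensional inequality $\|e\|_{L^\infty}^2\le |I_\xi|^{-1}\|e\|_{L^2}^2+2\|e\|_{L^2}\|e'\|_{L^2}$. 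It follows from $\|e(x)\|^2-\|e(y)\|^2=2\int_y^x\langle e,e'\rangle$ by averaging in $y$, and yields $N^{3/4-s}$ directly.

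\textbf{Your fallback does not close the gap.} Combining $\|\mathcal{P}_N v\|_{L^\infty}\lesssim N^{1/2}\|v\|_{H^1}$ with an $H^1$ approximation rate $N^{1-s}$ leaves $\|\bar{\func{u}}-\func{u}_{LS,N}\|_{L^\infty}\lesssim N^{3/2-s}$. This is unbounded for $1\le s<3/2$, which is part of the range the theorem claims. An $L^\infty$ Lebesgue-constant argument (Lebesgue constant $\sim N^{1/2}$ together with Jackson's $N^{1/2-s}$) would give boundedness and hence the ball, but only the weaker rate $N^{1-s}$, not the stated one.

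Once the $L^\infty$ rate is properly supplied, Steps 2--4 go through. Your explicit treatment of the finitely many $N<N_0$, and your observation that $\|\Phi_l\|_{L^2}$ is independent of $N$, are cleaner than the corresponding remarks in the paper.
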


\begin{proof}
As a first step, we prove that the sequence of projections $\{\func{u}_{LS,N}\}_{N\in\mathbb{N}}$ converges to $\bar{\func{u}}$ uniformly on the interval $I_\xi$. To do so, we rely on standard spectral and polynomial approximation theory in Sobolev spaces~\cite{canuto2006spectral, shen2011spectral}. The truncation errors for the $L^2$-orthogonal projection, when evaluated in the $L^2$ and $H^1$ norms, can be bounded as
\begin{gather*}
\|\bar{\func{u}}-\func{u}_{LS,N}\|_{L^2}\leq C_0N^{-s}\|\bar{\func{u}}\|_{H^s},\\
\|\bar{\func{u}}-\func{u}_{LS,N}\|_{H^1}\leq C_1N^{\frac{3}{2}-s}\|\bar{\func{u}}\|_{H^s},
\end{gather*}
for some positive constants $C_0, C_1$ independent of $N$. Notice that the $H^1$ estimate exhibits a loss of half an order of convergence, which is due to the fact that the projection is orthogonal in $L^2$ and not in $H^1$.

Next, we establish uniform convergence by bounding the error in $L^\infty(I_\xi; \R^n)$. Given that the $H^1$ error may not provide a sufficiently tight bound for the supremum norm, we appeal to an intermediate regularity. Since the parameter domain $I_\xi$ is one-dimensional, the Sobolev embedding~\cite{adams2003sobolev} $H^k(I_\xi; \R^n) \hookrightarrow L^\infty(I_\xi; \R^n)$ holds for any fractional order $k > 1/2$, regardless of the state space dimension $n$. Consequently, we employ the Gagliardo--Nirenberg interpolation inequality~\cite{adams2003sobolev, bergh1976interpolation, tartar2007sobolev, brenner2008mathematical, brezis2018gagliardo} for $k \in (1/2, 1)$, which yields
\begin{equation*}
    \| \bar{\func{u}} - \func{u}_{LS,N} \|_{H^k} \leq C_k \| \bar{\func{u}} - \func{u}_{LS,N} \|_{H^1}^k \| \bar{\func{u}} - \func{u}_{LS,N} \|_{L^2}^{1-k}.
\end{equation*}
Substituting the previously established polynomial error bounds into this interpolation inequality, we obtain
$$
\|\bar{\func{u}}-\func{u}_{LS,N}\|_{H^k}\leq C_k\left(C_1 N^{\frac{3}{2}-s}\right)^k\left(C_0 N^{-s}\right)^{1-k}\|\bar{\func{u}}\|_{H^s}=\widehat{C}N^{\frac{3}{2}k-s}\|\bar{\func{u}}\|_{H^s},
$$
where the new constant $\widehat{C}$ is the product of all the constants independent of $N$.

As mentioned before, in order to guarantee the uniform convergence of $\func{u}_{LS,N}$ to $\bar{\func{u}}$, we must have $k>1/2$. At the same time, for the approximation error in $H^k$ to vanish asymptotically as $N\to+\infty$, the exponent of $N$ must be strictly negative. This imposes the constraint $\frac{3}{2}k-s<0$, which can be rewritten as $k<\frac{2}{3}s$.

Therefore, both the Sobolev embedding and the spectral convergence conditions are simultaneously satisfied if there exists an index $k$ such that
$$
\frac{1}{2}<k<\frac{2}{3}s.
$$
This open interval is non-empty if and only if $s>\frac{3}{4}$, and since our initial hypothesis requires $s\ge1$, we can choose a valid index $k=1/2+\varepsilon$, for any arbitrarily small $\varepsilon>0$, within this range. This means that the exponent becomes $\frac{3}{2}k-s=\frac{3}{4}+\frac{3}{2}\varepsilon-s$, and then
$$
\|\bar{\func{u}}-\func{u}_{LS,N}\|_{L^\infty}=\mathcal{O}\left(N^{\frac{3}{4}-s+\frac{3}{2}\varepsilon}\right),
$$
which clearly implies uniform convergence when $s\ge1$. Moreover, for any fixed $\delta>0$, choosing $\varepsilon=\frac{2}{3}\delta$ yields
$$
\|\bar{\func{u}}-\func{u}_{LS,N}\|_{L^\infty}=\mathcal{O}\left(N^{\frac{3}{4}+\delta-s}\right),
$$
ensuring that the sequence of approximations $\{\func{u}_{LS,N}\}_{N\in\mathbb{N}}$ is uniformly bounded in $L^\infty(I_\xi; \R^n)$ for $\delta<\frac{1}{4}$. 
Hence, there exists a radius $R>0$, independent of $N$, such that both $\bar{\func{u}}(z)$ and $\func{u}_{LS,N}(z)$ lie within the closed ball $\bar{B}_R(\vect{0}) \subset \R^n$ for all $z \in I_\xi$ and for all $N \in \mathbb{N}$.
Since $\func{f}(\func{u},\mu)$ is locally Lipschitz continuous in its first argument, uniformly with respect to the parameter $\mu$, there exists a local Lipschitz constant $L_R>0$ (see Remark~\ref{prop:LocalLipschitzNemytskii}) such that
$$
\|\mathcal{F}(\func{v}_1)-\mathcal{F}(\func{v}_2)\|_{L^\infty}\leq L_R\|\func{v}_1-\func{v}_2\|_{L^\infty}, \qquad \forall\,\func{v}_1,\func{v}_2\; \text{s.t.}\; \|\func{v}_1\|_{L^\infty}, \|\func{v}_2\|_{L^\infty}\le R.
$$
Applying this property to the functions $\bar{\func{u}}, \func{u}_{LS,N}$, and recalling that $\mathcal{F}(\bar{\func{u}})\equiv\func{0}$, we finally obtain the first bound of the theorem, setting $C_\infty = L_R$, as
$$
\|\mathcal{F}(\func{u}_{LS,N})\|_{L^\infty}\leq L_R\|\bar{\func{u}}-\func{u}_{LS,N}\|_{L^\infty}=\mathcal{O}(N^{\frac{3}{4}+\delta-s}).
$$

Similarly, we square both sides of the pointwise Lipschitz inequality~\eqref{eq:LipCompUnif} and integrate over $I_\xi$ with respect to the probability law of $\xi$, and obtain
$$
\int_{I_\xi}\|\mathcal{F}(\func{u}_{LS,N})(z)\|_{\R^n}^2\,\mathrm{d}z\leq L_R^2\int_{I_\xi}\|\bar{\func{u}}(z)-\func{u}_{LS,N}(z)\|_{\R^n}^2\,\mathrm{d}z<+\infty.
$$
Taking the square root of this integral expression yields the corresponding bound in the $L^2$ norm
$$
\|\mathcal{F}(\func{u}_{LS,N})\|_{L^2}\leq L_R\|\bar{\func{u}}-\func{u}_{LS,N}\|_{L^2}=\mathcal{O}(N^{-s}),
$$
which establishes the second inequality of the theorem with $C_2=L_R$. 

Finally, we address the weak convergence of the residual. Using the Cauchy--Schwarz inequality for vector-valued integrals, the Euclidean norm of the weak projection along any scalar basis function $\Phi_l$ is bounded by the $L^2$ norms
$$
\|\langle\mathcal{F}(\func{u}_{LS,N}),\Phi_l\rangle_{L^2}\|_{\R^n}\leq\|\mathcal{F}(\func{u}_{LS,N})\|_{L^2}\|\Phi_l\|_{L^2}.
$$
Using the $L^2$ bound just proved, and since the Legendre polynomials satisfy $\|\Phi_l\|_{L^2}\sim l^{-1/2}$, the asymptotic result follows as
$$
\lim_{N\to+\infty}\langle\mathcal{F}(\func{u}_{LS,N}),\Phi_l\rangle_{L^2}=\vect{0}\qquad\forall l\in\mathbb{N}.
$$
\end{proof}

\begin{rmk}\label{rmk:superconvergence}
While Theorem~\ref{thm:consistency} requires the regularity assumption $\bar{\func{u}} \in H^s(I_\xi; \R^n)$ with $s \ge 1$ to exploit the Sobolev embedding $H^k(I_\xi; \R^n) \hookrightarrow L^\infty(I_\xi; \R^n)$, empirical evidence often shows uniform convergence even for functions with lower fractional regularity ($s<1$). 

This phenomenon is theoretically justified when the lack of regularity is due to specific algebraic singularities. Recent results in approximation theory demonstrate that functional bounds such as those just developed overestimate the error for functions with localized fractional smoothness~\cite{wang2021how}. 

For example, if $n=1$ and the exact solution exhibits an interior singularity of fractional order, such as
$\bar{u}(x)=|x-x_0|^\alpha$ with $\alpha>0$, the $L^\infty$-norm of the Legendre projection error actually decays at the optimal rate of $\mathcal{O}(N^{-\alpha})$. Furthermore, if the fractional singularity is located at the endpoints of the domain, e.g., $\bar{u}(x)=(1\pm x)^\alpha$ in the interval $I=[-1,1]$, the uniform error may decay even faster, at a rate of $\mathcal{O}(N^{-2\alpha})$. 

Consequently, the $L^2$-orthogonal projection can converge uniformly at a rate matching the best possible polynomial approximation. This explains the empirically observed stability and weak convergence of the residual, despite the exact solution failing the strict $s \ge 1$ Sobolev condition required for the interpolation bounds.
\end{rmk}

\subsection{Convergence}
Motivated by the previous consistency result, we now address the existence of PCE-Galerkin solutions that approximate the branch. Since, for sufficiently large $N$, the least-squares approximation $\func{u}_{LS,N}$ makes the Galerkin residual arbitrarily small, it is natural to seek a sequence of $\func{u}_N$ such that the PCE-Galerkin system $\func{F}_N(\func{u}_N)=\vect{0}\ \forall N$, which converges to $\func{u}_{LS,N}$ for $N\to+\infty$, and hence to $\bar{\func{u}}$.

Assuming $\func{f}$ is differentiable with respect to its first argument, we introduce the Fr\'echet derivative of the Nemytskii operator as the operator $T_{\func{u}}\coloneq D\mathcal{F}(\func{u}): L^2(I_\xi; \R^n) \to L^2(I_\xi; \R^n)$ acting as a linear matrix multiplication operator:
$$
T_{\func{u}}(\func{v})(z) \coloneq D\mathcal{F}(\func{u})[\func{v}](z)=D_{\vect{u}}\func{f}(\func{u}(z), \bar{\mu}+\sigma z) \func{v}(z), \quad \forall z \in I_\xi.
$$
Then, suppose that the Jacobian matrix evaluated along the branch is uniformly positive definite\footnote{In the case of $n=1$, a continuous non-vanishing derivative maintains a constant sign, ensuring this property. For $n \ge 2$, uniform positive definiteness is required to preserve the coercivity of the operator through the Galerkin projection.}, so that there exists $\lambda_{\min}>0$ such that
\begin{equation}
    \langle D_{\vect{u}}\func{f}(\bar{\func{u}}(z), \bar{\mu}+\sigma z) \vect{w}, \vect{w} \rangle_{\R^n} \geq \lambda_{\min} \|\vect{w}\|_{\R^n}^2, \quad \forall z \in I_\xi, \;\forall \vect{w} \in \R^n.
    \label{eq:uniform_positive_definiteness}
\end{equation}
Note that all invertibility and convergence proofs in this section hold identically if the Jacobian is uniformly negative definite, i.e., 
\begin{equation}
    \langle D_{\vect{u}}\func{f}(\bar{\func{u}}(z), \bar{\mu}+\sigma z) \vect{w}, \vect{w} \rangle_{\R^n} \leq -\lambda_{\min} \|\vect{w}\|_{\R^n}^2 \quad \forall z \in I_\xi, \;\forall \vect{w} \in \R^n.
    \label{eq:uniform_negative_definiteness}
\end{equation}
By the Cauchy--Schwarz inequality, equation~\eqref{eq:uniform_positive_definiteness} ensures that the operator is uniformly bounded away from zero. It follows from standard operator theory~\cite{rudin1991functional} that $T_{\bar{\func{u}}}$ is invertible with the following estimates:
This condition strictly excludes bifurcation points inside the interval\footnote{Since $\mathcal{F}(\bar{\func{u}})=\vect{0}$, this positive definiteness ensures the Fréchet derivative $D\mathcal{F}(\bar{\func{u}})=T_{\bar{\func{u}}}$ is strictly monotone and therefore continuously invertible, satisfying the key hypothesis of the Implicit Function Theorem.}. 
$$
\|T_{\bar{\func{u}}}\func{v}\|_{L^2} \geq \lambda_{\min} \|\func{v}\|_{L^2} \quad \forall \func{v} \in L^2(I_\xi; \R^n), \qquad \|T_{\bar{\func{u}}}^{-1}\|_{\mathcal{L}(L^2, L^2)} \leq \frac{1}{\lambda_{\min}}.
$$
To prove the convergence, we aim to apply the Newton--Kantorovich Theorem (see Appendix \ref{thm:NewtonKantorovich}) to a family of solutions $\func{u}_N$ beyond a fixed value $N_{\text{conv}}$. Thus, we now seek to exploit the invertibility of $T_{\bar{\func{u}}}$ to obtain the required uniform bound on the norm of the Jacobian matrices of the PCE-Galerkin system. 
Therefore, recalling the formulation of~\eqref{eq:PB2} given in Equation~\eqref{eq:GalerkinSystem}, we connect it to the properties of the infinite-dimensional operator $T_{\bar{\func{u}}}$, and we cast it as a root-finding problem in $\R^{n(N+1)}$.

To formalize this connection, we introduce two bridging operators. The first is the natural isomorphism $\mathcal{I}_n: \R^{n(N+1)} \to \Pi_N^n$, which maps a vector of coefficients to its corresponding polynomial in the finite-dimensional functional space $\Pi_N^n \subset L^2(I_\xi; \R^n)$ as
$$
    \mathcal{I}_n(\hat{\vect{u}}) \coloneq \sum_{j=0}^N \hat{\vect{u}}_j \Phi_j(\xi).
$$
The second is the standard $L^2$-orthogonal projection $\mathcal{P}_N: L^2(I_\xi; \R^n) \to \Pi_N^n$, which maps any square-integrable function to its best polynomial approximation in $\Pi_N^n$
$$
    \mathcal{P}_N(\func{v}) \coloneq \sum_{i=0}^N \frac{\langle \func{v}, \Phi_i \rangle_{L^2}}{\|\Phi_i\|_{L^2}^2} \Phi_i(\xi).
$$
We define, with a slight abuse of notation, a scaled equivalent Galerkin operator $\func{F}_N:\R^{n(N+1)}\to\R^{n(N+1)}$ as
$$
    \func{F}_N(\hat{\vect{u}})_i \coloneq \frac{\langle \mathcal{F}(\mathcal{I}_n(\hat{\vect{u}})), \Phi_i \rangle_{L^2}}{\|\Phi_i\|_{L^2}^2}, \quad i=0,\dots,N,
$$
which does not change the roots of the system in Equation~\eqref{eq:GalerkinSystem}, but it ensures that $\func{F}_N(\hat{\vect{u}})$ exactly matches the coefficients of its orthogonal projection on $\{\Phi_i\}_{i=0}^N$. In fact, denoting by $\mathcal{I}_n^{-1}: \Pi_N^n \to \R^{n(N+1)}$ the inverse mapping that extracts the basis coefficients of a given polynomial, we can write $\func{F}_N$ in operator form as
$$
    \func{F}_N(\hat{\vect{u}}) = \mathcal{I}_n^{-1} \left( \mathcal{P}_N(\mathcal{F}(\mathcal{I}_n(\hat{\vect{u}}))) \right).
$$
Consequently, its Jacobian matrix $\vect{JF}_N(\hat{\vect{u}}) \in \R^{n(N+1)\times n(N+1)}$ is the projection of the continuous Fr\'echet derivative $T_{\func{u}_N} \coloneq D\mathcal{F}(\func{u}_N)$ translated into the coefficient space
\begin{equation*}
    \vect{JF}_N(\hat{\vect{u}}) = \mathcal{I}_n^{-1} \circ \mathcal{P}_N \circ T_{\mathcal{I}_n(\hat{\vect{u}})} \circ \mathcal{I}_n,\quad\text{so that}\quad\left(\vect{JF}_N(\hat{\vect{u}})\right)_{ij}=\frac{\langle D_{\vect{u}}\func{f}(\mathcal{I}_n(\hat{\vect{u}}),\cdot)\Phi_j,\Phi_i\rangle_{L^2}}{\|\Phi_i\|_{L^2}^2},
\end{equation*}
where each $\left(\vect{JF}_N(\hat{\vect{u}})\right)_{ij}$ represents an $n \times n$ block of the full Jacobian matrix.

We equip $\R^{n(N+1)}$ with a weighted inner product and its induced norm
\begin{equation*}
    \langle \vect{a}, \vect{b} \rangle_* \coloneq \sum_{i=0}^N \langle \vect{a}_i, \vect{b}_i \rangle_{\R^n} \|\Phi_i\|_{L^2}^2, \qquad |\vect{b}|_* \coloneq \sqrt{\langle \vect{b}, \vect{b} \rangle_*},\qquad\forall\vect{a},\vect{b}\in\R^{n(N+1)},
\end{equation*}
that, by Parseval's identity, is isometric to the $L^2$ norm in the function space, i.e. $|\vect{b}|_* = \|\mathcal{I}_n(\vect{b})\|_{L^2}$ for all $\vect{b} \in \R^{n(N+1)}$. Moreover, for any $\func{w} \in L^2(I_\xi; \R^n)$ and $\vect{b} \in \R^{n(N+1)}$, this inner product provides a crucial bridge to the functional space:
\begin{equation}\label{eq:bridge_inner}
    \langle \mathcal{I}_n^{-1}(\mathcal{P}_N(\func{w})), \vect{b} \rangle_* = \sum_{i=0}^N \left\langle \frac{\langle \func{w}, \Phi_i \rangle_{L^2}}{\|\Phi_i\|_{L^2}^2}, \vect{b}_i \right\rangle_{\R^n} \|\Phi_i\|_{L^2}^2 = \left\langle \func{w}, \sum_{i=0}^N \vect{b}_i \Phi_i \right\rangle_{L^2} = \langle \func{w}, \mathcal{I}_n(\vect{b}) \rangle_{L^2}.
\end{equation}
Thanks to this isometric norm framework, we can now prove the bounds on the Jacobian matrix required by the Newton--Kantorovich Theorem.

\begin{prop} 
Suppose that $\bar{\func{u}} \in H^s(I_\xi; \R^n)$ for $s \geq 1$ and that the Jacobian matrix is uniformly positive definite along the branch, i.e., there exists $\lambda_{\min} > 0$ such that for all $z \in I_\xi$ and $\vect{w} \in \R^n$: 
$$
\langle D_{\vect{u}}\func{f}(\bar{\func{u}}(z),\bar{\mu}+\sigma z)\vect{w}, \vect{w} \rangle_{\R^n} \geq \lambda_{\min} \|\vect{w}\|_{\R^n}^2.
$$
Assume that the point wise derivative $D\mathcal{F}(\func{u})$ is locally Lipschitz continuous with respect to $\func{u}$. Then, there exist an integer $N_0 > 0$ and an $L^\infty$-neighborhood $U$ of $\bar{\func{u}}$ such that for any $N \geq N_0$, the Jacobian evaluated at the least-squares approximation coefficients $\vect{a} = \mathcal{I}_n^{-1}(\func{u}_{LS,N})$ is invertible, and there exists a constant $K_{\bar{\func{u}}}(N) = \mathcal{O}(N)$ satisfying the following Lipschitz continuity bound in the $|\cdot|_*$ induced matrix norm
\begin{gather*}
|(\vect{JF}_N(\vect{a}))^{-1}|_* \leq \frac{2}{\lambda_{\min}}, \\
|\vect{JF}_N(\hat{\vect{u}}_1) - \vect{JF}_N(\hat{\vect{u}}_2)|_* \leq K_{\bar{\func{u}}}(N)|\hat{\vect{u}}_1 - \hat{\vect{u}}_2|_*, \qquad \forall\hat{\vect{u}}_1, \hat{\vect{u}}_2 \in \R^{n(N+1)}\; \textnormal{ s.t. }\; \mathcal{I}_n(\hat{\vect{u}}_1), \mathcal{I}_n(\hat{\vect{u}}_2) \in U.
\end{gather*}
\end{prop}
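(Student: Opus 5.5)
The plan is to work entirely in function space via the isometry $|\vect{b}|_*=\|\mathcal{I}_n(\vect{b})\|_{L^2}$ and the bridge identity~\eqref{eq:bridge_inner}. These show that $\vect{JF}_N(\hat{\vect{u}})$ is unitarily equivalent to the compressed operator $\mathcal{P}_N T_{\mathcal{I}_n(\hat{\vect{u}})}|_{\Pi_N^n}$ acting on $(\Pi_N^n,\|\cdot\|_{L^2})$. Both claimed bounds therefore reduce to statements about Galerkin compressions of multiplication operators.

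\textbf{Invertibility.} First I would use Theorem~\ref{thm:consistency}: $\|\bar{\func{u}}-\func{u}_{LS,N}\|_{L^\infty}=\mathcal{O}(N^{3/4+\delta-s})\to0$. Hence there is a radius $R$ with all values of $\bar{\func{u}}$ and $\func{u}_{LS,N}$ in $\bar B_R(\vect{0})$. Let $U$ be an $L^\infty$-ball of radius $\rho$ around $\bar{\func{u}}$. The local Lipschitz continuity of $D_{\vect{u}}\func{f}$, uniform in $\mu$, with constant $M_R$ on the enlarged ball, gives
\[
\sup_z\|D_{\vect{u}}\func{f}(\func{u}_{LS,N}(z),\cdot)-D_{\vect{u}}\func{f}(\bar{\func{u}}(z),\cdot)\|\le M_R\|\func{u}_{LS,N}-\bar{\func{u}}\|_{L^\infty}.
\]
Choose $N_0$ so that this is at most $\lambda_{\min}/2$ for $N\ge N_0$. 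Then, pointwise, $\langle D_{\vect{u}}\func{f}(\func{u}_{LS,N}(z),\cdot)\vect{w},\vect{w}\rangle\ge\frac{\lambda_{\min}}{2}\|\vect{w}\|^2$. For $\vect{b}\in\R^{n(N+1)}$ and $\func{v}=\mathcal{I}_n(\vect{b})$, identity~\eqref{eq:bridge_inner} gives
\[
\langle\vect{JF}_N(\vect{a})\vect{b},\vect{b}\rangle_*=\langle T_{\func{u}_{LS,N}}\func{v},\func{v}\rangle_{L^2}\ge\tfrac{\lambda_{\min}}{2}\|\func{v}\|_{L^2}^2=\tfrac{\lambda_{\min}}{2}|\vect{b}|_*^2.
\]
The projection disappears because $\func{v}\in\Pi_N^n$. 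By Cauchy--Schwarz, $|\vect{JF}_N(\vect{a})\vect{b}|_*\ge\frac{\lambda_{\min}}{2}|\vect{b}|_*$, so the matrix is injective, hence invertible, with $|(\vect{JF}_N(\vect{a}))^{-1}|_*\le 2/\lambda_{\min}$. This is coercivity transferring through the Galerkin compression.

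\textbf{Lipschitz bound.} Let $\func{u}_i=\mathcal{I}_n(\hat{\vect{u}}_i)\in U$ and $\func{v}=\mathcal{I}_n(\vect{b})$. Since $\|\mathcal{P}_N\|\le1$,
\[
|(\vect{JF}_N(\hat{\vect{u}}_1)-\vect{JF}_N(\hat{\vect{u}}_2))\vect{b}|_*\le\|(D_{\vect{u}}\func{f}(\func{u}_1,\cdot)-D_{\vect{u}}\func{f}(\func{u}_2,\cdot))\func{v}\|_{L^2}\le M_R\|\func{u}_1-\func{u}_2\|_{L^\infty}\|\func{v}\|_{L^2}.
\]
This is the main obstacle: the Lipschitz estimate naturally produces an $L^\infty$ norm, while the statement requires the $|\cdot|_*$ norm, i.e.\ $L^2$. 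I would close the gap with the polynomial inverse (Nikolskii) inequality on $\Pi_N^n$ for the uniform/Legendre weight on $I_\xi$:
\[
\|\func{p}\|_{L^\infty}\le C(N+1)\|\func{p}\|_{L^2}.
\]
It follows from $\sum_{j\le N}\Phi_j(z)^2/\|\Phi_j\|_{L^2}^2\lesssim(N+1)^2$, using that Legendre polynomials attain their maximum at the endpoints, together with Cauchy--Schwarz on the coefficients. Applying it to $\func{u}_1-\func{u}_2\in\Pi_N^n$ gives $K_{\bar{\func{u}}}(N)=C M_R(N+1)=\mathcal{O}(N)$.

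The only remaining care is that $U$ must be chosen in $L^\infty$ so that $R$ and $M_R$ are independent of $N$. Any finer bookkeeping of the constant in the inverse inequality is routine.
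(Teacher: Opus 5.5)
Your proposal is correct and follows essentially the same route as the paper. Both arguments get the coercivity of $\vect{JF}_N(\vect{a})$ from the bridge identity~\eqref{eq:bridge_inner} plus Cauchy--Schwarz, and get the Lipschitz bound from the contraction of $\mathcal{P}_N$, the multiplication-operator estimate, and Nikolskii's inequality, which trades $L^\infty$ for $L^2$ at the cost of a factor $N$. Your only additions are quantifying the Jacobian perturbation at $\func{u}_{LS,N}$ through a Lipschitz constant instead of mere continuity, and sketching the inverse inequality via the Legendre bound $\sum_{j\le N}\Phi_j(z)^2/\|\Phi_j\|_{L^2}^2\lesssim (N+1)^2$; both are sound.
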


\begin{proof}
Since in the proof of Theorem~\ref{thm:consistency} we show that $\func{u}_{LS,N}\to\bar{\func{u}}$ uniformly as $N\to\infty$, and by the continuity of the Jacobian matrix, we can choose $N_0$ large enough such that $\func{u}_{LS,N}$ falls into a sufficiently small neighborhood of $\bar{\func{u}}$. This guarantees that the strict positive definiteness is locally preserved, yielding
$$
\langle D_{\vect{u}}\func{f}(\func{u}_{LS,N}(z),\bar{\mu}+\sigma z)\vect{w}, \vect{w} \rangle_{\R^n} \geq \frac{\lambda_{\min}}{2} \|\vect{w}\|_{\R^n}^2
$$
uniformly on $I_\xi$ for all $N \geq N_0$ and all $\vect{w} \in \R^n$.

To bound the inverse of $\vect{JF}_N(\vect{a})$, we evaluate its coercivity using the $*$-inner product. Introducing $\vect{b}\in\R^{n(N+1)}$ and $\func{v}_N=\mathcal{I}_n(\vect{b})$, by using the isometric identity~\eqref{eq:bridge_inner}, we have
\begin{align*}
    \langle \vect{JF}_N(\vect{a})\vect{b}, \vect{b} \rangle_* &=\langle \mathcal{I}_n^{-1}\left(\mathcal{P}_N(T_{\func{u}_{LS,N}} \func{v}_N)\right), \vect{b} \rangle_* \\
    &=\langle T_{\func{u}_{LS,N}} \func{v}_N, \mathcal{I}_n(\vect{b}) \rangle_{L^2} \\
    &=\int_{I_\xi} \langle D_{\vect{u}}\func{f}(\func{u}_{LS,N}(z),\bar{\mu}+\sigma z)\func{v}_N(z), \func{v}_N(z) \rangle_{\R^n} \, \mathrm{d}z \\ 
    &\geq \frac{\lambda_{\min}}{2} \|\func{v}_N\|_{L^2}^2 = \frac{\lambda_{\min}}{2} |\vect{b}|_*^2.
\end{align*}
By the Cauchy--Schwarz inequality it holds $|\langle \vect{JF}_N(\vect{a})\vect{b}, \vect{b} \rangle_*| \leq |\vect{JF}_N(\vect{a})\vect{b}|_* |\vect{b}|_*$. Thus, dividing by $|\vect{b}|_*$, we conclude that $|\vect{JF}_N(\vect{a})\vect{b}|_* \geq \lambda_{\min}|\vect{b}|_*/2$, which implies strict invertibility and the bound $|(\vect{JF}_N(\vect{a}))^{-1}|_* \leq 2/\lambda_{\min}$.

Next, we address the Lipschitz continuity. Let $\hat{\vect{u}}_1, \hat{\vect{u}}_2 \in \R^{n(N+1)}$ be such that $\mathcal{I}_n(\hat{\vect{u}}_1), \mathcal{I}_n(\hat{\vect{u}}_2) \in U$ and let $\vect{b} \in \R^{n(N+1)}$ with $|\vect{b}|_* = 1$. By setting $\func{v}_N = \mathcal{I}_n(\vect{b})$ we have $\|\func{v}_N\|_{L^2}=1$. Using the fact that $\mathcal{I}_n^{-1}$ is an isometry from $\Pi_N^n \subset L^2(I_\xi; \R^n)$ to $\R^{n(N+1)}$ and $\mathcal{P}_N$ is an $L^2$-orthogonal projection (hence a contraction, $\|\mathcal{P}_N\|_{\mathcal{L}(L^2, L^2)} = 1$), and applying the standard bound for multiplication operators $\|A\func{v}\|_{L^2} \leq \|A\|_{L^\infty} \|\func{v}\|_{L^2}$, we find:
\begin{align*}
    | (\vect{JF}_N(\hat{\vect{u}}_1) - \vect{JF}_N(\hat{\vect{u}}_2))\vect{b} |_* &= \left|\mathcal{I}_n^{-1}\left(\mathcal{P}_N \left( (T_{\mathcal{I}_n(\hat{\vect{u}}_1)} - T_{\mathcal{I}_n(\hat{\vect{u}}_2)}) \func{v}_N \right)\right)\right|_* \\
    &= \left\| \mathcal{P}_N \left( (T_{\mathcal{I}_n(\hat{\vect{u}}_1)} - T_{\mathcal{I}_n(\hat{\vect{u}}_2)}) \func{v}_N \right) \right\|_{L^2} \\
    &\leq \| (T_{\mathcal{I}_n(\hat{\vect{u}}_1)} - T_{\mathcal{I}_n(\hat{\vect{u}}_2)}) (\func{v}_N) \|_{L^2} \\
    &\leq \| D_{\vect{u}}\func{f}(\mathcal{I}_n(\hat{\vect{u}}_1),\cdot) - D_{\vect{u}}\func{f}(\mathcal{I}_n(\hat{\vect{u}}_2),\cdot) \|_{L^\infty} \|\func{v}_N\|_{L^2} \\
    &\leq L_U \| \mathcal{I}_n(\hat{\vect{u}}_1) - \mathcal{I}_n(\hat{\vect{u}}_2) \|_{L^\infty},
\end{align*}
where $L_U$ is the local Lipschitz constant of the Jacobian matrix $D_{\vect{u}}\func{f}(\func{u}(z),\bar{\mu}+\sigma z)$ in $U$ with respect to the state variable. 

Finally, we invoke component-wise the standard spectral inverse inequality for polynomials, often referred to as Nikolskii's inequality \cite{canuto2006spectral,devore1993constructive,milovanovic1994topics}, which states there exists a constant $C > 0$ such that
$$
\|\func{p}_N\|_{L^\infty} \leq C N \|\func{p}_N\|_{L^2} \quad \forall\, \func{p}_N \in \Pi_N^n.
$$
Applying this to the difference $\mathcal{I}_n(\hat{\vect{u}}_1) - \mathcal{I}_n(\hat{\vect{u}}_2)$, and setting $K_{\bar{\func{u}}}(N) = L_U C N = \mathcal{O}(N)$, we complete the proof by obtaining
\begin{equation*}
    | (\vect{JF}_N(\hat{\vect{u}}_1) - \vect{JF}_N(\hat{\vect{u}}_2))\vect{b} |_* \leq L_U C N \| \mathcal{I}_n(\hat{\vect{u}}_1) - \mathcal{I}_n(\hat{\vect{u}}_2) \|_{L^2} = L_U C N |\hat{\vect{u}}_1 - \hat{\vect{u}}_2|_*.
\end{equation*}
\end{proof}

With all these preliminary results, we are ready to show that for sufficiently large $N$, we can find a solution of the PCE-Galerkin system that is close enough in the $L^2$ norm to the branch $\bar{\func{u}}$.

\begin{thm}[Convergence]\label{thm:existence_of_PC_ls_sol}
If $\bar{\func{u}}\in H^s(I_\xi; \R^n)$ for $s>1$, and satisfies the uniform positive definiteness condition
$$
\langle D_{\vect{u}}\func{f}(\bar{\func{u}}(z),\bar{\mu}+\sigma z)\func{w}, \func{w} \rangle_{\R^n} \geq \lambda_{\min} \|\func{w}\|_{\R^n}^2 > 0, \quad \forall z \in I_\xi, \;\forall \func{w} \in \R^n\setminus \{\vect{0}\},
$$
then there exists $N_{\text{conv}} \in \mathbb{N}$ such that, for any $N\geq N_{\text{conv}}$, there exists a unique solution $\func{u}_N$ to the Galerkin projection system $\func{F}_N(\hat{\vect{u}})=\vect{0}$ in a neighborhood of the least-squares approximation $\func{u}_{LS,N}$. Furthermore, this solution satisfies:
$$
\|\bar{\func{u}}-\func{u}_{N}\|_{L^2}=E(N)
$$
where $E(N)=\mathcal{O}(N^{-s})$ is the rate of convergence of the residual $\|\mathcal{F}(\func{u}_{LS,N})\|_{L^2}$.
\end{thm}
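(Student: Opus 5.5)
We apply the Newton--Kantorovich Theorem (Appendix~\ref{thm:NewtonKantorovich}) to $\func{F}_N$ on $(\R^{n(N+1)},|\cdot|_*)$, starting from the least-squares coefficients $\vect{a}=\mathcal{I}_n^{-1}(\func{u}_{LS,N})$. The previous Proposition gives, for $N\ge N_0$, the inverse bound $\beta\coloneq|(\vect{JF}_N(\vect{a}))^{-1}|_*\le 2/\lambda_{\min}$ and the Lipschitz constant $K_{\bar{\func{u}}}(N)=\mathcal{O}(N)$ for the Jacobian on the set $\{\hat{\vect{u}}:\mathcal{I}_n(\hat{\vect{u}})\in U\}$. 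The remaining ingredient is the size of the first Newton step.

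By the isometry and the contractivity of $\mathcal{P}_N$,
\[
|\func{F}_N(\vect{a})|_*=\|\mathcal{P}_N\mathcal{F}(\func{u}_{LS,N})\|_{L^2}\le\|\mathcal{F}(\func{u}_{LS,N})\|_{L^2}\le C_2\|\bar{\func{u}}-\func{u}_{LS,N}\|_{L^2}=E(N)=\mathcal{O}(N^{-s})
\]
by Theorem~\ref{thm:consistency}. Hence $\eta\coloneq|(\vect{JF}_N(\vect{a}))^{-1}\func{F}_N(\vect{a})|_*\le 2E(N)/\lambda_{\min}$. The Kantorovich quantity then satisfies
\[
h_N\coloneq\beta K_{\bar{\func{u}}}(N)\eta\le\frac{4}{\lambda_{\min}^2}\,L_UCN\,E(N)=\mathcal{O}(N^{1-s}).
\]
This tends to zero exactly because $s>1$, which is where the strict inequality in the hypothesis is used. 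Choose $N_{\text{conv}}\ge N_0$ with $h_N\le 1/2$ for all $N\ge N_{\text{conv}}$. Newton--Kantorovich then yields a zero $\hat{\vect{u}}_N$ of $\func{F}_N$ in the ball of radius $t^*=\frac{1-\sqrt{1-2h_N}}{\beta K}\le 2\eta$ around $\vect{a}$. It also yields uniqueness in the larger ball of radius $\frac{1+\sqrt{1-2h_N}}{\beta K}$, which is the asserted uniqueness in a neighborhood of $\func{u}_{LS,N}$.

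The main obstacle is checking that the Kantorovich ball lies inside the region where the Jacobian Lipschitz bound is valid. That region is an $L^\infty$-neighborhood $U$, whereas the ball is measured in $L^2$. To handle this, use Nikolskii's inequality: every polynomial $\func{p}$ in the ball satisfies
\[
\|\func{p}-\func{u}_{LS,N}\|_{L^\infty}\le CN\cdot 2\eta=\mathcal{O}(N^{1-s})\to0.
\]
Combined with $\|\func{u}_{LS,N}-\bar{\func{u}}\|_{L^\infty}=\mathcal{O}(N^{3/4+\delta-s})\to0$ from Theorem~\ref{thm:consistency}, the whole ball lies in $U$ for $N$ large, again using $s>1$. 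One should use the uniqueness radius $\mathcal{O}(1/(\beta K))=\mathcal{O}(1/N)$ in $L^2$ rather than something larger, since only the former transfers to an $\mathcal{O}(1)$ control in $L^\infty$. If needed, shrink the uniqueness ball so that it also stays in $U$.

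Finally, by the triangle inequality and Parseval,
\[
\|\bar{\func{u}}-\func{u}_N\|_{L^2}\le\|\bar{\func{u}}-\func{u}_{LS,N}\|_{L^2}+|\hat{\vect{u}}_N-\vect{a}|_*\le E(N)+\frac{4}{\lambda_{\min}}E(N).
\]
This is $\mathcal{O}(E(N))=\mathcal{O}(N^{-s})$, which is the claimed rate, understood up to a constant independent of $N$.
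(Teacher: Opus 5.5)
Your proposal is correct and follows the paper's own argument. Both apply Newton--Kantorovich from the least-squares coefficients with $\beta_N\le 2/\lambda_{\min}$, $K_N=\mathcal{O}(N)$ and $\eta_N\le 2E(N)/\lambda_{\min}$, so that $h_N=\mathcal{O}(N^{1-s})$, and then finish with the triangle inequality. Your Nikolskii-based check that the Kantorovich ball stays inside the $L^\infty$-neighborhood $U$ where the Jacobian Lipschitz bound holds, and your explicit bound $r^*\le 2\eta$ in place of the asymptotic $r^*\sim\eta$, are tightenings the paper's proof leaves implicit rather than a different route.
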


\begin{proof}
We apply the Newton--Kantorovich theorem (see Appendix~\ref{thm:NewtonKantorovich}) to the Galerkin operator $\func{F}_N$ starting from the initial guess given by the least-squares approximation coefficients $\vect{a} = \mathcal{I}_n^{-1}(\func{u}_{LS,N})$. 

From the previous propositions, for sufficiently large $N$, we can bound the quantities required by the Newton--Kantorovich theorem in the induced norm, i.e.\ $\beta_N$ the norm of the inverse Jacobian, $K_N$ the Lipschitz constant of the Jacobian, and $\eta_N$ the norm of the initial Newton step, as
\begin{align*}
    \beta_N &= |(\vect{JF}_N(\vect{a}))^{-1}|_* \leq \frac{2}{\lambda_{\min}}, \\
    K_N &= K_{\bar{\func{u}}}(N) = \mathcal{O}(N), \\
    \eta_N &=|(\vect{JF}_N(\vect{a}))^{-1} \func{F}_N(\vect{a})|_* \leq \beta_N|\func{F}_N(\vect{a})|_* \leq \frac{2}{\lambda_{\min}} E(N),
\end{align*}
where $E(N) = \|\mathcal{F}(\func{u}_{LS,N})\|_{L^2} = \mathcal{O}(N^{-s})$. 
We must verify that the Kantorovich condition $h_N = \beta_N K_N \eta_N \leq \frac{1}{2}$ holds for $N\geq N_{\text{conv}}$. Since $\bar{\func{u}}\in H^s(I_\xi; \R^n)$ with $s>1$, in the evaluation of $h_N$
$$
h_N = \beta_N K_N \eta_N \leq \left(\frac{2}{\lambda_{\min}}\right)^2 \mathcal{O}(N) \mathcal{O}(N^{-s}) = \mathcal{O}(N^{1-s}),
$$
the exponent $1-s$ is strictly negative, implying that $h_N \to 0$ as $N \to \infty$. Consequently, there exists an $N_{\text{conv}}$ such that $h_N \leq \frac{1}{2}$ for all $N \geq N_{\text{conv}}$. By the Newton--Kantorovich theorem, the sequence of Newton iterates converges to a unique solution $\func{u}_N$ of the PCE-Galerkin system.

To estimate the distance from the initial guess, as $N \to \infty$, $h_N \to 0$ and $\frac{1-\sqrt{1-2h_N}}{h_N}\to 1$; exploiting the Newton--Kantorovich bound for the convergence radius $r_N^*$, we obtain
$$
\|\func{u}_{LS,N}-\func{u}_N\|_{L^2} \leq r_N^* = \frac{1-\sqrt{1-2h_N}}{h_N} \eta_N  \sim \frac{2}{\lambda_{\min}} E(N).
$$
Finally, applying the triangle inequality, we conclude
$$
\|\bar{\func{u}}-\func{u}_N\|_{L^2}\leq \|\bar{\func{u}}-\func{u}_{LS,N}\|_{L^2}+\|\func{u}_{LS,N}-\func{u}_N\|_{L^2} \sim \left(1+\frac{2}{\lambda_{\min}}\right)E(N)=\mathcal{O}(N^{-s}).
$$
\end{proof}

In numerical implementations, solving the non-linear PCE-Galerkin system requires an iterative root-finding algorithm, such as Newton's method, which relies on a sufficiently accurate initial guess. A standard computational strategy is to use the computed solution $\func{u}_N$ at degree $N$ as the initial guess for the augmented system of degree $N+1$. The following corollary provides the theoretical foundation for this approach by proving that the distance between successive discrete approximations decays asymptotically. This guarantees that, as the polynomial degree increases, the lower-degree solution naturally falls within the convergence neighborhood of the higher-degree system, ensuring stable convergence toward the exact branch $\bar{\func{u}}$.

\begin{cor}\label{cor:successive_sol}
Let $\func{u}_N$ and $\func{u}_{N+1}$ be the unique local solutions of the PCE-Galerkin system~\eqref{eq:PB2} of degree $N$ and $N+1$, respectively, converging to the branch $\bar{\func{u}}$. Then, the distance between successive approximations satisfies
$$
\|\func{u}_N-\func{u}_{N+1}\|_{L^2}=\mathcal{O}(N^{-s}).
$$
\end{cor}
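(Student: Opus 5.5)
The plan is a triangle inequality through the exact branch $\bar{\func{u}}$, using Theorem~\ref{thm:existence_of_PC_ls_sol} at two consecutive degrees. For $N\ge N_{\text{conv}}$ both $N$ and $N+1$ are admissible degrees. Hence $\func{u}_N$ and $\func{u}_{N+1}$ are the unique Galerkin solutions in the Newton--Kantorovich neighborhoods of $\func{u}_{LS,N}$ and $\func{u}_{LS,N+1}$, respectively. This identifies them with the local solutions named in the statement, and it is the only place where uniqueness is used. We then write
$$
\|\func{u}_N-\func{u}_{N+1}\|_{L^2}\le \|\func{u}_N-\bar{\func{u}}\|_{L^2}+\|\bar{\func{u}}-\func{u}_{N+1}\|_{L^2}.
$$

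Each term is controlled by the convergence estimate of Theorem~\ref{thm:existence_of_PC_ls_sol}:
$$
\|\bar{\func{u}}-\func{u}_M\|_{L^2}\le\Bigl(1+\tfrac{2}{\lambda_{\min}}\Bigr)E(M)(1+o(1)),\qquad E(M)\le C M^{-s}\|\bar{\func{u}}\|_{H^s},
$$
for $M\in\{N,N+1\}$. The constants come from the spectral $L^2$ projection bound and the Lipschitz constant $L_R$ in the proof of Theorem~\ref{thm:consistency}. They are independent of $M$, so one constant works for both degrees. Since $(N+1)^{-s}\le N^{-s}$, the sum is bounded by $2C'N^{-s}$, which gives $\mathcal{O}(N^{-s})$.

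The main point to check is the uniformity of the hidden constants in the $\sim$ statement of Theorem~\ref{thm:existence_of_PC_ls_sol}. The factor $\frac{1-\sqrt{1-2h_M}}{h_M}$ is bounded by $2$ whenever $h_M\le 1/2$, so for all $M\ge N_{\text{conv}}$ we get the non-asymptotic bound $\|\func{u}_{LS,M}-\func{u}_M\|_{L^2}\le \frac{4}{\lambda_{\min}}E(M)$. This keeps the argument rigorous without relying on the asymptotic equivalence. I would state this bound explicitly before applying the triangle inequality.

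As a remark on how this supports degree-continuation, I would also note the following. Since $\|\func{u}_N-\func{u}_{LS,N+1}\|_{L^2}=\mathcal{O}(N^{-s})$ while the Kantorovich uniqueness radius at degree $N+1$ is of order $1/(\beta K)\sim N^{-1}$, the embedded $\func{u}_N\in\Pi_{N+1}^n$ eventually lies inside the convergence ball of the degree-$(N+1)$ system because $s>1$. This is not needed for the stated bound.
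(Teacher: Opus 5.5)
Your argument is correct. It differs from the paper's only in where the triangle inequality is anchored.

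The paper inserts both least-squares projections:
$\|\func{u}_N-\func{u}_{N+1}\|_{L^2}\le\|\func{u}_N-\func{u}_{LS,N}\|_{L^2}+\|\func{u}_{LS,N}-\func{u}_{LS,N+1}\|_{L^2}+\|\func{u}_{LS,N+1}-\func{u}_{N+1}\|_{L^2}$.
It bounds the outer legs by the Newton--Kantorovich radii at degrees $N$ and $N+1$. It handles the middle leg by nestedness of orthogonal projections: $\func{u}_{LS,N+1}-\func{u}_{LS,N}=\hat{\func{u}}_{LS,N+1}\Phi_{N+1}$ is a single tail mode, so its norm is at most $\|\bar{\func{u}}-\func{u}_{LS,N}\|_{L^2}$.

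You instead pivot on $\bar{\func{u}}$ and quote the conclusion of Theorem~\ref{thm:existence_of_PC_ls_sol} twice. Unfolded, your chain is $\func{u}_N\to\func{u}_{LS,N}\to\bar{\func{u}}\to\func{u}_{LS,N+1}\to\func{u}_{N+1}$. You therefore pay one extra projection error, $\|\bar{\func{u}}-\func{u}_{LS,N+1}\|_{L^2}$, in the constant. In exchange you need no orthogonality argument, and the corollary becomes an immediate consequence of the convergence theorem.

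Your bound $\frac{1-\sqrt{1-2h}}{h}\le 2$ for $h\le\frac12$ gives $\|\func{u}_{LS,M}-\func{u}_M\|_{L^2}\le\frac{4}{\lambda_{\min}}E(M)$ for every $M\ge N_{\text{conv}}$. This is a real improvement over the paper's ``$\sim$'', since it makes the constant uniform in $M$.

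If you state things non-asymptotically, though, drop the factor $(1+\frac{2}{\lambda_{\min}})E(M)$. It tacitly uses $\|\bar{\func{u}}-\func{u}_{LS,M}\|_{L^2}\le E(M)$; the paper takes the same step when it writes $\|\bar{\func{u}}-\func{u}_{LS,N}\|_{L^2}\le E(N)$. Consistency only gives the reverse inequality, $E(M)\le L_R\|\bar{\func{u}}-\func{u}_{LS,M}\|_{L^2}$. Bound that leg directly by $C_0M^{-s}\|\bar{\func{u}}\|_{H^s}$ instead; the rate $\mathcal{O}(N^{-s})$ is unaffected.

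In your closing remark, being within the uniqueness radius $r^{**}$ (of order at least $N^{-1}$) of $\func{u}_{LS,N+1}$ does not by itself place $[\hat{\vect{u}}_N,\vect{0}]$ in Newton's basin. To get that, reapply Newton--Kantorovich at this point. Only the top residual block $\langle\mathcal{F}(\func{u}_N),\Phi_{N+1}\rangle_{L^2}$ survives there, so $\eta=\mathcal{O}(N^{-s})$ and $h=\mathcal{O}(N^{1-s})$.
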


\begin{proof}
Using the $L^2$-isometry of the coefficient norm and the triangle inequality, we have
\begin{align*}
    \|\func{u}_N-\func{u}_{N+1}\|_{L^2} &\leq \|\func{u}_N-\func{u}_{LS,N}\|_{L^2}+\|\func{u}_{LS,N}-\func{u}_{LS,N+1}\|_{L^2}+\|\func{u}_{LS,N+1}-\func{u}_{N+1}\|_{L^2}.
\end{align*}
The first and last term have asymptotically the behavior respectively of $E(N)$ and $E(N+1)$. Since $\func{u}_{LS,N}$ is the $L^2$-orthogonal projection of $\bar{\func{u}}$, the middle term is bounded by the truncation error
$$
\|\func{u}_{LS,N}-\func{u}_{LS,N+1}\|_{L^2}^2 \leq \sum_{i=N+1}^{+\infty} \|\hat{\func{u}}_{LS,i} \Phi_i\|_{L^2}^2 = \|\bar{\func{u}}-\func{u}_{LS,N}\|_{L^2}^2 \leq E(N)^2.
$$
Since $E(N+1)<E(N)$, combining these results and calling $\bar{C}=1+\frac{2}{\lambda_{\min}}$, gives:
$$
\|\func{u}_N-\func{u}_{N+1}\|_{L^2}\sim (2\bar{C} + 1) E(N).
$$
\end{proof}

\begin{rmk}
Observe that, when a bifurcation occurs for a specific value of the parameter, the Jacobian matrix becomes singular, i.e., $D_{\vect{u}}\func{f}(\bar{\func{u}}(z),\bar{\mu}+\sigma z)$ has a zero eigenvalue for some $z \in I_\xi$. In such cases, if the critical parameter falls inside the stochastic domain, the uniform positive definiteness is lost ($\lambda_{\min}=0$), and the theoretical guarantees provided by the Newton--Kantorovich framework break down. However, we empirically observe that PCE-Galerkin approximations may still converge to the branches even near bifurcations, possibly with degraded rates due to the loss of invertibility of the Jacobian.

Furthermore, away from bifurcation points, the solution branches are typically not just $H^s(I_\xi; \R^n)$, but analytic in the parameter $\xi$. In these analytic regions, the convergence of the Polynomial Chaos expansion becomes super-polynomial. Specifically, the error is expected to decay exponentially as $\mathcal{O}(e^{-N})$ for some constant $c > 0$, as established in spectral approximation theory for analytic functions \cite{canuto2006spectral, trefethen2013approximation}.  This explains the extremely high precision achieved by the Galerkin method in stable regions of the bifurcation diagram.
\end{rmk}

\subsection{Well-posedness}
As a final step, we show how the weak formulation~\eqref{eq:PB2} inherits fundamental properties of the original problem~\eqref{eq:PB1} under suitable assumptions, thereby ensuring the well-posedness of the PCE-Galerkin approximations, regarding the non existence and uniqueness of solutions.

In a multi-dimensional setting, the absence of exact continuous roots for $\func{f}(\func{u}, \bar{\mu}+\sigma z) = \vect{0}$ does not automatically preclude the PCE-Galerkin problem from having spurious solutions. However, by imposing an orientational constraint, we can bypass this problem: if the vector field maintains a uniformly positive projection along a specific constant direction $\vect{c} \in \R^n$, the PCE-Galerkin system inherits the global non-existence property. Intuitively, since constant vectors always belong to the polynomial test space $\Pi_{N}^n$, projecting the Galerkin residual along $\vect{c}$ yields the integral of a strictly positive quantity.
This result is formalized in the following theorem.

\begin{thm}[Non-existence]
Suppose that, in~\eqref{eq:PB1}, for every $z\in I_\xi$ the equation
$$
\func{f}(\vect{u},\bar{\mu}+\sigma z)=\vect{0}
$$
has no solution $\vect{u}\in\R^n$. 
Moreover, assume there exists a constant vector $\vect{c} \in \R^n \setminus \{\vect{0}\}$ such that the vector field is oriented along $\vect{c}$, i.e., $\langle \func{f}(\vect{u},\bar{\mu}+\sigma z), \vect{c} \rangle_{\R^n} > 0$ for all $\vect{u} \in \R^n$ and $z \in I_\xi$. 
Then, for every $N\in\mathbb{N}$, the projected problem~\eqref{eq:PB2} admits no solution.
\end{thm}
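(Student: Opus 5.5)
We argue by contradiction, following the intuition stated just before the theorem. Suppose that for some $N\in\mathbb{N}$ there exists $\func{u}_N\in\Pi_N^n$ solving~\eqref{eq:PB2}, so that
$$
\int_{I_\xi}\langle \mathcal{F}(\func{u}_N)(z),\func{v}(z)\rangle_{\R^n}\,\mathrm{d}\nu_\xi(z)=0\qquad\forall\,\func{v}\in\Pi_N^n .
$$
The key observation is that the constant function $\func{v}(z)\equiv\vect{c}$ is an admissible test function for every $N\ge 0$. Indeed, $\Phi_0$ is a nonzero constant polynomial, so $\vect{c}=(\vect{c}/\Phi_0)\Phi_0\in\Pi_N^n$.

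Testing with this choice gives
$$
0=\int_{I_\xi}\big\langle \func{f}(\func{u}_N(z),\bar{\mu}+\sigma z),\vect{c}\big\rangle_{\R^n}\,\mathrm{d}\nu_\xi(z).
$$
We then study the integrand $g(z)\coloneq\langle \func{f}(\func{u}_N(z),\bar{\mu}+\sigma z),\vect{c}\rangle_{\R^n}$. Since $\func{u}_N$ is a polynomial, it is continuous on $I_\xi$. Since $\func{f}$ is continuous, $g$ is therefore continuous on the compact interval $I_\xi$. By the orientation hypothesis, applied pointwise with $\vect{u}=\func{u}_N(z)$, we have $g(z)>0$ for every $z\in I_\xi$.

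The step requiring care is upgrading this pointwise positivity to a strictly positive integral. Because $g$ is continuous and positive on a compact set, it attains a minimum $m\coloneq\min_{I_\xi}g>0$. Moreover, $\nu_\xi$ is a probability measure on $I_\xi$; in the uniform case it is normalized Lebesgue measure on $[-\sqrt3,\sqrt3]$. Hence
$$
\int_{I_\xi}g\,\mathrm{d}\nu_\xi\ge m>0,
$$
which contradicts the identity above. This holds for every $N$, so the projected problem~\eqref{eq:PB2} has no solution. The argument also works for a general law $\nu_\xi$, since only $\nu_\xi(I_\xi)=1$ and continuity of $g$ are used.

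Finally, the first hypothesis, non-existence of pointwise roots, is implied by the orientation condition: if $\func{f}=\vect{0}$ at some point, the inner product with $\vect{c}$ would vanish there. So that hypothesis is not actually needed in the argument. The only genuine ingredients are that constants lie in the test space and the compactness argument; neither poses a real obstacle.
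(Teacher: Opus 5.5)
Your proof is correct and follows the paper's argument: test \eqref{eq:PB2} with the constant function $\vect{c}\in\Pi_N^n$, which is admissible for every $N$, and contradict the strict positivity of the resulting integral. Your continuity-and-compactness step (a positive minimum of $g$ on $I_\xi$) and your remark that the orientation condition already implies the first, non-existence hypothesis make explicit details the paper leaves implicit.
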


\begin{proof}
Consider the PCE-Galerkin problem~\eqref{eq:PB2}. Let $\Pi_N^n$ be the approximation space and assume, by contradiction, that there exists a solution $\func{u}_N \in \Pi_N^n$. Since constant vector functions belong to $\Pi_N^n$, we can choose $\func{v}_N \equiv \vect{c}$ as a test function. Because the projection along $\vect{c}$ is strictly positive everywhere by assumption, this yields
$$
\int_{I_\xi} \langle \mathcal{F}(\func{u}_N)(z), \vect{c} \rangle_{\R^n} \, \mathrm{d}z > 0,
$$
which directly contradicts the definition of the Galerkin projection $\int_{I_\xi} \langle \mathcal{F}(\func{u}_N)(\xi), \func{v}_N(\xi) \rangle_{\R^n} \,\dxi = 0$. This shows that~\eqref{eq:PB2} admits no solution for any $N \in \mathbb{N}$.
\end{proof}

\begin{rmk}
If $n=1$, the assumption $\langle \func{f}(\vect{u},\bar{\mu}+\sigma z), \vect{c} \rangle_{\R^n} > 0$ for all $\vect{u} \in \R^n$ and $z \in I_\xi$ is automatically satisfied: by the Intermediate Value Theorem, a continuous scalar function that never vanishes must maintain a constant sign, allowing one to simply choose $c=1$ or $c=-1$. 
However, for $n \ge 2$, the space $\R^n \setminus \{\vect{0}\}$ is connected. 
Without an orientational constraint, the vector field could rotate around the origin as the parameter $z$ varies, causing its integral over $I_\xi$ to average to exactly $0$. 
In such cases, the PCE-Galerkin system could incorrectly admit solutions despite the original continuous problem having none.
\end{rmk}

\begin{rmk}
The assumption that the approximation spaces $\Pi_N^n$ contain the constant functions is essential for the non-existence result to hold. 
If this property is lacking, the PCE-Galerkin system may admit solutions despite the absence of solutions to the original problem~\eqref{eq:PB1}.

To illustrate this, consider the case where $\func{f}(\vect{u}, \bar{\mu} + \sigma z) \equiv \vect{c}$ for all $(\vect{u}, z) \in \R^n \times I_\xi$, with $\vect{c} \neq \vect{0}$. 
Clearly,~\eqref{eq:PB1} has no solution. 
However, suppose we choose an approximation space $\mathbf{V}_N$ that excludes constants, such as $\mathbf{V}_N \coloneq (\mathrm{span}\{\Phi_1, \dots, \Phi_N\})^n$, where the basis functions $\{\Phi_i\}_{i=1}^N$ are $L^2_{\nu_\xi}$-orthogonal to constants, i.e., $\int_{I_\xi} \Phi_i(\xi) \,\dxi = 0$ for all $i \in \{1, \dots, N\}$. 
For any $\func{u}_N \in \mathbf{V}_N$ and any test function $\func{v}_N \in \mathbf{V}_N$, we have
$$
\int_{I_\xi} \langle \mathcal{F}(\func{u}_N)(z), \func{v}_N(z) \rangle_{\R^n} \, \mathrm{d}z = \int_{I_\xi} \langle \vect{c}, \func{v}_N(z) \rangle_{\R^n} \, \mathrm{d}z= 0.
$$
Consequently, every element $\func{u}_N \in \mathbf{V}_N$ becomes a solution to the projected problem~\eqref{eq:PB2}. This happens because the information preventing existence, e.g.\ the non--zero mean of the operator, is lost when the problem is projected onto a subspace orthogonal to the constant functions.
\end{rmk}

We finally establish a uniqueness result for the PCE-Galerkin approximation. Standard uniqueness proofs for non-linear Galerkin methods typically rely on global strict monotonicity, a condition requiring
$$
\langle \mathcal{F}(\func{w})(z)-\mathcal{F}(\func{u})(z), \func{w}(z)-\func{u}(z) \rangle_{\R^n}> 0 \qquad \forall z\in I_\xi, \quad \forall\, \func{w}(z) \neq \func{u}(z).
$$
However, in a multi-dimensional setting, this requirement is often violated by polynomial dynamical systems featuring complex rotations or chaotic regimes, such as the Lorenz equations. To bypass this problem and establish a more widely applicable result, we replace the global constraint with a combination of local and asymptotic geometric properties. 

Locally, the absence of bifurcation points within $I_\xi$ implies a uniformly positive definite Jacobian, yielding strict monotonicity in a neighborhood of the exact solution branch. Far from the root, we introduce two complementary conditions: a uniform radial coercivity bound, which ensures the vector field is strictly directed outwards (or inwards), and a relative growth bound. This latter constraint bounds the total magnitude of the field relative to its radial projection, preventing unbounded tangential rotations that could lead to spurious solutions. Together, these properties guarantee that the Galerkin formulation preserves the uniqueness of the continuous problem.

\begin{thm}[Uniqueness]\label{thm:uniqueness}
Suppose that~\eqref{eq:PB1} admits a unique solution in the interval $I$, whose re-parametrization is $\bar{\func{u}} \in H^s(I_\xi; \R^n)$, with $s>1$, and that $\func{f}$ is continuously differentiable with respect to its first argument. 
Moreover, assume there exists a constant symmetric positive-definite matrix $P \in \R^{n \times n}$ defining the weighted inner product $\langle \vect{v}, \vect{w} \rangle_P \coloneq \vect{v}^\top P \vect{w}$, such that the following uniform positive definiteness condition holds at the root:
$$
\langle D_{\vect{u}}\func{f}(\bar{\func{u}}(z), \bar{\mu}+\sigma z)\vect{w}, \vect{w} \rangle_P \geq \lambda_{\min} \|\vect{w}\|_P^2 > 0, \quad \forall z \in I_\xi, \;\forall \vect{w} \in \R^n\setminus \{\vect{0}\}.
$$
and that for any $\rho > 0$, there exist constants $m_\rho > 0$ and $C_\rho > 0$ such that, for all $\vect{u} \in \R^n\text{ s.t.\ } \|\vect{u} - \bar{\func{u}}(z)\|_P \geq \rho$, the vector field satisfies the uniform radial coercivity condition 
\begin{align*}
\langle \func{f}(\vect{u},\bar{\mu}+\sigma z), \vect{u} - \bar{\func{u}}(z) \rangle_P \geq m_\rho \|\vect{u} - \bar{\func{u}}(z)\|_P\quad\forall z \in I_\xi,
\end{align*}
and the growth bound\footnote{The result holds also if $\langle D_{\vect{u}}\func{f}(\bar{\func{u}}(z), \bar{\mu}+\sigma z)\vect{w}, \vect{w} \rangle_P \leq -\lambda_{\min} \|\vect{w}\|_P^2 < 0$, by reversing the inequalities simultaneously: $\langle \func{f}(\vect{u},\bar{\mu}+\sigma z), \vect{u} - \bar{\func{u}}(z) \rangle_P \leq -m_\rho \|\vect{u} - \bar{\func{u}}(z)\|_P$, and $\|\func{f}(\vect{u},\bar{\mu}+\sigma z)\|_P \leq -C_\rho \langle \func{f}(\vect{u},\bar{\mu}+\sigma z), \vect{u} - \bar{\func{u}}(z) \rangle_P$.} 
\begin{align*}
    \|\func{f}(\vect{u},\bar{\mu}+\sigma z)\|_P \leq C_\rho \langle \func{f}(\vect{u},\bar{\mu}+\sigma z), \vect{u} - \bar{\func{u}}(z) \rangle_P\quad\forall z \in I_\xi.
\end{align*}
Then, there exists an integer $\bar{N}$ such that, for all $N \geq \bar{N}$, the PCE-Galerkin problem~\eqref{eq:PB2} admits exactly one solution $\func{u}_N\in\Pi_N^n$.
\end{thm}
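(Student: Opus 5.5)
Existence comes from Theorem~\ref{thm:existence_of_PC_ls_sol}, carried out in the $P$-weighted setting. Uniqueness is a global statement: any Galerkin solution $\func{u}_N$ must eventually lie in the Newton--Kantorovich uniqueness ball around $\func{u}_{LS,N}$. I would prove it in three steps.

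\emph{Step 1 (existence).} Replace the Euclidean inner product on $\R^n$ by $\langle\cdot,\cdot\rangle_P$ throughout. The Galerkin condition is unchanged when tested against $P\func{v}$, because $\Pi_N^n$ is invariant under the constant matrix $P$. The coercivity computation for $\vect{JF}_N(\vect{a})$ and the Newton--Kantorovich argument then go through verbatim with norms equivalent up to the eigenvalues of $P$. For $N\ge N_{\text{conv}}$ this gives a solution $\func{u}_N^\star$ with $\|\func{u}_N^\star-\bar{\func{u}}\|_{L^2}=\mathcal{O}(N^{-s})$, unique in a ball around $\func{u}_{LS,N}$.

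\emph{Step 2 (a priori localization of any solution).} Let $\func{u}_N$ be any solution and put $\func{e}=\func{u}_N-\bar{\func{u}}$. I cannot test with $\func{e}$, since it is not a polynomial, so I test with $\func{v}=\func{u}_N-\func{u}_{LS,N}=\func{e}+(\bar{\func{u}}-\func{u}_{LS,N})\in\Pi_N^n$. This yields
\[
0=\int_{I_\xi}\langle \mathcal{F}(\func{u}_N),\func{e}\rangle_P\,\mathrm{d}z+\int_{I_\xi}\langle \mathcal{F}(\func{u}_N),\bar{\func{u}}-\func{u}_{LS,N}\rangle_P\,\mathrm{d}z .
\]
Fix $\rho>0$ and split $I_\xi$ into $A=\{\|\func{e}\|_P\ge\rho\}$ and its complement.
\begin{itemize}
\item On $A$, radial coercivity gives an integrand of at least $m_\rho\|\func{e}\|_P$. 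The growth bound gives $\|\mathcal{F}(\func{u}_N)\|_P\le C_\rho\langle\mathcal{F}(\func{u}_N),\func{e}\rangle_P$, so the second integrand on $A$ is at most $C_\rho\langle\mathcal{F}(\func{u}_N),\func{e}\rangle_P\,\|\bar{\func{u}}-\func{u}_{LS,N}\|_{L^\infty}$. Theorem~\ref{thm:consistency} says this $L^\infty$ error tends to $0$, so it is absorbed into the first term. This is exactly where the growth bound is needed to control unbounded tangential parts.
\item On $A^c$, $\func{u}_N$ stays within $\rho$ of the branch. Taking $\rho$ small, continuity of $D_{\vect{u}}\func{f}$ and a mean value argument give $\langle\mathcal{F}(\func{u}_N),\func{e}\rangle_P\ge\frac{\lambda_{\min}}{2}\|\func{e}\|_P^2$ (local strict monotonicity, using $\mathcal{F}(\bar{\func{u}})=0$). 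Also $\|\mathcal{F}(\func{u}_N)\|_P\le L\|\func{e}\|_P$.
\end{itemize}
Combining the two regions gives
\[
m_\rho\int_A\|\func{e}\|_P+\tfrac{\lambda_{\min}}{2}\int_{A^c}\|\func{e}\|_P^2\lesssim \|\bar{\func{u}}-\func{u}_{LS,N}\|_{L^\infty}\int_{A^c}\|\func{e}\|_P .
\]
The right side is $o(1)$, so $\|\func{e}\|_{L^1(A)}\to0$ and $\|\func{e}\|_{L^2(A^c)}=\mathcal{O}(\|\bar{\func{u}}-\func{u}_{LS,N}\|_{L^\infty})$, both uniformly over all solutions.

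\emph{Step 3 (closing).} The goal is to upgrade Step 2 to $\|\func{u}_N-\func{u}_{LS,N}\|_{L^2}\le r_N$, the uniqueness radius of Newton--Kantorovich, which is of order $1/(\beta_N K_N)\sim 1/N$. Step 2 does not give this directly, so this is the main obstacle. I would try two routes.
\begin{itemize}
\item First, bootstrap. Since $\func{u}_N$ is a polynomial, Nikolskii's inequality converts $L^1$/$L^2$ smallness into $L^\infty$ smallness at the cost of powers of $N$. If the estimates are sharp enough, $\func{u}_N$ lies in the $L^\infty$-neighborhood $U$ where the Jacobian is uniformly coercive. Uniqueness then follows from the monotonicity identity $0=\int\langle\mathcal{F}(\func{u}_N)-\mathcal{F}(\func{u}_N^\star),\func{u}_N-\func{u}_N^\star\rangle_P\,\mathrm{d}z\ge\frac{\lambda_{\min}}{2}\|\func{u}_N-\func{u}_N^\star\|^2$, which is legitimate because the difference lies in $\Pi_N^n$.
\item If the $N$-powers do not balance, apply the same monotonicity integral to $\func{u}_N-\func{u}_N^\star$, split the domain as in Step 2 with $\func{u}_N^\star$ in place of $\bar{\func{u}}$, and conclude $\func{u}_N=\func{u}_N^\star$ directly.
\end{itemize}
The delicate part is the set where $\func{u}_N$ is far from the branch. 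There the coercivity and growth conditions must dominate the small perturbation $\func{u}_N^\star-\bar{\func{u}}$, handled as in Step 2.
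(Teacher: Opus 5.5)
\textbf{There is a genuine gap at Step~3, which you flag yourself as the main obstacle.} The proposal never shows that an arbitrary Galerkin solution $\func{u}_N$ coincides with the Newton--Kantorovich solution $\func{u}_N^\star$.

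\textbf{Route~1 does not close with the estimates you derived.} On $A$ you only control $\func{e}$ in $L^1$. Your right-hand side is governed by $\delta_N=\|\bar{\func{u}}-\func{u}_{LS,N}\|_{L^\infty}=\mathcal{O}(N^{3/4+\delta-s})$, so all you know is $\|\func{u}_N-\func{u}_{LS,N}\|_{L^1}=\mathcal{O}(\delta_N)$. Entering the $L^\infty$-tube $U$ then costs a factor $N^2$ through the $L^1$--$L^\infty$ Nikolskii inequality. This gives $\mathcal{O}(N^{11/4+\delta-s})$, which does not vanish for $s$ near $1$.

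Route~1 can be rescued, but only with an idea that is not in your proposal:
\begin{itemize}
\item On $A^c$, bound the cross term by Cauchy--Schwarz against $\|\bar{\func{u}}-\func{u}_{LS,N}\|_{L^2}=\mathcal{O}(N^{-s})$ instead of its sup norm.
\item This gives $\|\func{e}\|_{L^2(A^c)}=\mathcal{O}(N^{-s})$ and $\int_A\|\func{e}\|_P=\mathcal{O}(N^{-2s})$, hence $|A|\le\rho^{-1}\int_A\|\func{e}\|_P=\mathcal{O}(N^{-2s})$.
\item With $\func{p}=\func{u}_N-\func{u}_{LS,N}$, the bound $\int_A\|\func{p}\|^2\le|A|\,C^2N^2\|\func{p}\|_{L^2}^2$ has $N^2|A|\to0$ because $s>1$, so it can be absorbed.
\item This yields $\|\func{p}\|_{L^2}=\mathcal{O}(N^{-s})$ and $\|\func{u}_N-\bar{\func{u}}\|_{L^\infty}\to0$ uniformly over all solutions. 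Your monotonicity identity in $U$ then finishes.
\end{itemize}

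\textbf{Route~2 is the paper's argument, but as written it is one sentence and misses two ingredients.} With them, your Step~2 becomes unnecessary.

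First, you need uniform convergence of $\func{u}_N^\star$:
\[
\|\func{u}_N^\star-\bar{\func{u}}\|_{L^\infty}\le CN\|\func{u}_N^\star-\func{u}_{LS,N}\|_{L^2}+\|\func{u}_{LS,N}-\bar{\func{u}}\|_{L^\infty}=\mathcal{O}(N^{1-s})+o(1).
\]
This is where $s>1$ enters, and it gives $\|\mathcal{F}(\func{u}_N^\star)\|_{L^\infty}\to0$. Your Step~1 only records $L^2$ convergence.

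Second, the split must be made relative to $\bar{\func{u}}$, where the hypotheses are centred, not relative to $\func{u}_N^\star$. It yields a \emph{pointwise} one-sided monotonicity at $\func{w}=\func{u}_N^\star$ against any $\func{v}$:
\begin{itemize}
\item If $\|\func{v}(z)-\bar{\func{u}}(z)\|_P<\rho$, the segment stays in the tube. The mean value theorem gives a lower bound $\frac{\lambda_{\min}}{2}\|\func{w}(z)-\func{v}(z)\|_P^2$.
\item Otherwise, write $\func{v}-\func{w}=(\func{v}-\bar{\func{u}})-(\func{w}-\bar{\func{u}})$. Absorb $\langle\mathcal{F}(\func{v}),\func{w}-\bar{\func{u}}\rangle_P$ via the growth bound once $\|\func{w}-\bar{\func{u}}\|_{L^\infty}\le 1/(2C_\rho)\le\rho/2$.
\item In the same case, bound $|\langle\mathcal{F}(\func{w}),\func{v}-\func{w}\rangle_P|\le2\varepsilon\|\func{v}-\bar{\func{u}}\|_P$ once $\|\mathcal{F}(\func{w})\|_{L^\infty}<\varepsilon<m_\rho/4$. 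Together these give at least $\left(\frac{m_\rho}{2}-2\varepsilon\right)\|\func{v}-\bar{\func{u}}\|_P>0$.
\end{itemize}
Testing both Galerkin equations with $P(\func{w}-\func{v})\in\Pi_N^n$ then forces $\func{v}=\func{w}$. No a priori localization of $\func{v}$ is needed.
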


\begin{proof}
The existence of a solution $\func{u}_N\in\Pi_N^n$ follows from Theorem~\ref{thm:existence_of_PC_ls_sol} for $N \geq N_{\text{conv}}$. We first prove that $\func{u}_N$ converges to $\bar{\func{u}}$ uniformly. Using the triangle inequality and the Nikolskii's inequality introduced earlier, we bound the $L^\infty$ error as:
$$
\|\func{u}_N - \bar{\func{u}}\|_{L^\infty} \leq \|\func{u}_N - \func{u}_{LS,N}\|_{L^\infty} + \|\func{u}_{LS,N} - \bar{\func{u}}\|_{L^\infty} \leq C N \|\func{u}_N - \func{u}_{LS,N}\|_{L^2} + \|\func{u}_{LS,N} - \bar{\func{u}}\|_{L^\infty}.
$$
From Theorem~\ref{thm:consistency} and Theorem~\ref{thm:existence_of_PC_ls_sol}, we know that $\|\func{u}_N - \func{u}_{LS,N}\|_{L^2}=\mathcal{O}(N^{-s})$ and $\|\func{u}_{LS,N} - \bar{\func{u}}\|_{L^\infty}=\mathcal{O}(N^{\frac{3}{4}+\delta-s})$. Since $s>1$, the terms decay asymptotically, yielding $\lim_{N\to\infty}\|\func{u}_N - \bar{\func{u}}\|_{L^\infty}=0$. Given that $\mathcal{F}$ is Lipschitz continuous on bounded sets in $L^\infty$ by Remark~\ref{prop:LocalLipschitzNemytskii}, this uniform convergence also implies that $\lim_{N\to\infty}\|\mathcal{F}(\func{u}_N)\|_{L^\infty}=0$.

Now, recall that the non-degeneracy condition gives $\langle D_{\vect{u}}\func{f}(\bar{\func{u}}(z), \bar{\mu}+\sigma z)\vect{w}, \vect{w} \rangle_P \geq \lambda_{\min} \|\vect{w}\|_P^2 > 0$ for all $z \in I_\xi$. 
Since $D_{\vect{u}}\func{f}$ is continuous and $I_\xi$ is compact, there exists $\rho>0$ such that
$\|\vect{u}-\bar{\func{u}}(z)\|_P<\rho$ implies $\langle D_{\vect{u}}\func{f}(\bar{\vect{u}},\bar{\mu}+\sigma z)\vect{w}, \vect{w} \rangle_P \geq\frac{\lambda_{\min}}{2}\|\vect{w}\|_P^2\ \forall z\in I_\xi$.
Introducing the neighborhood of $\bar{\func{u}}$ as 
$$
U \coloneq \left\{ \func{u} \in L^\infty(I_\xi; \R^n) : \sup_{z \in I_\xi} \|\func{u}(z)-\bar{\func{u}}(z)\|_P < \rho \right\},
$$
then, for every $\func{u} \in U$ and every $z \in I_\xi$, $\langle D_{\vect{u}}\func{f}(\func{u}(z),\bar{\mu}+\sigma z)\vect{w}, \vect{w} \rangle_P \geq \frac{\lambda_{\min}}{2}\|\vect{w}\|_P^2$. 
Observe that $U$ is convex, since for all $z\in I_\xi$, given $t\in[0,1]$ and $\func{u}_3\coloneq t\func{u}_1+(1-t)\func{u}_2$ for $\func{u}_1,\func{u}_2\in U$, we have  
\begin{align*}
    \|\bar{\func{u}}(z)-\func{u}_3(z)\|_P&=\|t(\bar{\func{u}}(z)-\func{u}_1(z))+(1-t)(\bar{\func{u}}(z)-\func{u}_2(z))\|_P\\
    &\leq t\|\bar{\func{u}}(z)-\func{u}_1(z)\|_P+(1-t)\|\bar{\func{u}}(z)-\func{u}_2(z)\|_P\\
    &<t\rho+(1-t)\rho=\rho.
\end{align*}
Consequently, we can apply the integral Mean Value Theorem~\cite{cartan1971differential}: for any $z\in I_\xi$, and $\func{u}_1,\func{u}_2\in U$, we have
$$
\mathcal{F}(\func{u}_1)(z) - \mathcal{F}(\func{u}_2)(z) = \left( \int_0^1 D_{\vect{u}}\func{f}(\func{u}_2(z) + t(\func{u}_1(z)-\func{u}_2(z)), \bar{\mu} + \sigma z) \, dt \right) (\func{u}_1(z) - \func{u}_2(z)).
$$
Since the segment $\func{u}_3(t, z) \coloneq \func{u}_2(z) + t(\func{u}_1(z)-\func{u}_2(z))$ is entirely contained in $U$ for all $t \in [0,1]$, taking the $P$-inner product with $(\func{u}_1(z) - \func{u}_2(z))$ implies that:
$$
\langle \mathcal{F}(\func{u}_1)(z) - \mathcal{F}(\func{u}_2)(z), \func{u}_1(z) - \func{u}_2(z) \rangle_P \geq \frac{\lambda_{\min}}{2} \|\func{u}_1(z) - \func{u}_2(z)\|_P^2 \quad \forall\,z\in I_\xi.
$$
This directly implies that $\mathcal{F}$ is strictly pointwise monotone in $U$, since
$$
\langle \mathcal{F}(\func{u}_1)(z)-\mathcal{F}(\func{u}_2)(z), \func{u}_1(z)-\func{u}_2(z) \rangle_P \geq \frac{\lambda_{\min}}{2} \|\func{u}_1(z)-\func{u}_2(z)\|_P^2 \geq 0 \quad \forall\,\func{u}_1,\func{u}_2\in U.
$$
Integrating this expression over $I_\xi$ ensures strict monotonicity in the weak sense, i.e.\  we obtain that for any $\func{u}_1, \func{u}_2 \in U$ with $\func{u}_1 \neq \func{u}_2$ in a positive--measure set it holds
$$
\int_{I_{\xi}}\langle \mathcal{F}(\func{u}_1)(z)-\mathcal{F}(\func{u}_2)(z), \func{u}_1(z)-\func{u}_2(z) \rangle_P\,\mathrm{d}z > 0.
$$

Furthermore, by the uniform radial coercivity assumption, the radial projection of $\mathcal{F}$ is uniformly bounded away from $0$ outside of $U$. In fact, by setting the radius of the uniform radial coercivity to match $\rho$, there exists a constant $m_U > 0$ such that, for all $z \in I_\xi$ s.t. $\|\func{u}(z)-\bar{\func{u}}(z)\|_P\geq\rho$,
$$
\langle \mathcal{F}(\func{u})(z), \func{u}(z) - \bar{\func{u}}(z) \rangle_P \ge m_U \|\func{u}(z) - \bar{\func{u}}(z)\|_P.
$$ 

Next, we choose $\varepsilon \in (0, m_U/4)$ and define 
$$
V \coloneq \left\{ \func{u} \in \mathcal{C}(I_\xi; \R^n) : \sup_{z \in I_\xi}\|\mathcal{F}(\func{u})(z)\|_P < \varepsilon \right\},
$$ 
and the following neighborhood of $\bar{\func{u}}(z)$
$$
U^* \coloneq \left\{ \func{u} \in U : \sup_{z \in I_\xi}\|\func{u}(z)-\bar{\func{u}}(z)\|_P \leq \frac{1}{2C_\rho} \right\}.
$$
This is a tighter neighborhood such that $U^*\subsetneq U$. Indeed, applying the Cauchy--Schwarz inequality along with the radial coercivity and the relative growth bound for any $\vect{u}$ such that $\|\vect{u} - \bar{\func{u}}(z)\|_P \geq \rho$, we obtain 
\begin{align*}
\langle\func{f}(\vect{u},\bar{\mu}+\sigma z), \vect{u} - \bar{\func{u}}(z) \rangle_P &\leq \|\func{f}(\vect{u},\bar{\mu}+\sigma z)\|_P \|\vect{u} - \bar{\func{u}}(z)\|_P\\
&\leq C_\rho \langle \func{f}(\vect{u},\bar{\mu}+\sigma z), \vect{u} - \bar{\func{u}}(z) \rangle_P \|\vect{u}-\bar{\func{u}}(z)\|_P.
\end{align*}
Dividing by the inner product gives $1 \leq C_\rho \|\vect{u}-\bar{\func{u}}(z)\|_P$, that must hold for all $\|\vect{u}-\bar{\func{u}}(z)\|_P \geq \rho$; consequently, it is necessary that $C_\rho \geq 1/\rho$. Consequently, the radius of $U^*$ satisfies $\frac{1}{2C_\rho} \leq \frac{\rho}{2} < \rho$, ensuring $U^* \subsetneq U$.
Since $\|\mathcal{F}(\func{u}_N)\|_{L^\infty} \to 0$ and $\|\bar{\func{u}}-\func{u}_N\|_{L^\infty} \to 0$, and due to the equivalence of norms in $\R^n$, there exists an integer $\bar{N}$ such that $\func{u}_N \in U^* \cap V$ for all $N \geq \bar{N}$.

For any fixed $\func{w} \in U^* \cap V$ and any $\func{u} \in \mathcal{C}(I_\xi; \R^n)$ with $\func{w} \neq \func{u}$, the following strict global inequality holds:
\begin{equation}
\int_{I_\xi}\langle \mathcal{F}(\func{w})(z)-\mathcal{F}(\func{u})(z), \func{w}(z)-\func{u}(z) \rangle_P\,\mathrm{d}z > 0.
\label{eq:WeakIneq}
\end{equation}
To see this, consider two cases for each $z \in I_\xi$:
\begin{itemize}
    \item[i)] If $\func{u}(z)$ is such that $\|\func{u}(z)-\bar{\func{u}}(z)\|_P<\rho$, where the Jacobian matrix is uniformly positive definite, we apply the integral Mean Value Theorem as before: 
    $$
    \langle\mathcal{F}(\func{w})(z)-\mathcal{F}(\func{u})(z), \func{w}(z)-\func{u}(z)\rangle_P\geq \frac{\lambda_{\min}}{2}\|\func{w}(z)-\func{u}(z)\|_P^2\geq 0.
    $$
    \item[ii)] If $\func{u}(z)$ is outside this neighborhood, implying that $\|\func{u}(z)-\bar{\func{u}}(z)\|_P\geq\rho$, we expand $\langle \mathcal{F}(\func{u})(z) - \mathcal{F}(\func{w})(z), \func{u}(z) - \func{w}(z) \rangle_P$ by adding and subtracting $\bar{\func{u}}(z)$:
    \begin{equation}
    \langle \mathcal{F}(\func{u})(z), \func{u}(z) - \bar{\func{u}}(z) \rangle_P - \langle \mathcal{F}(\func{u})(z), \func{w}(z) - \bar{\func{u}}(z) \rangle_P- \langle \mathcal{F}(\func{w})(z), \func{u}(z) - \func{w}(z) \rangle_P.
    \label{eq:ScalarProdSplitting}  
    \end{equation}
    We bound the second term of this expression using the Cauchy--Schwarz inequality and the relative growth bound:
    \begin{align*}
        |\langle \mathcal{F}(\func{u})(z), \func{w}(z) - \bar{\func{u}}(z) \rangle_P|&\leq\|\mathcal{F}(\func{u})(z)\|_P \|\func{w}(z) - \bar{\func{u}}(z)\|_P \\
        &\leq C_\rho\langle \mathcal{F}(\func{u})(z), \func{u}(z) - \bar{\func{u}}(z) \rangle_P \|\func{w}(z) - \bar{\func{u}}(z)\|_P.
    \end{align*}
    Consequently, factoring out the radial projection, the first two terms of equation~\eqref{eq:ScalarProdSplitting} can be bounded from below by:
    $$
    \langle \mathcal{F}(\func{u})(z), \func{u}(z) - \bar{\func{u}}(z) \rangle_P \left(1 - C_\rho \|\func{w}(z) - \bar{\func{u}}(z)\|_P\right).
    $$
    Since $\func{w} \in U^*$, we have $\|\func{w}(z) - \bar{\func{u}}(z)\|_P \le \frac{1}{2C_\rho}$. Thus, the combination of the first two terms is strictly bounded from below by $\frac{1}{2}\langle \mathcal{F}(\func{u})(z), \func{u}(z) - \bar{\func{u}}(z) \rangle_P$. Using then the radial coercivity, we have that 
    $$
    \frac{1}{2}\langle \mathcal{F}(\func{u})(z), \func{u}(z) - \bar{\func{u}}(z) \rangle_P\ge \frac{m_U}{2} \|\func{u}(z) - \bar{\func{u}}(z)\|_P > 0.
    $$ 

    For the third term, we bound its magnitude using Cauchy--Schwarz and the triangle inequality:
    $$
    |\langle \mathcal{F}(\func{w})(z), \func{u}(z) - \func{w}(z) \rangle_P| \le \|\mathcal{F}(\func{w})(z)\|_P (\|\func{u}(z) - \bar{\func{u}}(z)\|_P + \|\func{w}(z) - \bar{\func{u}}(z)\|_P).
    $$
    Recall that, since $\func{w} \in V$, $\|\mathcal{F}(\func{w})(z)\|_P<\varepsilon$, and that $\|\func{u}(z) - \bar{\func{u}}(z)\|_P \ge \rho$. On the other hand, $\func{w}(z)$ is inside $U$, meaning $\|\func{w}(z) - \bar{\func{u}}(z)\|_P < \rho \le \|\func{u}(z) - \bar{\func{u}}(z)\|_P$. 
    Therefore, the expression in the parenthesis is strictly less than $2\|\func{u}(z) - \bar{\func{u}}(z)\|_P$, leading to
    $$
    |\langle \mathcal{F}(\func{w})(z), \func{u}(z) - \func{w}(z) \rangle_P|\leq 2\varepsilon\|\func{u}(z) - \bar{\func{u}}(z)\|_P.
    $$
    Consequently, it trivially follows that $-\langle \mathcal{F}(\func{w})(z), \func{u}(z) - \func{w}(z) \rangle_P\geq -2\varepsilon\|\func{u}(z) - \bar{\func{u}}(z)\|_P$. Combining these bounds,
    $$
    \langle \mathcal{F}(\func{u})(z) - \mathcal{F}(\func{w})(z), \func{u}(z) - \func{w}(z) \rangle_P \geq \left( \frac{m_U}{2} - 2\varepsilon \right) \|\func{u}(z) - \bar{\func{u}}(z)\|_P.
    $$
    Because we initially chose $\varepsilon < m_U/4$, this lower bound remains strictly positive everywhere outside $U$.
\end{itemize}
Thus, as claimed, the integrand is non-negative everywhere and strictly positive on the set where $\func{w} \neq \func{u}$, ensuring that inequality~\eqref{eq:WeakIneq} holds.

Now, fix $N \geq \bar{N}$ and let $\func{u}_N \in U^* \cap V \cap \Pi_N^n$ be the solution of the PCE-Galerkin system. Suppose there exists another global solution $\func{v}_N \in \Pi_N^n$ to the PCE-Galerkin system, with $\func{v}_N \neq \func{u}_N$. Because the difference $(\func{u}_N - \func{v}_N)$ belongs to $\Pi_N^n$ and $P$ is a constant matrix, $P(\func{u}_N - \func{v}_N)$ also belongs to $\Pi_N^n$. We can therefore use it as a test function for both PCE-Galerkin solutions, obtaining:
$$
\int_{I_\xi} \langle \mathcal{F}(\func{u}_N)(z), P(\func{u}_N(z) - \func{v}_N(z)) \rangle_{\R^n}\,\mathrm{d}z = 0, \quad \text{and} \quad \int_{I_\xi} \langle \mathcal{F}(\func{v}_N)(z), P(\func{u}_N(z) - \func{v}_N(z)) \rangle_{\R^n}\,\mathrm{d}z = 0.
$$
By the definition of the weighted inner product, subtracting these two equations yields:
$$
\int_{I_\xi} \langle \mathcal{F}(\func{u}_N)(z) - \mathcal{F}(\func{v}_N)(z), \func{u}_N(z) - \func{v}_N(z) \rangle_P\,\mathrm{d}z = 0.
$$
However, applying Equation~\eqref{eq:WeakIneq} to $\func{w}=\func{u}_N$ and $\func{u}=\func{v}_N$, since $\func{u}_N \neq \func{v}_N$, the integral must be strictly positive. This is a contradiction, leading to $\func{v}_N = \func{u}_N$.
\end{proof}

\begin{rmk}\label{rmk:uniquenss1d}
We emphasize that in the scalar case, where $n=1$, the multi-dimensional assumptions naturally reduce to standard uniform boundedness conditions, by choosing also the matrix $P=1$. If $\func{f}$ is continuous, admits a unique root at $\bar{\func{u}}(z)$ with a strictly positive derivative $\partial_u \func{f} \ge \lambda_{\min} > 0$, and satisfies $|\func{f}(u,\bar{\func{\mu}}+\sigma z)| \ge m$ for $|u| \ge R$, the topology forces $\func{f}(u)$ to be strictly positive for $u > \bar{\func{u}}(z)$ and strictly negative for $u < \bar{\func{u}}(z)$. 
Consequently, the inner product becomes 
$$
\func{f}(u,\bar{\func{\mu}}+\sigma z)(u-\bar{\func{u}}(z)) = |\func{f}(u,\bar{\func{\mu}}+\sigma z)||u-\bar{\func{u}}(z)|.
$$ 

In this scenario, the uniform boundedness away from zero automatically guarantees the radial coercivity condition, yielding $f(u,\bar{\mu}+\sigma z)(u-\bar{u}(z)) \ge m|u-\bar{u}(z)|$. 
Furthermore, the relative growth bound $|f(u,\bar{\mu}+\sigma z)| \le C_\rho f(u,\bar{\mu}+\sigma z)(u-\bar{u}(z))$ simplifies to $1 \le C_\rho |u-\bar{u}(z)|$, which is trivially satisfied for any distance $|u-\bar{u}(z)| \ge \rho$ by choosing $C_\rho = 1/\rho$. 

For $n \ge 2$, however, the vector field could theoretically have a large norm while rotating in a tangential direction around root, making the radial coercivity and relative growth constraints necessary to prevent the loss of uniqueness.
\end{rmk}
\section{Numerical Results}\label{sec:numerical-results}

In this section, we validate the theoretical framework established in Section~\ref{sec:theoretical-results} through numerical experiments. Our primary goals are to provide an efficient strategy for identifying branch-approximating solutions of the PCE-Galerkin system, and to numerically verify the theoretical convergence rates derived in Section~\ref{sec:theoretical-results} under varying degrees of analytical regularity.

As shown in Section~\ref{sec:ode-bifurcations}, the PCE-Galerkin system admits a vast set of algebraic roots, most of which are highly oscillatory. To isolate the branch-approximating solutions, we employ a degree continuation algorithm, summarized in Algorithm~\ref{alg:degree_continuation}, which bases its convergence properties on Corollary~\ref{cor:successive_sol}.

To ensure that all physical branches are captured, the degree continuation algorithm is executed independently for each branch. For each execution,
the procedure initializes at degree $N=0$, where the PCE-Galerkin system reduces to finding the constant roots of the averaged vector field, and the solver is seeded with a random initial guess $\vect{u}_{\text{init}} \in \R^n$. For $N=0$, the Galerkin projection resembles the deterministic problem evaluated at the mean parameter $\bar{\mu}$. Consequently, the Newton updates converge to a root that closely approximates an exact steady state at $\bar{\mu}$.

\begin{algorithm}[b]
\caption{Degree continuation algorithm}
\label{alg:degree_continuation}
\begin{algorithmic}[1]
\Statex{\textbf{Input:} Galerkin residual maps $\{\func{F}_N:\R^{n(N+1)}\to\R^{n(N+1)}\}_{N=0}^{N_{\max}}$, maximum degree $N_{\max}$,} \LineComment{interval midpoint $\bar{\mu}$, standard deviation $\sigma$, random initial guess $\vect{u}_{\text{init}} \in \R^n$}
\Statex{\textbf{Output:} Coefficient vector $\hat{\vect{u}}_{N_{\max}}$ defining a branch-approximating PC solution}
\Statex
\State{$N \gets 0$}
\State{$\hat{\vect{u}}_0 \gets \mathtt{root}\big(\func{F}_0(\cdot; \bar{\mu}, \sigma), \vect{u}_{\text{init}}\big)$}\Comment{Baseline initialization}
\While{$N < N_{\max}$}
    \State $N \gets N + 1$
    \State {$\vect{u}_{\text{init}} \gets [\hat{\vect{u}}_{N-1}, \vect{0}]$}\Comment{Predictor}
    \State{$\hat{\vect{u}}_N \gets \mathtt{root}\big(\func{F}_N(\cdot; \bar{\mu}, \sigma), \vect{u}_{\text{init}}\big)$}\Comment{Corrector}
\EndWhile
\end{algorithmic}
\end{algorithm} 

To increase accuracy, we raise the polynomial degree incrementally. According to Corollary~\ref{cor:successive_sol}, the distance between successive approximations behaves as $\|\func{u}_N - \func{u}_{N+1}\|_{L^2} = \mathcal{O}(N^{-s})$. This theoretical guarantee justifies using the computed coefficients at degree $N$, appended with a zero for the highest-degree term, as the exact initial guess for the Newton solver at the augmented system of degree $N+1$. This initialization ensures that the predictor falls strictly within the convergence neighborhood of the higher-degree system, guaranteeing stable convergence toward the exact physical branch $\bar{\func{u}}$.

We note that different random seeds may converge to the same solution; therefore, we only collect unique roots, and if some a-priori knowledge on the number of branches is given, it can be used as a stopping criterion until reaching the maximum number of initializations, closely resembling the procedure adopted for the convergence of the deflation method \cite{FarrellDeflationTechniquesFinding2015}.
The first two examples, the pitchfork and S-shaped normal forms, serve as fundamental scalar benchmarks to test the spectral convergence rates and the uniqueness guarantees. The subsequent two examples extend our framework to more complex, multi-dimensional models of physical and biological interest: the genetic toggle switch and the Lorenz chaotic system. Finally, the last example investigates the method's performance on a degenerate system exhibiting an infinite number of steady states within a bounded parameter regime.

\subsection{Pitchfork Bifurcation}
We first apply this methodology to a scalar example, the supercritical pitchfork normal form, defined by the field
\begin{equation}
    f(u,\mu)=-u^3+\mu u,\qquad \partial_uf(u,\mu)=-3u^2+\mu.
    \label{eq:pitchfork}
\end{equation}

As it appears in Figure~\ref{fig:pitchfork}, this system undergoes a bifurcation at $\mu=0$, where the stable branch $u=0$ becomes unstable and two stable branches $u=\pm\sqrt{\mu}$ emerge for $\mu>0$. 

\begin{figure}[t]
    \centering
    \includegraphics[width=0.5\linewidth]{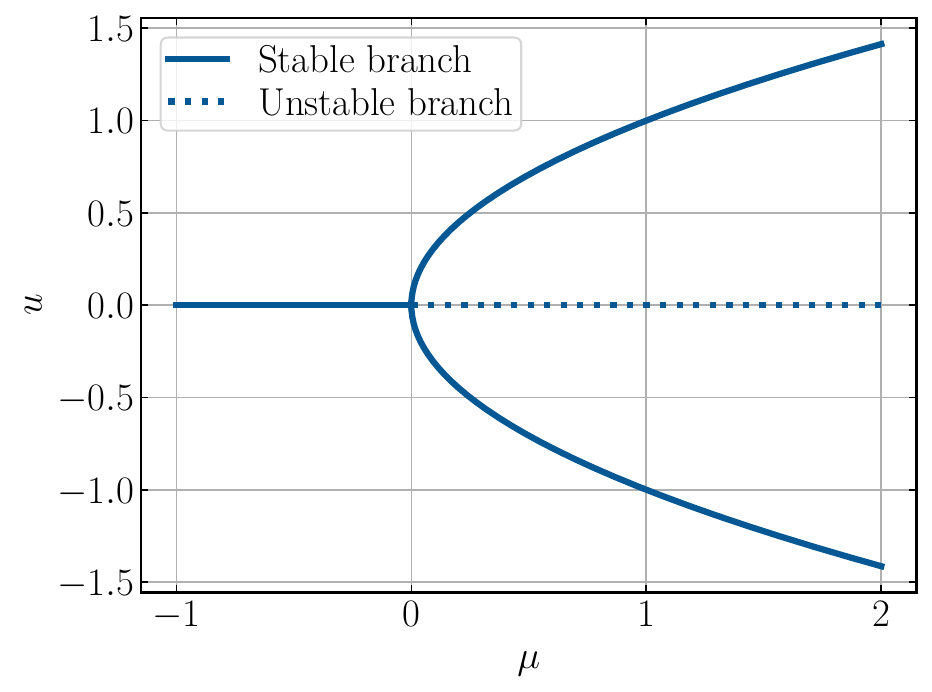}
    \caption{Pitchfork normal form bifurcation diagram.}
    \label{fig:pitchfork}
\end{figure}

By formulating the parameter as $\mu(\xi)=\bar{\mu}+\sigma\xi$, we explore three distinct parametric intervals to validate the theoretical bounds on the convergence rate $E(N)$, which depends on the Sobolev regularity $s$ of the exact branch within the evaluated interval.

First, we consider a \emph{smooth region}, where no bifurcation singularities appear. We choose the parameters $\bar{\mu}$ and $\sigma$ such that the domain entirely lies in the positive real numbers, yielding, for example, the interval $I=[0.2, 1]$ where the stable branches $\bar{u}(\mu)=\pm\sqrt{\mu}$ are analytical. From now on, we focus on the branch $\bar{u}(\mu)=\sqrt{\mu}$, though the same can be applied to the negative one. Since the derivative of the field is $\partial_u f(\bar{u}(\mu),\mu)=-2\mu$, which is a decreasing function, we observe that $\partial_u f(\bar{u}(\mu),\mu)\leq-0.4$ for any $\mu\in I$. This ensures uniform negative definiteness, satisfying the theoretical hypotheses of the Newton--Kantorovich convergence framework. 

As illustrated in Figure~\ref{fig:no_bifurcation}, the spectral approximations align perfectly with the exact branch. From low degrees (blue curves) up to $N=30$ (dashed orange line), the numerical solutions rapidly converge to the exact branch (solid black line); notably, the solution for $N=5$ is already indistinguishable from the one obtained with $N=30$. In the resulting error tracking, the $L^2$-norm of the residual and the $L^2$ and $L^\infty$ norms of the difference $u_N-u_{LS,N}$ decay exponentially, confirming the optimal spectral accuracy of the PC formulation in analytic regions, perfectly matching the theoretical reference bound $\mathcal{O}(e^{-N})$ (dash-dotted black line).

\begin{figure}[t]
    \centering
    \begin{minipage}{0.48\textwidth}
        \centering
        \includegraphics[width=\linewidth]{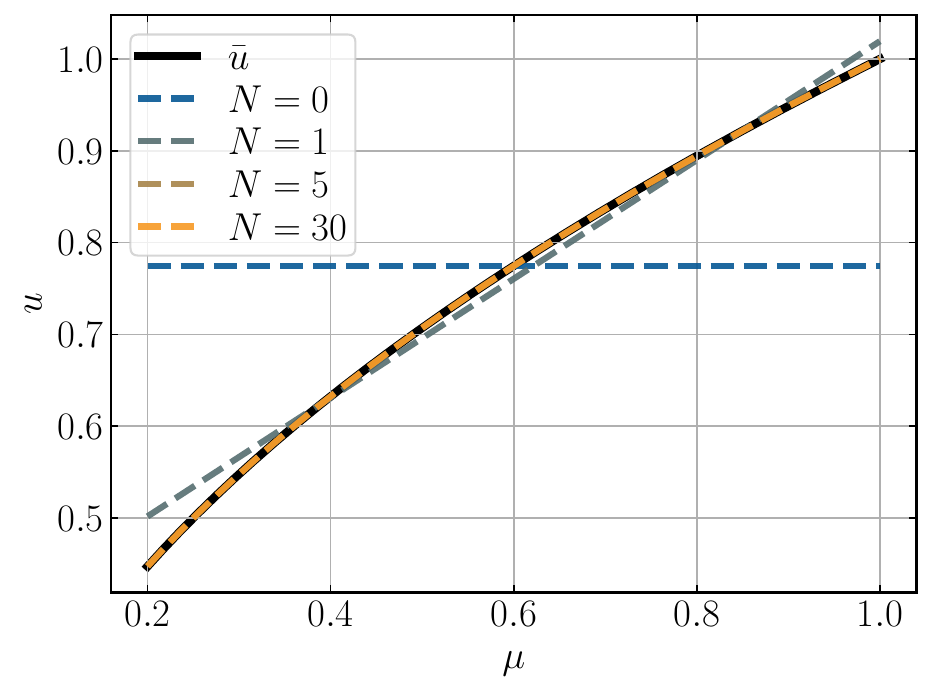}
    \end{minipage}\hfill
    \begin{minipage}{0.48\textwidth}
        \centering
        \includegraphics[width=\linewidth]{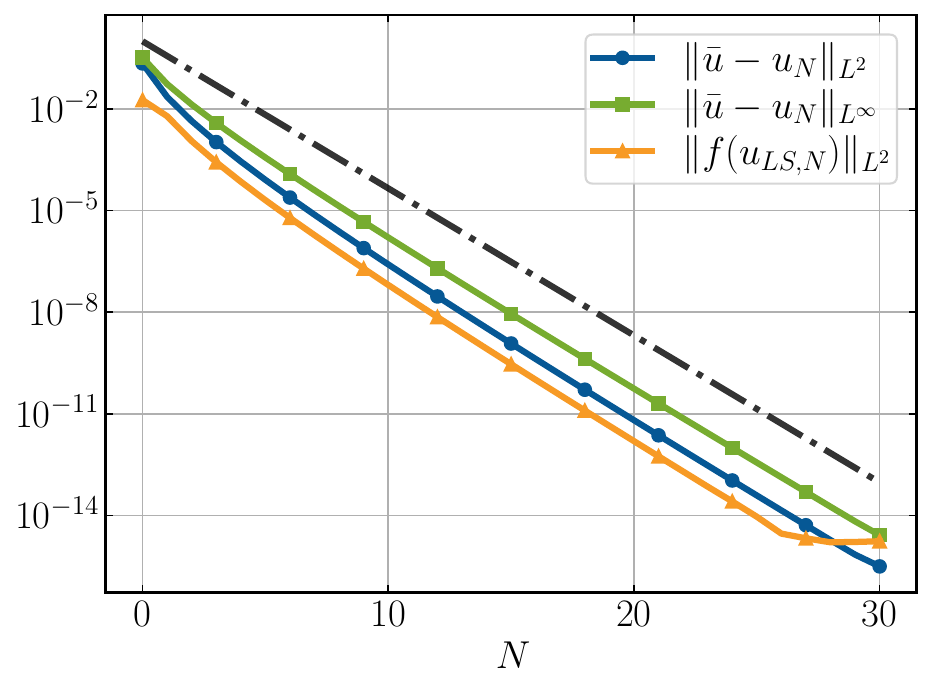}
    \end{minipage}
    \caption{PCE-Galerkin approximations (\emph{left}) and convergence analysis (\emph{right}) in the \emph{smooth regime}, $I=[0.2, 1]$.}
    \label{fig:no_bifurcation}
\end{figure}

Next, we shift the interval to a \emph{boundary singularity region}, setting $I=[0, 1]$. Here, the exact branch $\bar{u}(\mu) = \sqrt{\mu}$ exhibits a loss of differentiability at $\mu=0$. The derivative of the system approaches zero, violating the uniform negative definiteness condition since $\lambda_{\min}=0$. As predicted, this singularity degrades the convergence rate from exponential to algebraic. Furthermore, the infinite derivative at $\mu=0$ makes uniform approximation challenging, causing the $L^\infty$ error to be higher than the $L^2$ error due to the difficulty of matching the exact solution near the singularity. Despite the theoretical breakdown, the solver retains the branch without diverging.

This stability and the specific algebraic decay rates align with Remark~\ref{rmk:superconvergence}. In fact, since the fractional singularity $\alpha=1/2$ is located at the domain's endpoint, the uniform error decays at the rate of $\mathcal{O}(N^{-2\alpha}) = \mathcal{O}(N^{-1})$. As shown in Figure~\ref{fig:bifurcation_extremum}, the PCE approximations (up to $N=30$) and the empirical metrics reflect these bounds, demonstrating robust convergence despite the lack of strict Sobolev regularity. Notably, the errors conform to the theoretical algebraic behavior of $\mathcal{O}(N^{-1})$ (dashed black lines), and the Galerkin residual achieves a faster decay rate of $\mathcal{O}(N^{-2})$ (dotted black line).

\begin{figure}[t]
    \centering
    \begin{minipage}{0.48\textwidth}
        \centering
        \includegraphics[width=\linewidth]{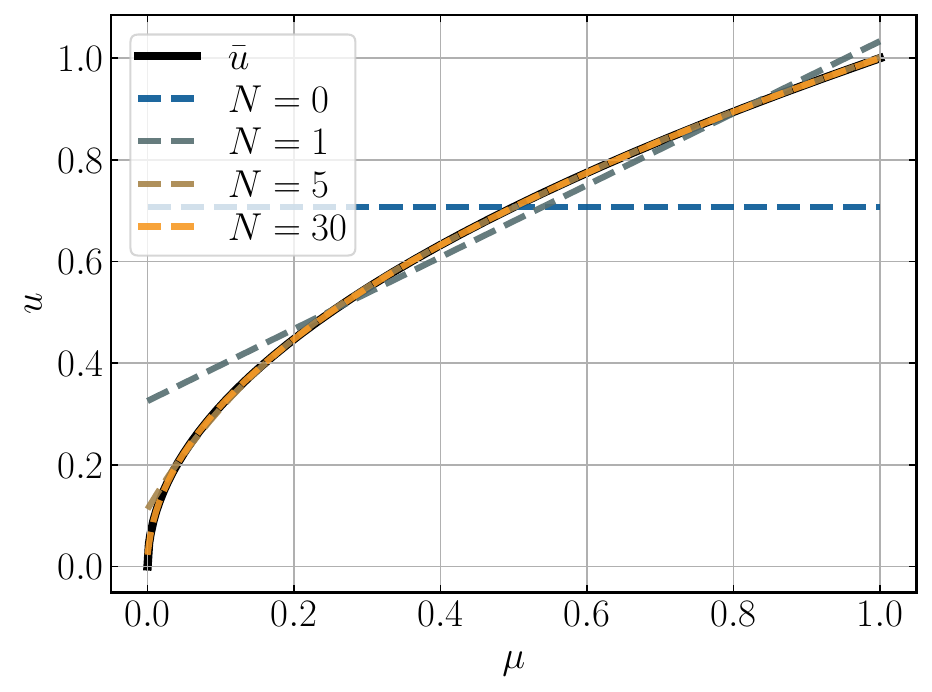}
    \end{minipage}\hfill
    \begin{minipage}{0.48\textwidth}
        \centering
        \includegraphics[width=\linewidth]{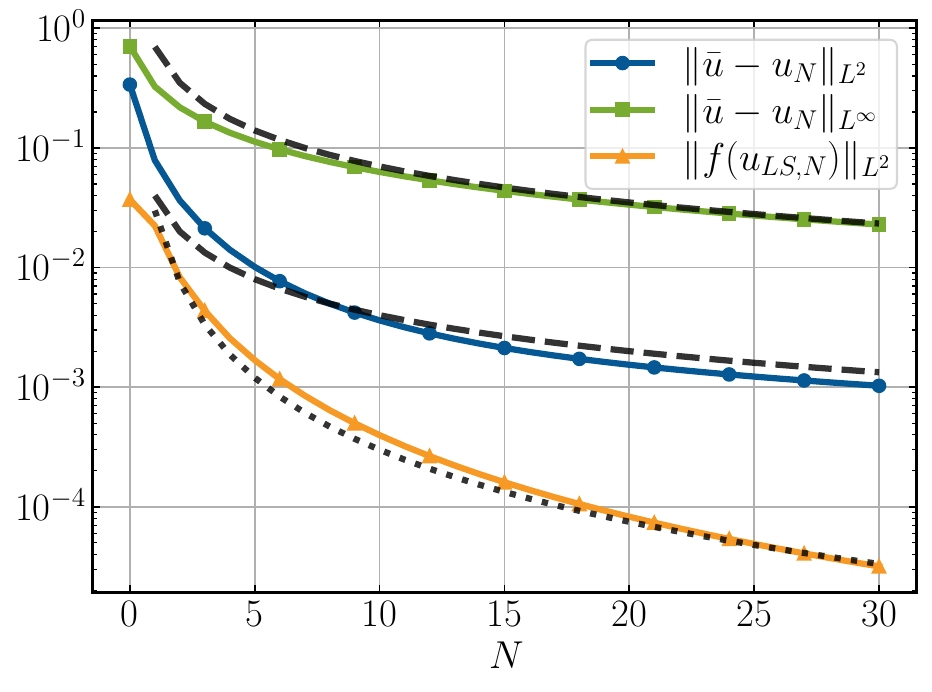}
    \end{minipage}

    \caption{PCE-Galerkin approximations (\emph{left}) and convergence analysis (\emph{right}) in the \emph{boundary singularity region}, $I=[0, 1]$.}
    \label{fig:bifurcation_extremum}
\end{figure}

Finally, we consider the \emph{interior singularity regime}, where the bifurcation point is situated strictly within the parameter interval chosen as $I=[-1, 3]$. In this scenario, the branches do not terminate at the bifurcation point, and our method captures them as continuous curves extended over the entire interval. By denoting the indicator function with $\chi_{[0, \infty)}$, the three continuous branches can be expressed as $\bar{u}_{\pm}(\mu) = \pm \sqrt{\mu} \, \chi_{[0, 3)}(\mu)$, and $\bar{u}_0(\mu) = 0$.

Focusing on the branch $\bar{u}_+$, the eigenvalue of the linearized system crosses zero inside the parametric interval, thereby violating local non-degeneracy assumptions. Despite this theoretical breakdown, applying degree continuation allows the PC Galerkin formulation to seamlessly capture this branch, as well as the other two branches $\bar{u}_0$ and $\bar{u}_-$. However, as illustrated in Figure~\ref{fig:bifurcation_interval}, the PCE approximations (shown up to $N=30$) reveal that the non-smooth transition at $\mu=0$ induces localized oscillations. This is a classic manifestation of the Gibbs phenomenon in spectral methods~\cite{gottlieb1997gibbs, hesthaven2007spectral}.

This internal loss of regularity severely restricts the convergence speed. Because the fractional singularity $\alpha=1/2$ is now located in the interior of the domain rather than at the boundary, the effects discussed in Remark~\ref{rmk:superconvergence} are significantly weakened. Consequently, the theoretical decay of the $L^2$ error, $L^\infty$ error, and projection residual is bounded by $\mathcal{O}(N^{-1/2})$. Furthermore, the observed decay is not strictly monotonic with respect to the polynomial degree $N$. Although the theoretical upper bound is $\mathcal{O}(N^{-1/2})$ (dash-dotted black line), the residual exhibits an asymptotic decay rate closer to $\mathcal{O}(N^{-1})$ (dashed black line).

\begin{figure}[t]
    \centering
    \begin{minipage}{0.48\textwidth}
        \centering
        \includegraphics[width=\linewidth]{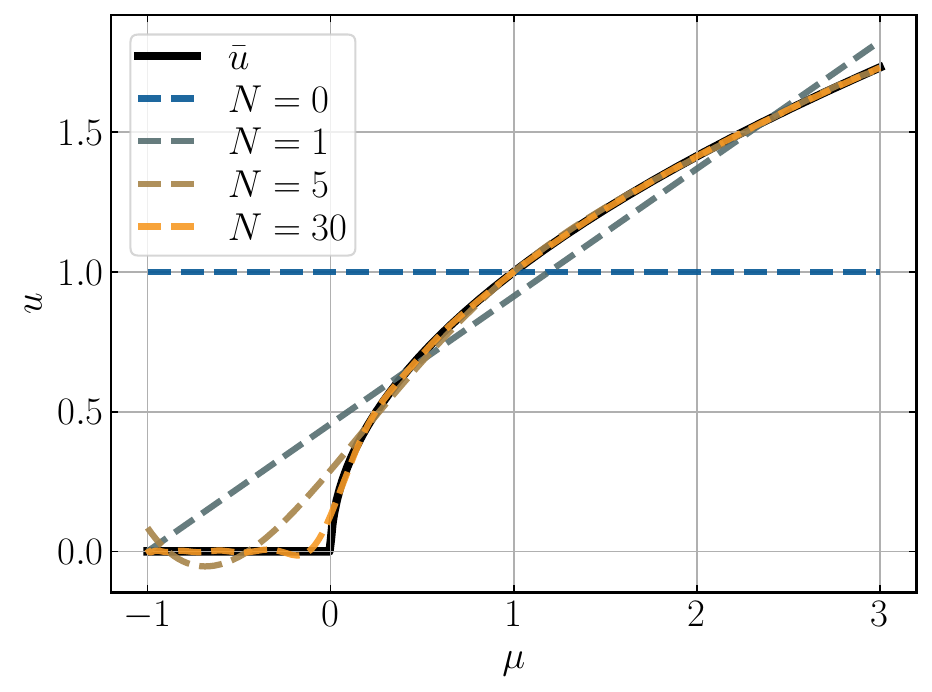}
    \end{minipage}\hfill
    \begin{minipage}{0.48\textwidth}
        \centering
        \includegraphics[width=\linewidth]{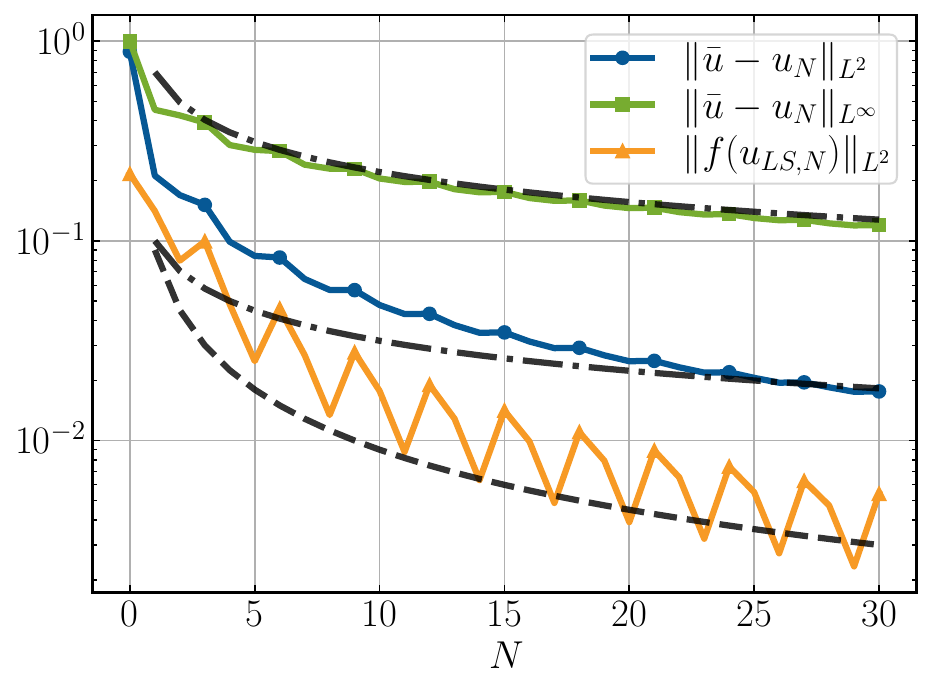}
    \end{minipage}

    \caption{PCE-Galerkin approximations (\emph{left}) and convergence analysis (\emph{right}) in the \emph{interior singularity regime}, $I=[-1, 3]$.}
    \label{fig:bifurcation_interval}
\end{figure}

\begin{rmk}
The Newton--Kantorovich condition $h_N = \beta_N L_N \eta_N \leq 0.5$ is sufficient but not necessary. Near singularities, the unbounded growth of the inverse derivative causes $h_N$ to exceed this threshold. Nevertheless, our numerical tests show that the continuation converges to the correct branches, demonstrating empirically that the basin of attraction of the Galerkin formulation may extend beyond the local theoretical bounds provided by the theorem.
\end{rmk}

\subsection{S-Shaped Bifurcation}
To explicitly demonstrate the application of the global uniqueness guarantees established in Theorem~\ref{thm:uniqueness} in a nontrivial uniqueness region of a bifurcating system, we consider the S-shaped bifurcation scalar system
\begin{equation}
    f(u,\mu)=-u^3+u+\mu,\qquad\partial_u f(u,\mu)=-3u^2+1.
    \label{eq:sshaped}
\end{equation}

By setting $\partial_u f = 0$, we identify two saddle-node bifurcation points corresponding to the state-parameter pairs $(u_{\mp}, \mu_{\pm}) = \left(\mp \frac{1}{\sqrt{3}}, \pm \frac{2}{3\sqrt{3}}\right)$. Inside the interval $\mu \in (\mu_-, \mu_+)$, the system exhibits three distinct coexisting branches, as it appears from Figure~\ref{fig:s_shaped}. However, for $|\mu| > \frac{2}{3\sqrt{3}}$, it consists of a single, unique real root.
\begin{figure}[t]
    \centering
    \includegraphics[width=0.5\linewidth]{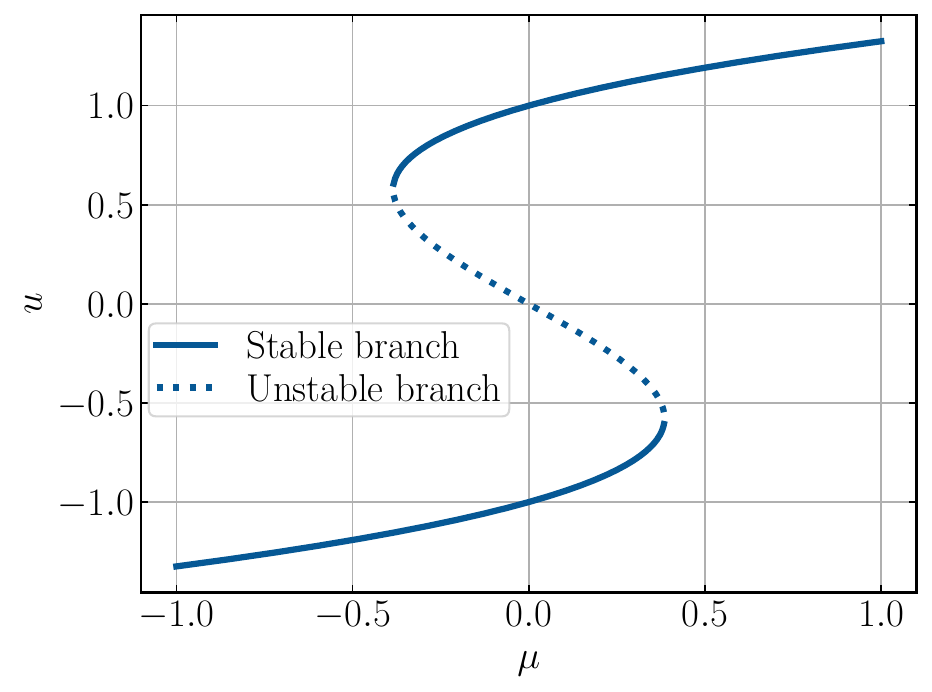}

    \caption{S-shaped dynamical system bifurcation diagram.}
    \label{fig:s_shaped}
\end{figure}

We consider a parameter domain that lies in this uniqueness region, choosing for instance $I=[0.5, 1.5]$. There, the continuous problem admits a unique, analytical solution branch $\bar{u}(\mu)$. Furthermore, since $\bar{u}(\mu)>\frac{1}{\sqrt{3}}$ for all $\mu \in I$, the derivative is uniformly negative definite evaluated along the branch, i.e.\ $\partial_u f(\bar{u}(\mu), \mu) = -3\bar{u}(\mu)^2 + 1 \le \lambda_{\min} < 0,\ \forall \mu \in I$.

Then, since $I$ is bounded and $f(u, \mu)$ is a scalar polynomial, as $|u| \to \infty$, the magnitude of the field $|f(u, \mu)|$ diverges to $+\infty$. Considering Remark~\ref{rmk:uniquenss1d}, this asymptotic polynomial growth satisfies the uniform radial coercivity constraint $|f| \ge m$ outside any neighborhood of the exact root $\bar{u}(\mu)$ for an appropriate $m$. Consequently, all the hypotheses of the uniqueness Theorem~\ref{thm:uniqueness} are satisfied. This provides a powerful theoretical fact: there exists an integer $\bar{N}$ such that for all $N \ge \bar{N}$, the finite-dimensional PCE-Galerkin problem~\eqref{eq:PB2} admits \emph{exactly one} polynomial solution $u_N \in \Pi_N^1$. As shown in Figure~\ref{fig:s_shaped_uniqueness}, the computed PCE approximations (shown up to $N=20$) converge to the continuous branch $\bar{u}$ at an exponential rate. The empirical error and Galerkin residual decay perfectly match the theoretical reference bound $\mathcal{O}(e^{-N})$ (dash-dotted black line), achieving machine precision by $N\approx17$.
\begin{figure}[t]
    \centering
    \begin{minipage}{0.48\textwidth}
        \centering
        \includegraphics[width=\linewidth]{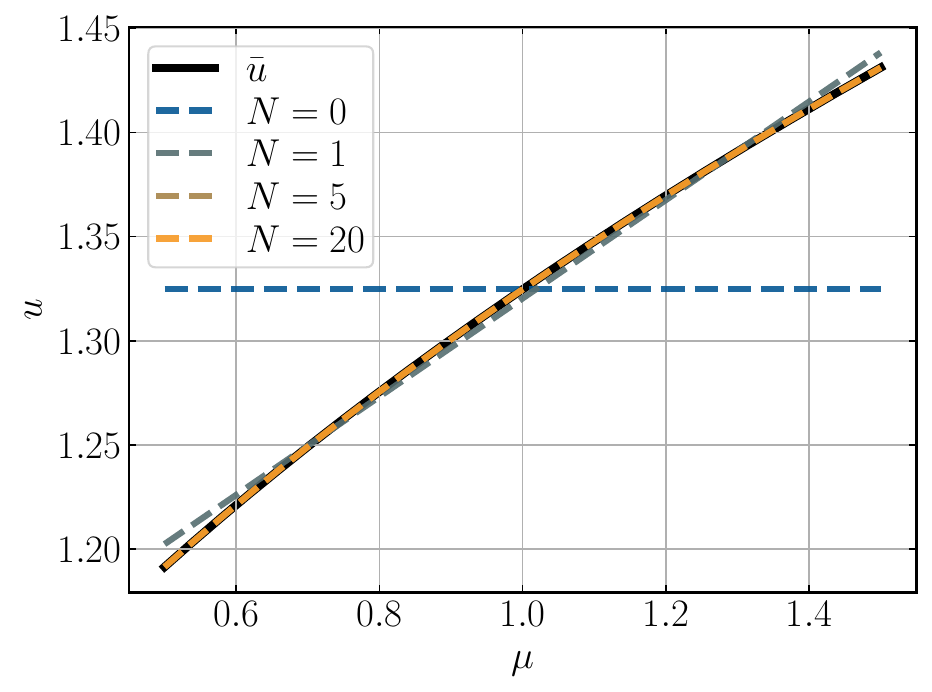}
    \end{minipage}\hfill
    \begin{minipage}{0.48\textwidth}
        \centering
        \includegraphics[width=\linewidth]{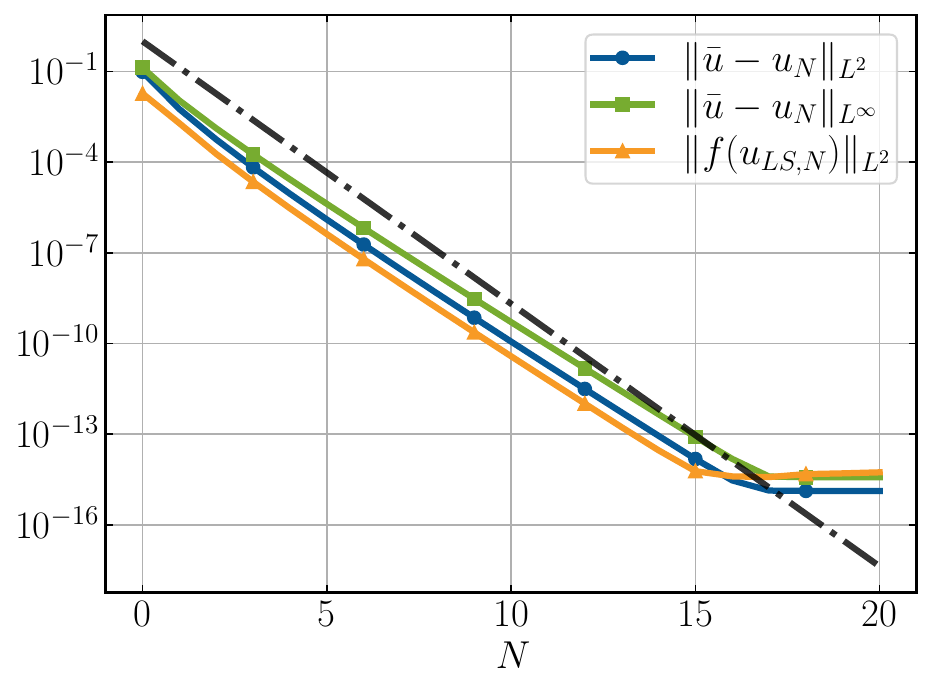}
    \end{minipage}
    \caption{PCE-Galerkin approximations (\emph{left}) and convergence analysis (\emph{right}) in a region of uniqueness, $I=[0.5, 1.5]$.}
    \label{fig:s_shaped_uniqueness}
\end{figure}

\subsection{Genetic Toggle Switch system}

To demonstrate the effectiveness of our framework in a more complex setting, we extend our validation beyond single-bifurcation scalar problems by considering the genetic toggle switch model~\cite{gardner2000}. This multi-dimensional system represents a bistable memory unit, consisting of two mutually inhibitory components whose dynamics depend fundamentally on cooperative non-linear repression and balanced synthesis rates~\cite{bollenbach2023}. 

Focusing on its mathematical properties, we consider a symmetric formulation of the classical continuous model, whose equations and steady states are defined by the vector field $\func{f}: \mathbb{R}^2 \times I \to \mathbb{R}^2$:
$$
\begin{cases}
    \dot{x} = -x + \frac{\mu}{1+y^2} \\
    \dot{y} = -y + \frac{\mu}{1+x^2}
\end{cases}
\quad\implies\quad \func{f}(\vect{u}, \mu) = \begin{pmatrix}
    -x + \frac{\mu}{1+y^2} \\
    -y + \frac{\mu}{1+x^2}
\end{pmatrix}.
$$
Here $\vect{u} = (x,y)^\top$ represents the vector of repressor concentrations, the quadratic rational terms model the mutual cooperative repression between the two genetic components, and we treat the effective transcription rate $\mu$ as a stochastic parameter $\mu(\xi) = \bar{\mu} + \sigma \xi$.
While the biological context restricts the transcription rate to $\mu \ge 0$, we extend our analysis to $I = [-6, 15]$ to demonstrate our method's capacity to handle multiple bifurcations.

To compute the equilibrium manifold, we set the vector field to zero and subtract the second equation from the first,  and obtain the following factorized equation $(x - y)(1 - xy) = 0$.
First, assuming $x = y$, the derivative of the cubic equation is strictly positive, thus the equation has exactly one real root $\bar{x}_0(\mu)$ for any $\mu \in \mathbb{R}$, defining the symmetric equilibrium branch $\bar{\func{u}}_0(\mu) = (\bar{x}_0(\mu), \bar{x}_0(\mu))$.

Second, assuming $x \neq y$ implies $xy = 1$, which, substituted back into the dynamics $\func{f}$ yields a quadratic equation providing the asymmetric equilibrium states $x_{\pm} = (\mu \pm \sqrt{\mu^2 - 4})/2$, that exist if and only if $|\mu| \ge 2$. Consequently, the system undergoes two pitchfork bifurcations at the critical values $\mu_\pm= \pm2$. Finally, noting that $1/x_{\pm} = x_{\mp}$, the solutions satisfy $y_{\pm} = x_{\mp}$, allowing us to define the two asymmetric branches as $\bar{\func{u}}_\pm(\mu) = (x_\pm(\mu), x_\mp(\mu))$.

To reconstruct the bifurcation diagram in an interval containing both bifurcations we apply the polynomial degree continuation Algorithm~\ref{alg:degree_continuation}, and observe that the non-linear solver tracks both the symmetric branch $\bar{\func{u}}_{0}$ and the asymmetric branches. Indeed, as illustrated in Figure~\ref{fig:toggle_pitchfork}, the branch-approximating solutions of the PCE-Galerkin system (shown as colored curves for $N=20$) successfully reconstruct the multi-dimensional double-pitchfork structure, precisely overlapping the true analytical branches (solid black lines). The first panel displays the projection of the bifurcation diagram in the $(\mu,x)$ plane, while the second panel represents the bifurcation in the full space. Notice that, due to the underlying system symmetry, the $x_{\pm}$ branches exactly mirror the $y_{\mp}$ branches.

Regarding the tracking of these solutions, it is worth noting that the parameter interval $I$ is partitioned by the bifurcations into three regions, characterized by three, one, and three steady states, respectively. Theoretically, this structure allows for up to nine distinct continuous branch combinations over the global domain. However, by starting the continuation algorithm at $N=0$ with the three roots of the deterministic system at $\bar{\mu}$, the Newton updates successfully isolate three of these nine branches, effectively reconstructing the multiple bifurcating structure.

\begin{figure}[t]
    \centering
    \begin{minipage}{0.48\textwidth}
        \centering
        \includegraphics[width=\linewidth]{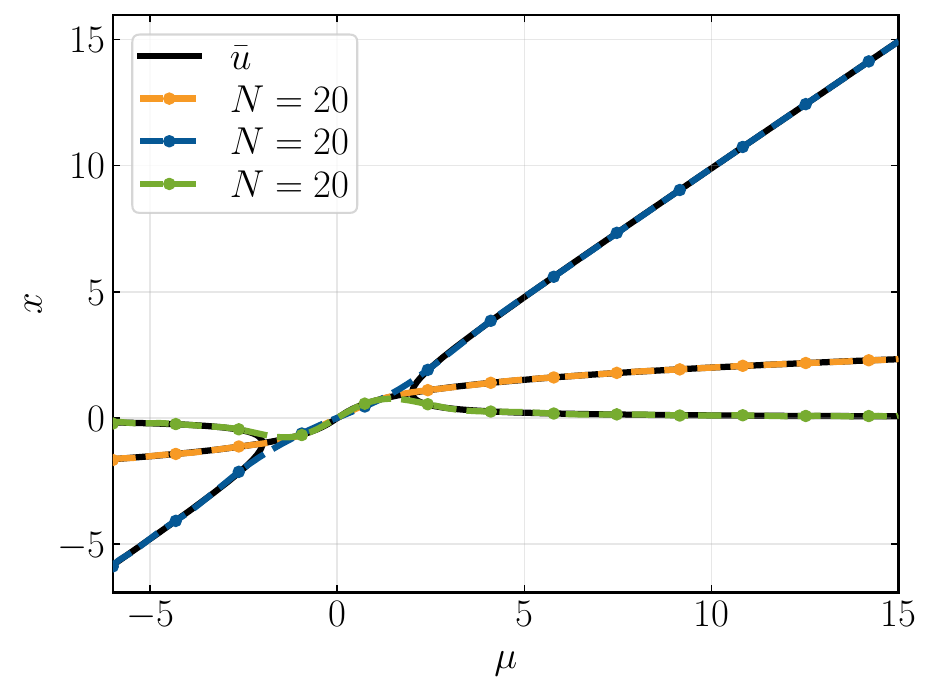}
    \end{minipage}\hfill
    \begin{minipage}{0.48\textwidth}
        \centering
        \includegraphics[width=\linewidth]{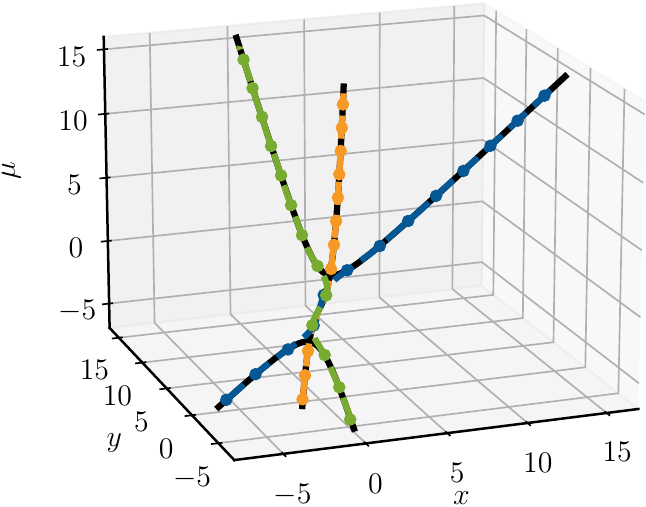}
    \end{minipage}

    \caption{PCE-Galerkin approximations in $I=[-6,15]$, with two pitchfork bifurcations.}
    \label{fig:toggle_pitchfork}
\end{figure}

Finally, we aim to verify the global uniqueness of the PCE-Galerkin approximation within parameter regimes where the continuous problem is uniquely solvable. Thus, let us verify the hypotheses of Theorem~\ref{thm:uniqueness} for any parameter compact interval contained within $I = (-2, 2)$. Fixing $\bar{\mu}$ and $\sigma$ such that $\bar{\mu}+\sigma z\in I$ for all $z \in I_\xi$, we compute the Jacobian matrix evaluated along the unique symmetric branch $\bar{\func{u}}_0$:
\begin{equation*}
    J(z) = D_{\vect{u}}\func{f}(\bar{\func{u}}_0(z), \mu(z)) = \begin{pmatrix}
    -1 & -k(z) \\
    -k(z) & -1
    \end{pmatrix}, \qquad \text{where } k(z) = \frac{2\mu(z) \bar{x}_0(z)}{(1+\bar{x}_0(z)^2)^2}.
\end{equation*}
By substituting the steady-state relation $\mu(z) = \bar{x}_0(z)(1+\bar{x}_0(z)^2)$ from the dynamics, the off-diagonal term simplifies to $k(z) = \frac{2\bar{x}_0(z)^2}{1+\bar{x}_0(z)^2}$. The eigenvalues $\lambda_{\pm} = -1 \pm k(z)$ of $J(z)$ are both negative, and since this matrix is symmetric, this directly proves negative definiteness of the matrix, bypassing the need to analyze its symmetric part or introduce weighted $P$-norms.

Using the relation $\mu(z)=\bar{x}_0(z)^3+\bar{x}_0(z)$, the bounds imposed on $\bar{x}_0(z)$ map to the parameter space, enforcing $\mu(z) \in (-2, 2)$. Therefore, for any compact interval contained within $I$, the maximum eigenvalue of the Jacobian remains negative and uniformly bounded away from zero, ensuring that the \emph{local monotonicity condition} is satisfied.

Regarding the remaining conditions—namely, uniform radial coercivity and the relative growth bound—Theorem~\ref{thm:uniqueness} provides sufficient hypotheses. Enforcing these analytical bounds for the toggle switch vector field would restrict the certified uniqueness region to a narrower sub-interval $J \subsetneq (-2, 2)$.
Nevertheless, we empirically observe that the numerical method behaves robustly across the entire interval $I = (-2,2)$. As before, the first panel of Figure~\ref{fig:toggle_unique} displays the $(\mu,x)$ plane, while the second panel represents the full space. The degree-continuation solver converges to the unique symmetric branch without encountering any spurious artifacts. This evidence suggests that the theorem's sufficient hypotheses are conservative for this particular system and could potentially be weakened, demonstrating that the practical basin of attraction of the PCE-Galerkin formulation extends well beyond the boundaries dictated by the strict theoretical analysis.

\begin{figure}[t]
    \centering
    \begin{minipage}{0.48\textwidth}
        \centering
        \includegraphics[width=\linewidth]{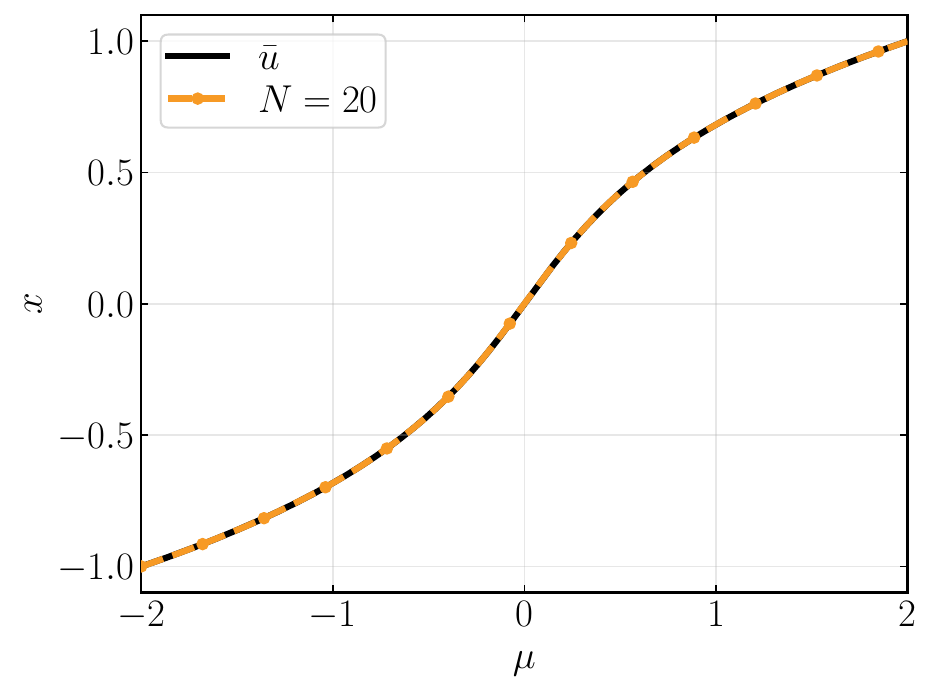}
    \end{minipage}\hfill
    \begin{minipage}{0.48\textwidth}
        \centering
        \includegraphics[width=\linewidth]{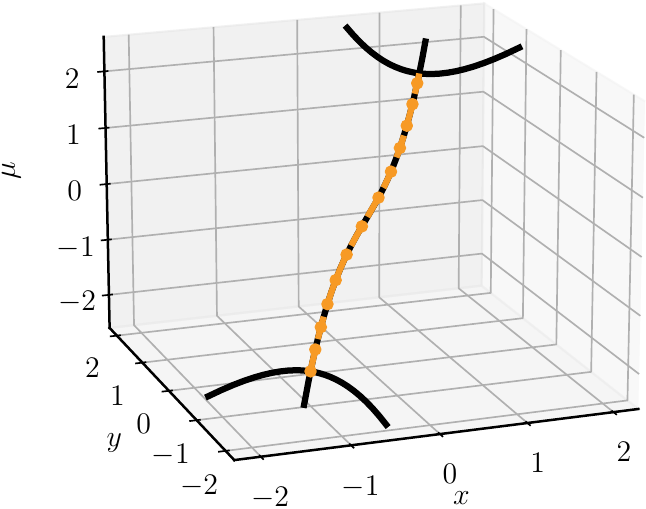}
    \end{minipage}

    \caption{PCE-Galerkin approximation in $I=[-2,2]$, with a  uniqueness regime.}
    \label{fig:toggle_unique}
\end{figure}

\subsection{Lorenz system}
In this section, we demonstrate the robust performance of the proposed framework for the multi-dimensional Lorenz system~\cite{lorenz1963deterministic, sparrow1982lorenz}, a well-known non-linear dynamics benchmark with complex bifurcation structures and chaotic regimes, which describes a simplified model for atmospheric convection. The parametrized system, in terms of the Prandtl number $\gamma$, the Rayleigh number $\rho$, and the geometric factor $\theta$, is given by:
$$
\begin{cases}
    \dot{x}=\gamma(y-x)\\
    \dot{y}=x(\rho-z)-y\\
    \dot{z}=xy-\theta z,
\end{cases} 
\quad\implies\quad
    \func{f}(\vect{u},\gamma,\rho,\theta)=\begin{pmatrix} 
    \gamma(y-x)\\ 
    x(\rho-z)-y\\ 
    xy-\theta z 
    \end{pmatrix}.
$$
where $\vect{u}=(x,y,z)^\top\in\mathbb{R}^3$, $x$ represents the intensity of the convective motion, $y$ and $z$ correspond to spatial temperature variations, and we find the steady-state manifolds by solving the non-linear equation $\func{f}(\vect{u},\rho)=\vect{0}$, where the vector field $\func{f}: \mathbb{R}^3\times I\to\mathbb{R}^3$. Here, we treat the Rayleigh parameter $\rho$, which dictates the instability of the system, as a stochastic input. We define $\rho(\xi)\coloneq\bar{\rho}+\sigma \xi$, while keeping $\gamma=10$ and $\theta=\frac{8}{3}$ as deterministic constants~\cite{lorenz1963deterministic, sparrow1982lorenz}.

We first investigate the behavior of the PCE-Galerkin bifurcating solutions for the problem. It is well-known that for $\rho<1$, the origin $\func{\bar{u}}_0(\rho)=(0,0,0)$ is the unique stable equilibrium. As the Rayleigh parameter crosses the critical value $\rho = 1$, the origin loses stability and the system undergoes a pitchfork bifurcation. Since the condition $\dot{x}=0$ confines the equilibria to the plane $x=y$, two equilibrium branches emerge as $\func{\bar{u}}_{\pm}(\rho) = \left( \pm \sqrt{\theta(\rho-1)}, \pm \sqrt{\theta(\rho-1)}, \rho-1 \right)$, with $\rho > 1$.

To numerically reconstruct the bifurcation diagram, we span the stochastic interval by setting $\bar{\rho} = \frac{3}{2}$ and $\sigma=\frac{1}{2\sqrt{3}}$, corresponding to the domain $I_\rho = [1.0, 2.0]$. By applying the degree continuation, the non-linear solver correctly tracks both the symmetric branch $\bar{\func{u}}_{0}$ and the asymmetric branches $\bar{\func{u}}_{\pm}$. As illustrated in the right panel of Figure~\ref{fig:lorenz_pitchfork}, which captures the three-solution regime, the branch-approximating solutions successfully reconstruct the multi-dimensional pitchfork structure. In this extended three-dimensional space $(\rho,x,y)$, the two asymmetric branches are clearly separated, exhibiting a symmetric configuration with respect to the origin in the $(x,y)$ plane for any given value of the parameter $\rho$.

\begin{figure}[t]
    \centering
    \begin{minipage}{0.48\textwidth}
        \centering
        \includegraphics[width=\linewidth]{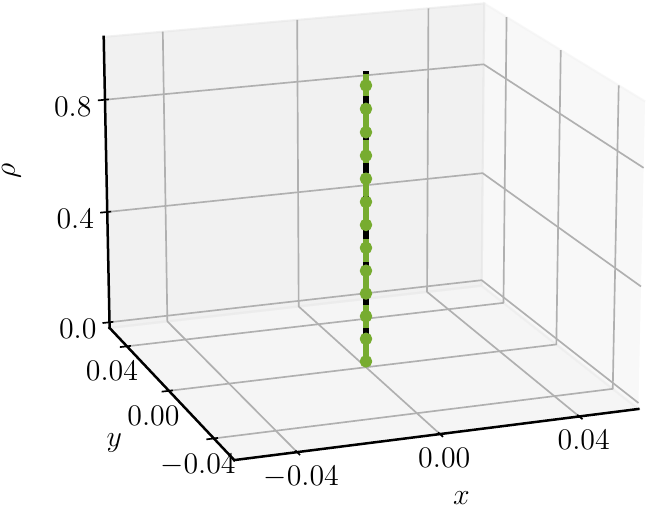}
    \end{minipage}
    \hfill
    \begin{minipage}{0.48\textwidth}
        \centering
        \includegraphics[width=\linewidth]{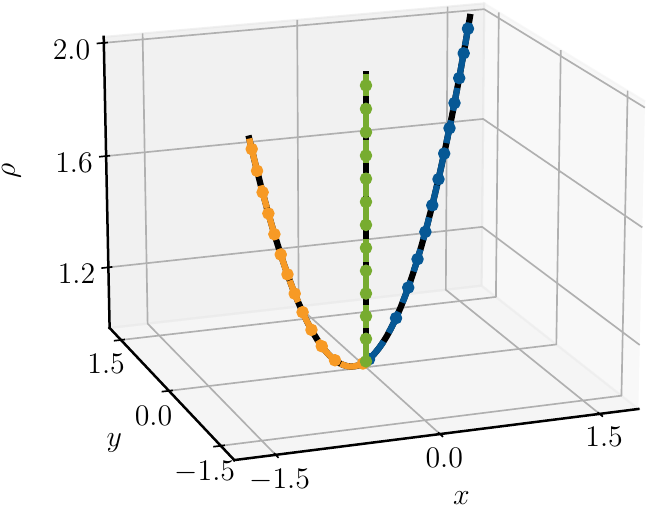}
    \end{minipage}
    \caption{PCE-Galerkin approximation for $N=20$ in the $(\rho,x,y)$ space, portraying the uniqueness regime (\emph{left}) and the three-solution regime (\emph{right}).}
    \label{fig:lorenz_pitchfork}
\end{figure}

Next, we shift our focus to the case where $\rho<1$, i.e., before the bifurcation occurs, aiming to apply the global uniqueness result established in Theorem~\ref{thm:uniqueness}. Due to the dissipative properties of the Lorenz system, we verify the reversed inequality conditions for inward-pointing vector fields.

Unlike the toggle switch model, the Jacobian of the Lorenz system is non-symmetric. Therefore, to satisfy the uniqueness hypotheses and verify that the PCE-Galerkin system exhibits a unique solution for sufficiently large $N$, we must equip the space $\mathbb{R}^3$ with an appropriately weighted inner product $\langle \vect{v}, \vect{w} \rangle_P \coloneq \vect{v}^\top P \vect{w}$, with $\|\vect{w}\|_P \coloneq \sqrt{\langle \vect{w}, \vect{w} \rangle_P}$, by defining the diagonal positive-definite (since $\gamma>0$) matrix $P = \text{diag}(\frac{1}{\gamma},1,1)$.
We assume that the entire parameter interval lies in the pre-bifurcation region, i.e.\ $\rho(z)<1$ for all $z \in I_\xi$. The computation of the Jacobian matrix evaluated at $\bar{\func{u}}_0=\vect{0}$ yields:
\begin{equation*}
    J(z) \coloneq D_{\vect{u}}\func{f}(\bar{\func{u}}_0(z), \rho(z))= \begin{pmatrix} 
    -\gamma & \gamma & 0 \\ 
    \rho(z) & -1 & 0 \\ 
    0 & 0 & -\theta 
    \end{pmatrix}.
\end{equation*}
Since for any square matrix, its quadratic form evaluates to zero over its skew-symmetric part, the negative definiteness condition for the quadratic form of the weighted inner product  $\langle J(z)\vect{w}, \vect{w} \rangle_P = \vect{w} (P J(z)) \vect{w}$ is governed exclusively by the symmetric part of the matrix product $P J(z)$, defined as $S(z) \coloneq \frac{1}{2}\left(P J(z) + (J(z) P)^\top\right)$. To understand if the eigenvalues of $S(z)$ are strictly negative, and thus the quadratic form is negative definite, we compute the product $P J(z)$ and its symmetric part $S(z)$ as:
\begin{equation*}
    P J(z) = 
    \begin{pmatrix} 
    -1 & 1 & 0 \\ 
    \rho(z) & -1 & 0 \\ 
    0 & 0 & -\theta 
    \end{pmatrix}, \qquad
    S(z) = \begin{pmatrix} 
    -1 & \frac{1+\rho(z)}{2} & 0 \\ 
    \frac{1+\rho(z)}{2} & -1 & 0 \\ 
    0 & 0 & -\theta 
    \end{pmatrix}.
\end{equation*}
The constraints on the eigenvalues are thus given by $-\theta<0$ and $-3<\rho(z)<1$. Since physical applications of the Lorenz model require $\rho\ge0$ and $\theta\ge0$, and the uniqueness regime holds true for $\rho<1$, these conditions are naturally met. Since $I_\xi$ is compact, there exists a uniform constant $\lambda_{\min}>0$ bounding the maximum eigenvalue of $S(z)$, so that the \emph{negative definiteness} condition in equation~\eqref{eq:uniform_negative_definiteness} is satisfied.

\begin{rmk}
The lower bound of $-3$ is not a limitation of the system, but rather depends on our choice of the matrix $P$. Given $r>0$ and defining a weight matrix $P_r \coloneq \mathrm{diag}(r, 1, 1)$, the top-left block of the new symmetric matrix $S_r(z)$ imposes the condition $4r\gamma > (r\gamma + \rho(z))^2$. The roots of this parabola guarantee negative definiteness for any parameter $\rho(z)$ in any interval strictly contained in
$I_r\coloneq[-r\gamma - 2\sqrt{r\gamma}, -r\gamma + 2\sqrt{r\gamma}]$.
Setting $r=1/\gamma$ maximizes the upper bound, which is the bifurcation point $\rho=1$, effectively recovering our initial matrix $P$. Conversely, increasing $r$ allows the stability region to extend arbitrarily into the negative domain $\rho<0$, ensuring that a valid positive-definite matrix $P_r$ exists also for ranges in the negative domain.
\end{rmk}

To satisfy the hypotheses, it remains to verify the  conditions of uniform inward radial coercivity and relative growth bound. We begin by evaluating the radial  projection of the non-linear vector field. Let $\varrho > 0$ be an arbitrary radius. For any $\vect{u} \in \mathbb{R}^3$, we compute the weighted inner product:
\begin{align*}
    \langle \func{f}(\vect{u}, \rho(z)), \vect{u} \rangle_P &= \frac{1}{\gamma} x\big(\gamma(y-x)\big) + y\big(\rho(z) x - y - xz\big) + z\big(xy - \theta z\big) \\
    &= -x^2 + xy + \rho(z) xy - y^2 - xyz + xyz - \theta z^2\\
    &= -x^2 + (1+\rho(z))xy - y^2 - \theta z^2 \\
    &= \vect{u} S(z) \vect{u},
\end{align*}
which corresponds to the quadratic form of the symmetric matrix $S(z)$ evaluated previously. Thus, applying the bound from negative definiteness, we obtain:
\begin{equation*}
    \langle \func{f}(\vect{u}, \rho(z)), \vect{u} \rangle_P \leq -\lambda_{\min} \|\vect{u}\|_P^2.
\end{equation*}
For vectors such that $\|\vect{u}\|_P \ge \varrho$, we can decompose $\|\vect{u}\|_P^2 = \|\vect{u}\|_P \|\vect{u}\|_P \ge \varrho \|\vect{u}\|_P$. By defining $m_\varrho \coloneq \lambda_{\min} \varrho>0$, the \emph{uniform inward radial coercivity} condition is strictly satisfied:
\begin{equation*}
    \langle \func{f}(\vect{u}, \rho(z)), \vect{u} \rangle_P \leq -m_\varrho \|\vect{u}\|_P.
\end{equation*}
Finally, we must prove that the total magnitude of the vector field $\|\func{f}\|_P$ is bounded by its radial projection. The field can be split into its linear and non-linear components:
$$
\func{f}(\vect{u}, \rho(z)) = J(z)\vect{u} + \func{f}_{\text{nl}}(\vect{u}),\qquad \func{f}_{\text{nl}}(\vect{u}) = (0, -xz, xy).
$$
Given $\|\vect{u}\|_P^2=\frac{1}{\gamma}x^2+y^2+z^2\ge \varrho$, $\|\vect{u}\|_P\ge \varrho$ and the triangle inequality, we obtain:
\begin{align*}
    \|\func{f}(\vect{u}, \rho(z))\|_P &\leq \|J(z)\|_P \|\vect{u}\|_P + \|\func{f}_{\text{nl}}(\vect{u})\|_P\\
    &= \|J(z)\|_P \|\vect{u}\|_P + \sqrt{x^2(y^2+z^2)}\\
    &\leq \|J(z)\|_P \|\vect{u}\|_P + \sqrt{\gamma \|\vect{u}\|_P^2 \|\vect{u}\|_P^2}\\
    &\leq \frac{\|J(z)\|_P}{\varrho} \|\vect{u}\|_P^2 + \sqrt{\gamma}\|\vect{u}\|_P^2 \\
    &= \left(\frac{\|J(z)\|_P}{\varrho} + \sqrt{\gamma}\right)\|\vect{u}\|_P^2\\
    &\leq - \left( \frac{\|J(z)\|_P + \varrho\sqrt{\gamma}}{\varrho \lambda_{\min}} \right) \langle \func{f}(\vect{u}, \rho(z)), \vect{u} \rangle_P,\\
\end{align*}
where in the last inequality we used the lower bound established by the radial coercivity step.
Taking the maximum of the matrix norm over the compact set $I_\xi$, we define the uniform constant $C_\varrho \coloneq \max_{z \in I_\xi} \frac{\|J(z)\|_P + \varrho\sqrt{\gamma}}{\varrho \lambda_{\min}}>0$, and the \emph{relative growth bound} is satisfied. 

Since all hypotheses of the uniqueness Theorem~\ref{thm:uniqueness} are satisfied under the $P$-weighted metric, there exists a sufficiently large $\bar{N}$ such that, if $N\geq\bar{N}$, the PCE-Galerkin problem associated with the stochastic Lorenz system admits a unique solution. As shown in the left panel of Figure~\ref{fig:lorenz_pitchfork} for the pre-bifurcation regime $\rho \in [0,1]$, the solver reliably computes the unique approximation of the exact solution $\bar{\func{u}}_0$, confirming that the numerical projection remains immune to numerical artifacts and spurious roots.

\subsection{Spherical surface of Steady States}
As a final scenario, we investigate the robustness of the PCE-Galerkin framework when applied to a highly degenerate system. Unlike the previous examples characterized by a finite set of isolated branches, we consider a planar system exhibiting a two-dimensional manifold of steady states:
\begin{equation}
\begin{cases}
    \dot{x}=(x - \mu^3)(1 - x^2 - y^2 - \mu^2)\\
    \dot{y}=y(1 - x^2 - y^2 - \mu^2) 
\end{cases} 
\quad\implies\quad
    \func{f}(\vect{u},\mu) = \begin{pmatrix} 
    (x - \mu^3)(1 - x^2 - y^2 - \mu^2) \\ 
    y(1 - x^2 - y^2 - \mu^2) 
    \end{pmatrix},
    \label{eq:spherical_system}
\end{equation}
where $\vect{u} = (x,y)^\top$ and $\mu = \bar{\mu} + \sigma\xi$ acts as the stochastic parameter.

By setting the vector field to zero, we identify two distinct sets of equilibria. The first set is derived from the linear factors, yielding an isolated analytical branch $\bar{\func{u}}_0(\mu) = (\mu^3, 0)$, which can be captured exactly for any expansion degree $N \ge 3$. The second set emerges from the roots of the quadratic factor $1 - x^2 - y^2 - \mu^2 = 0$. Within the parametric domain $I \subset [-1, 1]$, this equation defines a continuum of steady states forming a perfect spherical surface of parameter-dependent radius $R(\mu) = \sqrt{1 - \mu^2}$ in the extended space $(x, y, \mu)$.

\subsubsection{Random Initialization}
Applying a non-linear solver directly to a high expansion degree $N$ without progressive continuation reveals the vast number of solutions generated by the PCE-Galerkin system. 
As illustrated in Figure~\ref{fig:spherical_system_random}, where the left panel displays the projection in the $(\mu,x)$ plane and the right panel shows the full $(\mu,x,y)$ space, these polynomial curves spread across the spherical manifold without concentrating on any privileged zones. 
They sometimes oscillate between the spherical surface, whose boundaries are outlined by gray lines, and the cubic branch, represented by the solid black line.
Although these solutions exhibit oscillatory behavior and do not act as branch-approximating curves, they remain highly informative regarding the topology of the equilibrium manifold.
Since the underlying equilibrium manifold is a continuous two-dimensional surface, this ensemble of distinct algebraic roots can be interpreted as a cloud of numerical samples outlining the true geometric shape of the sphere. 
Even without a continuation path, direct random initializations successfully recover the overall topology and boundaries of the exact steady state surface.

\begin{figure}[t]
    \centering
    \begin{minipage}{0.4\textwidth}
        \includegraphics[width=\linewidth]{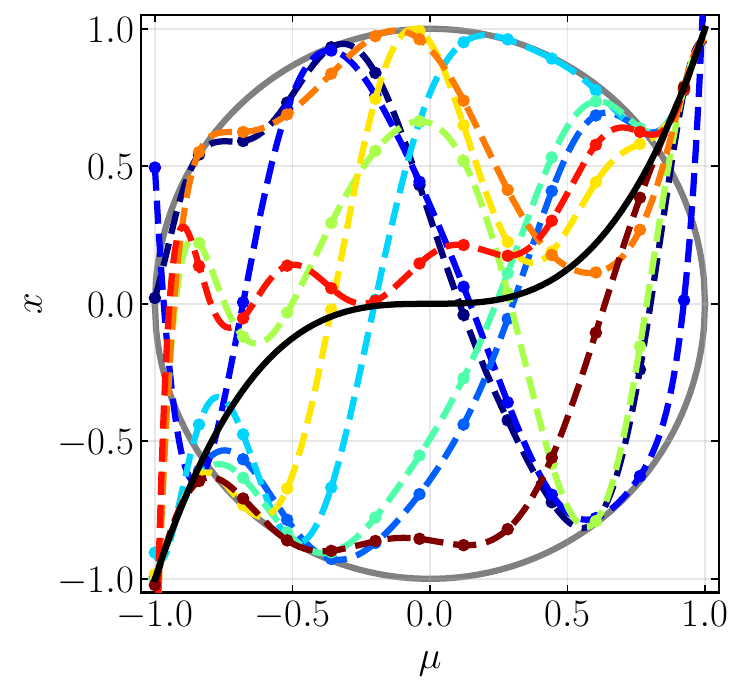}
    \end{minipage}\hfill
    \begin{minipage}{0.48\textwidth}
        \includegraphics[width=\linewidth]{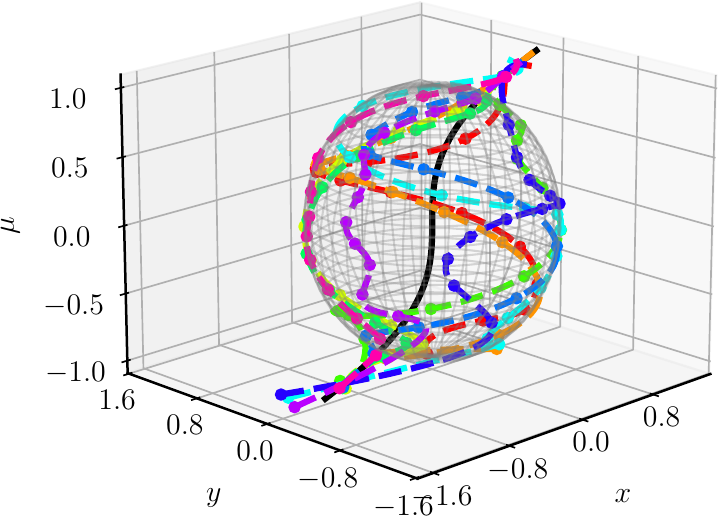}
    \end{minipage}
    \caption{Random initializations for $N=10$ and $I=[-1,1]$.}
    \label{fig:spherical_system_random}
\end{figure}

\subsubsection{Degree Continuation}
We now investigate the behavior of the branch-approximating solutions when applying the degree continuation detailed in Algorithm~\ref{alg:degree_continuation}, consistent with the approach used in the previous sections. At the baseline step $N=0$, the system evaluates the expected value of the vector field. In this specific example, numerical experiments reveal a structural instability already at this baseline level $N=0$.

Through our tests, we observe that once the solver is initialized with random coefficients, the solutions do not reliably remain uniformly distributed on the circular section. Instead, the computed roots empirically tend to collapse onto the $x$-axis, where $y=0$. This specific directional flattening is driven by the structure of the system, where the second equation explicitly vanishes for $y=0$. This creates a numerical preference that drives the trajectories toward the horizontal axis. 

This phenomenon appears to depend on the parameter interval width, as illustrated by the sequence in Figure~\ref{fig:spherical_system_continuation}. For narrow domains, e.g.\ $I=[-0.1,0.1]$ (left panel), the perturbation effect is minor and the algorithm successfully perceives a uniformly distributed continuous ring. However, as the interval widens (e.g.\ $I=[-0.3,0.3]$, middle panel), numerical results show that the random initializations are forced to cluster at the extreme points along the tracks. For even wider uncertainty intervals, ranging from $I=[-0.5, 0.5]$ up to $I=[-1, 1]$ (right panel), we empirically observe that the non-linear solver fails to converge to more than three distinct solutions, all of which strictly collapse onto the $x$-axis.

\begin{figure}[t]
    \centering
    \begin{minipage}{0.32\textwidth}
        \centering
        \includegraphics[width=\linewidth]{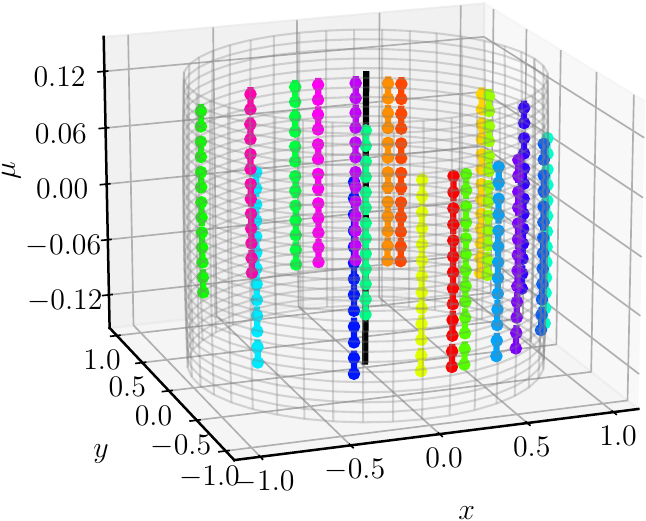}
    \end{minipage}\hfill
    \begin{minipage}{0.32\textwidth}
        \centering
        \includegraphics[width=\linewidth]{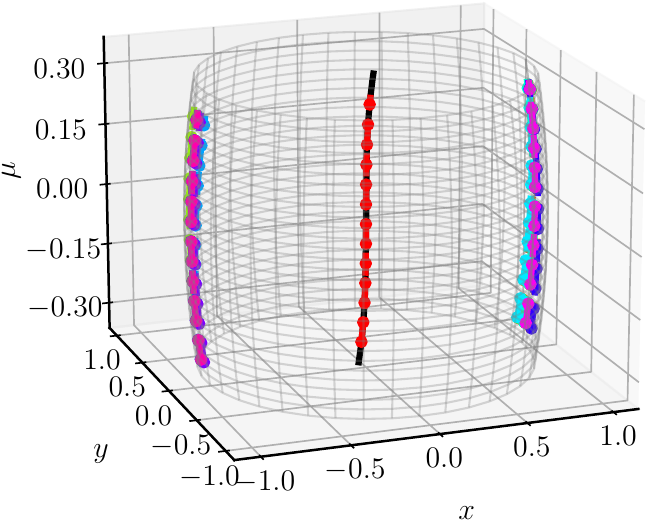}
    \end{minipage}\hfill
    \begin{minipage}{0.32\textwidth}
        \centering
        \includegraphics[width=\linewidth]{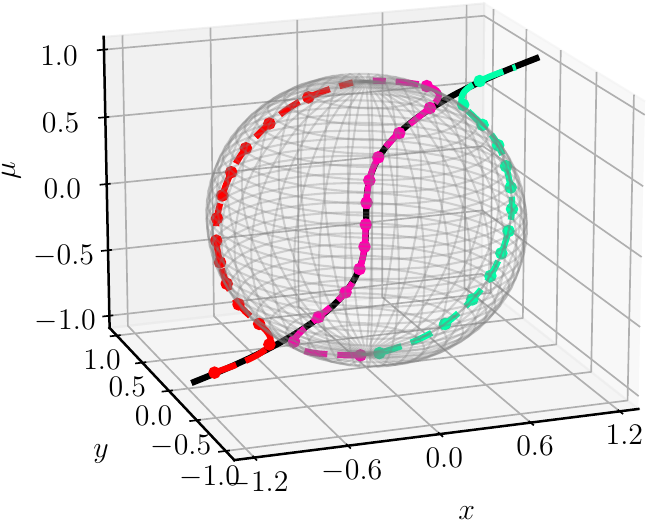}
    \end{minipage}\hfill
    \caption{PCE-Galerkin degree continuation across domains of increasing width.}
    \label{fig:spherical_system_continuation}
\end{figure}

By testing a simplified version of the system, removing the linear and cubic multipliers and reducing the field to $\func{f}(\vect{u},\mu) = (1 - x^2 - y^2 - \mu^2, 1 - x^2 - y^2 - \mu^2)^\top$, numerical experiments show that the degree-continuation algorithm tracks the solutions across wide intervals. The random initializations follow the curvature of the sphere and remain uniformly distributed across the entire surface. This confirms that the manifold is captured by the PCE-Galerkin framework, and that the previous flattening toward $y=0$ is induced by the coordinate-specific action of the cubic coupling term.
\section{Conclusions and Perspectives}\label{sec:conclusions}
In this work, we have proposed and analyzed a global reconstruction framework for the steady-state manifolds of parameter-dependent dynamical systems. 
By shifting the perspective from local, point-wise continuation methods to a global weak formulation based on Polynomial Chaos Expansion (PCE), we demonstrated that it is possible to recover entire deterministic solution branches simultaneously across the parameter space. 

In particular, our theoretical investigation established the consistency and well-posedness of the PC Galerkin projection. 
We proved that the discrete system successfully inherits the qualitative features of the underlying solution (such as non-existence and uniqueness) provided that specific geometric constraints, namely uniform radial coercivity and local monotonicity, are satisfied. A crucial result of our analysis is the rigorous characterization of convergence rates: while optimal spectral accuracy is achieved in smooth regions of the equilibrium manifold, we also provided theoretical bounds for the algebraic decay observed in the proximity of bifurcation singularities, where the loss of Sobolev regularity limits the polynomial approximation.

From a numerical standpoint, the proposed degree-continuation strategy addresses the inherent challenge of algebraic root proliferation in the Galerkin system. By leveraging the hierarchical nature of the polynomial basis, we successfully isolated the \emph{branch-approximating} solutions from the much larger set of oscillating roots. The robustness of this approach was validated through a diverse collection of examples, ranging from classic normal forms to complex multidimensional systems of practical interest, such as the genetic toggle switch and the chaotic Lorenz attractor. 
Furthermore, the analysis of a highly degenerate continuous spherical manifold highlighted the framework's ability to sample infinite sets of steady states.

In this work, we demonstrate that the global nature of the PCE approach offers significant advantages over traditional sequential algorithms, particularly in providing a functional and efficient surrogate model for many-query sensitivity analysis and uncertainty quantification. 
However, we also highlight that the dimension of the resulting non-linear algebraic system grows rapidly with the polynomial degree, requiring robust solvers or more advanced strategies to track the branches. 
This suggests that the method's scalability could be enhanced, for example by employing multi-element PCE~\cite{wan2005multi}, which partitions the parameter domain to isolate steep gradients. 
On the other hand, the computational bottleneck associated with the high-dimensional Galerkin projection could be mitigated through ad-hoc preconditioning strategies or adaptive refinement algorithms~\cite{PichiDeflationbasedCertifiedGreedy2025a}.

Future research directions will primarily focus on extending to Partial Differential Equations (PDEs) the theoretical framework developed in this work for ODEs. 
Furthermore, the global approximations obtained via PCE could be coupled with traditional local continuation methods, leveraging the evaluated polynomials as highly accurate initializations to identify the branches in challenging regions of the parameter space.
Finally, a systematic investigation of the analytical properties of the \emph{oscillating solutions} represents a highly promising mathematical frontier. Indeed, as observed for the spherical manifold, these roots can provide deeper insights into the detection of hidden bifurcating structures or the topology of the equilibrium manifold.

\bibliographystyle{abbrv}
\bibliography{bib}

@book{wiggins2003,
  author    = {Stephen Wiggins},
  title     = {Introduction to Applied Nonlinear Dynamical Systems and Chaos},
  edition   = {2nd},
  publisher = {Springer, New York},
  year      = {2003},
  address   = {New York}
}

@article{debusschere2005,
  author  = {Bert J. Debusschere and Habib N. Najm and Philippe P. Pébay and Omar M. Knio
             and Roger G. Ghanem and Olivier P. Le Maître},
  title   = {Numerical challenges in the use of Polynomial Chaos representations
             for stochastic processes},
  journal = {SIAM Journal on Scientific Computing},
  volume  = {26},
  number  = {2},
  pages   = {698--719},
  year    = {2005},
  doi     = {10.1137/S1064827503427741}
}

@book{xiu2010,
  author    = {Dongbin Xiu},
  title     = {Numerical methods for stochastic computations:
               A spectral method approach},
  publisher = {Princeton University Press},
  year      = {2010},
  address   = {Princeton, NJ}
}

@book{boyd2001chebyshev,
  title={Chebyshev and Fourier Spectral Methods},
  author={John P. Boyd},
  year={2001},
  publisher={Dover Publications}
}

@book{canuto2006spectral,
  title={Spectral Methods: Fundamentals in Single Domains},
  author={Claudio Canuto and M.Y. Hussaini and Alfio Quarteroni and Thomas A. Zang},
  year={2006},
  publisher={Springer}
}

@unpublished{gonnella2024stochastic,
  author    = {Gonnella, Isabella Carla and Khamlich, Moaad and Pichi, Federico and Rozza, Gianluigi},
  title     = {A Stochastic Perturbation Approach to Nonlinear Bifurcating Problems},
  note      = {Preprint, arXiv:2402.16803v4},
  year      = {2024},
}

@book{kantorovich1964functional,
  author    = {Kantorovich, Leonid V. and Akilov, Gleb P.},
  title     = {Functional Analysis in Normed Spaces},
  publisher = {Pergamon Press},
  address   = {Oxford},
  year      = {1964},
  note      = {Translated from Russian (1959)}
}

@article{ortega1968newton,
  author    = {Ortega, James M.},
  title     = {The Newton–Kantorovich Theorem},
  journal   = {The American Mathematical Monthly},
  volume    = {75},
  number    = {6},
  pages     = {658--660},
  year      = {1968},
  doi       = {10.2307/2313800},
  note      = {Short note stating the Newton–Kantorovich theorem}  
}

@book{rudin1991functional,
  author    = {Rudin, Walter},
  title     = {Functional Analysis},
  publisher = {McGraw-Hill},
  year      = {1991},
  edition   = {2}
}

@book{trefethen2013approximation,
  author    = {Trefethen, Lloyd N.},
  title     = {Approximation Theory and Approximation Practice},
  publisher = {SIAM},
  year      = {2013},
  address   = {Philadelphia},
  edition   = {1}
}

@book{Rozza2014,
  author       = {Quarteroni, Alfio and Rozza, Gianluigi},
  title        = {Reduced order methods for modeling and computational reduction},
  publisher    = {Springer},
  year         = {2014},
  doi          = {10.1007/978-3-319-02090-7}
}

@Book{Seydel2010,
  author    = {R{\"u}diger Seydel},
  title     = {Practical Bifurcation and Stability Analysis},
  edition   = {3},
  series    = {Interdisciplinary Applied Mathematics},
  volume    = {5},
  publisher = {Springer, New York, NY},
  year      = {2010},
  isbn      = {978-1-4419-1739-3},
  doi       = {10.1007/978-1-4419-1740-9},
}

@book{Kuznetsov2023,
  author       = {Yuri A. Kuznetsov},
  title        = {Elements of Applied Bifurcation Theory},
  edition      = {4},
  series       = {Applied Mathematical Sciences},
  volume       = {112},
  publisher    = {Springer, Cham / Switzerland},
  year         = {2023},
  isbn         = {978-3-031-22006-7},
  doi          = {10.1007/978-3-031-22007-4},
}

@article{pintore2020efficient,
  author  = {Moreno Pintore and Federico Pichi and Martin W. Hess and Gianluigi Rozza and Claudio Canuto},
  title   = {Efficient computation of bifurcation diagrams with a deflated approach to reduced basis spectral element method},
  journal = {Advances in Computational Mathematics},
  volume  = {47},
  number  = {1},
  pages   = {1--39},
  year    = {2021},
  doi     = {10.1007/s10444-020-09827-6}
}

@book{QuarteroniReducedBasisMethods2016,
  title = {Reduced {{Basis Methods}} for {{Partial Differential Equations}}: {{An Introduction}}},
  shorttitle = {Reduced {{Basis Methods}} for {{Partial Differential Equations}}},
  author = {Quarteroni, Alfio and Manzoni, Andrea and Negri, Federico},
  year = {2016},
  series = {La {{Matematica}} per Il 3+2, 92},
  edition = {1st ed. 2016.},
  publisher = {Springer International Publishing},
  address = {Cham},
  doi = {10.1007/978-3-319-15431-2},
  isbn = {978-3-319-15431-2}
}

@article{KuehnUncertaintyQuantificationAnalysis2024,
  title = {Uncertainty Quantification Analysis of Bifurcations of the {{Allen}}--{{Cahn}} Equation with Random Coefficients},
  author = {Kuehn, Christian and Piazzola, Chiara and Ullmann, Elisabeth},
  year = {2024},
  journal = {Physica D: Nonlinear Phenomena},
  volume = {470},
  pages = {134390},
  doi = {10.1016/j.physd.2024.134390}
}

@article{PichiArtificialNeuralNetwork2023,
  title = {An Artificial Neural Network Approach to Bifurcating Phenomena in Computational Fluid Dynamics},
  author = {Pichi, Federico and Ballarin, Francesco and Rozza, Gianluigi and Hesthaven, Jan S.},
  year = {2023},
  journal = {Computers \& Fluids},
  volume = {254},
  pages = {105813},
  doi = {10.1016/j.compfluid.2023.105813}
}

@article{PichiDeflationbasedCertifiedGreedy2025a,
  title = {Deflation-Based Certified Greedy Algorithm and Adaptivity for Bifurcating Nonlinear {{PDEs}}},
  author = {Pichi, Federico and Strazzullo, Maria},
  year = {2025},
  journal = {Communications in Nonlinear Science and Numerical Simulation},
  volume = {149},
  pages = {108941},
  doi = {10.1016/j.cnsns.2025.108941}
}

@article{TomadaSparseIdentificationBifurcating2025,
  title = {Sparse {{Identification}} for Bifurcating Phenomena in {{Computational Fluid Dynamics}}},
  author = {Tomada, Lorenzo and Khamlich, Moaad and Pichi, Federico and Rozza, Gianluigi},
  year = 2025,
  journal = {Computers \& Fluids},
  volume = {302},
  pages = {106841},
  doi = {10.1016/j.compfluid.2025.106841},
  urldate = {2025-10-20}
}

@book{Fulton1998,
  author    = {Fulton, William},
  title     = {Intersection Theory},
  publisher = {Springer-Verlag},
  year      = {1998},
  edition   = {2nd},
  series    = {Ergebnisse der Mathematik und ihrer Grenzgebiete. 3. Folge},
  doi       = {10.1007/978-1-4612-1700-8},
  isbn      = {978-0-387-98549-7}
}

@book{SommeseWampler2005,
  author    = {Sommese, Andrew J. and Wampler, Charles W.},
  title     = {The Numerical Solution of Systems of Polynomials Arising in Engineering and Science},
  publisher = {World Scientific},
  year      = {2005},
  address   = {Singapore},
  doi       = {10.1142/5763},
  isbn      = {978-981-256-040-7}
}

@article{wang2021how,
  title={How much faster does the best polynomial approximation converge than Legendre projection?},
  author={Wang, Haiyong},
  journal={Constructive Approximation},
  volume={54},
  number={3},
  pages={481--504},
  year={2021},
  publisher={Springer}
}

@book{shen2011spectral,
  title={Spectral Methods: Algorithms, Analysis and Applications},
  author={Shen, Jie and Tang, Tao and Wang, Li-Lian},
  volume={41},
  series={Springer Series in Computational Mathematics},
  year={2011},
  publisher={Springer-Verlag},
  address={Berlin Heidelberg}
}

@book{adams2003sobolev,
  title={Sobolev Spaces},
  author={Adams, Robert A. and Fournier, John J. F.},
  edition={2},
  volume={140},
  series={Pure and Applied Mathematics},
  year={2003},
  publisher={Academic Press},
  address={New York}
}

@book{brenner2008mathematical,
  title={The Mathematical Theory of Finite Element Methods},
  author={Brenner, Susanne C. and Scott, L. Ridgway},
  edition={3},
  series={Texts in Applied Mathematics},
  volume={15},
  year={2008},
  publisher={Springer},
  address={New York}
}

@book{bergh1976interpolation,
  title={Interpolation Spaces: An Introduction},
  author={Bergh, J{\"o}ran and L{\"o}fstr{\"o}m, J{\"o}rgen},
  series={Grundlehren der mathematischen Wissenschaften},
  volume={223},
  year={1976},
  publisher={Springer-Verlag},
  address={Berlin Heidelberg}
}

@book{tartar2007sobolev,
  title={An Introduction to Sobolev Spaces and Interpolation Spaces},
  author={Tartar, Luc},
  series={Lecture Notes of the Unione Matematica Italiana},
  volume={3},
  year={2007},
  publisher={Springer},
  address={Berlin Heidelberg}
}

@book{devore1993constructive,
  title={Constructive Approximation},
  author={DeVore, Ronald A. and Lorentz, George G.},
  volume={303},
  series={Grundlehren der mathematischen Wissenschaften},
  year={1993},
  publisher={Springer-Verlag},
  address={Berlin, Heidelberg}
}

@book{milovanovic1994topics,
  title={Topics in Polynomials: Extremal Problems, Inequalities, Zeros},
  author={Milovanovi{\'c}, Gradimir V. and Mitrinovi{\'c}, Dragoslav S. and Rassias, Themistocles M.},
  year={1994},
  publisher={World Scientific},
  address={Singapore}
}

@book{allgower1990numerical,
  title={Numerical Continuation Methods: An Introduction},
  author={Allgower, Eugene L and Georg, Kurt},
  volume={13},
  series={Springer Series in Computational Mathematics},
  year={1990},
  publisher={Springer-Verlag Berlin Heidelberg}
}

@book{dini1878,
  title={Fondamenti per la teorica delle funzioni di variabili reali},
  author={Dini, Ulisse},
  year={1878},
  publisher={T. Nistri},
  address={Pisa}
}

@book{rudin1976,
  title={Principles of Mathematical Analysis},
  author={Rudin, Walter},
  edition={3rd},
  year={1976},
  publisher={McGraw-Hill},
  address={New York}
}

@book{appell1990nonlinear,
  title={Nonlinear Superposition Operators},
  author={Appell, J{\"u}rgen and Zabrejko, Petr P.},
  volume={95},
  year={1990},
  publisher={Cambridge University Press},
  address={Cambridge},
  isbn={0521361028}
}

@article{cameron1947orthogonal,
  title={The orthogonal development of non-linear functionals in series of Fourier-Hermite functionals},
  author={Cameron, Robert H and Martin, William T},
  journal={Annals of Mathematics},
  volume={48},
  pages={385--413},
  year={1947},
  publisher={JSTOR}
}

@book{ghanem1991stochastic,
  title={Stochastic Finite Elements: A Spectral Approach},
  author={Ghanem, Roger G and Spanos, Pol D},
  year={1991},
  publisher={Springer},
  address={New York, NY}
}

@article{xiu2002wiener,
  title={The Wiener-Askey polynomial chaos for stochastic differential equations},
  author={Xiu, Dongbin and Karniadakis, George Em},
  journal={SIAM Journal on Scientific Computing},
  volume={24},
  number={2},
  pages={619--644},
  year={2002},
  publisher={SIAM}
}

@article{xiu2003modeling,
  title={Modeling uncertainty in flow simulations via generalized polynomial chaos},
  author={Xiu, Dongbin and Karniadakis, George Em},
  journal={Journal of Computational Physics},
  volume={187},
  number={1},
  pages={137--167},
  year={2003},
  publisher={Elsevier}
}

@article{brezis2018gagliardo,
  title={Gagliardo--Nirenberg inequalities and non-inequalities: The full story},
  author={Brezis, Ha{\"\i}m and Mironescu, Petru},
  journal={Annales de l'Institut Henri Poincar{\'e} C},
  volume={35},
  number={5},
  pages={1355--1376},
  year={2018},
  publisher={Elsevier},
  doi={10.1016/j.anihpc.2017.11.007}
}

@book{cartan1971differential,
  title={Differential Calculus},
  author={Cartan, Henri},
  year={1971},
  publisher={Hermann / Houghton Mifflin},
  address={Paris / Boston}
}

@article{gottlieb1997gibbs,
  title={On the Gibbs phenomenon and its resolution},
  author={Gottlieb, David and Shu, Chi-Wang},
  journal={SIAM Review},
  volume={39},
  number={4},
  pages={644--668},
  year={1997},
  publisher={SIAM}
}

@book{hesthaven2007spectral,
  title={Spectral Methods for Time-Dependent Problems},
  author={Hesthaven, Jan S and Gottlieb, Sigal and Gottlieb, David},
  year={2007},
  publisher={Cambridge University Press},
  address={Cambridge}
}

@article{lorenz1963deterministic,
  title={Deterministic nonperiodic flow},
  author={Lorenz, Edward N},
  journal={Journal of the Atmospheric Sciences},
  volume={20},
  number={2},
  pages={130--141},
  year={1963}
}

@book{sparrow1982lorenz,
  title={The Lorenz Equations: Bifurcations, Chaos, and Strange Attractors},
  author={Sparrow, Colin},
  year={1982},
  publisher={Springer-Verlag},
  address={New York}
}

@article{gardner2000,
  title={Construction of a genetic toggle switch in Escherichia coli},
  author={Gardner, Timothy S and Cantor, Charles R and Collins, James J},
  journal={Nature},
  volume={403},
  number={6767},
  pages={339--342},
  year={2000},
  publisher={Nature Publishing Group}
}

@manual{bollenbach2023,
  title={Genetic toggle switch},
  author={Bollenbach, T.},
  organization={Advanced Practical Course M Biophysics, University of Cologne},
  year={2023}
}

@book{arnold1992ordinary,
  title={Ordinary Differential Equations},
  author={Arnold, Vladimir I.},
  year={1992},
  publisher={Springer-Verlag},
  address={Berlin, Heidelberg},
  edition={3}
}

@article{wan2005multi,
  title={Multi-element generalized polynomial chaos for arbitrary probability measures},
  author={Wan, Xiaoliang and Karniadakis, George Em},
  journal={SIAM Journal on Scientific Computing},
  volume={28},
  number={3},
  pages={901--928},
  year={2006},
  publisher={SIAM}
}

@unpublished{KumarBifurcationCurveDetection2026,
  title = {Bifurcation Curve Detection with Deflation for Multiparametric {{PDEs}}},
  author = {Kumar, Nitin and Pichi, Federico and Rozza, Gianluigi},
  year = 2026,
  number = {arXiv:2602.12940},
  eprint = {2602.12940},
  primaryclass = {math},
  publisher = {arXiv},
  doi = {10.48550/arXiv.2602.12940},
  note = {Preprint, arXiv:2602.12940},

  archiveprefix = {arXiv}
}

@article{PichiDrivingBifurcatingParametrized2022,
  title = {Driving Bifurcating Parametrized Nonlinear {{PDEs}} by Optimal Control Strategies: Application to {{Navier}}--{{Stokes}} Equations with Model Order Reduction},
  shorttitle = {Driving Bifurcating Parametrized Nonlinear {{PDEs}} by Optimal Control Strategies},
  author = {Pichi, Federico and Strazzullo, Maria and Ballarin, Francesco and Rozza, Gianluigi},
  year = 2022,
  journal = {ESAIM: Mathematical Modelling and Numerical Analysis},
  volume = {56},
  number = {4},
  pages = {1361--1400},
  publisher = {EDP Sciences},
  doi = {10.1051/m2an/2022044}
}

@article{DengLoworderModelSuccessive2020a,
  title = {Low-Order Model for Successive Bifurcations of the Fluidic Pinball},
  author = {Deng, Nan and Noack, Bernd R. and Morzy{\'n}ski, Marek and Pastur, Luc R.},
  year = 2020,
  journal = {Journal of Fluid Mechanics},
  volume = {884},
  pages = {A37},
  doi = {10.1017/jfm.2019.959}
}

@article{HerreroRBReducedBasis2013,
  title = {{{RB}} ({{Reduced}} Basis) for {{RB}} ({{Rayleigh}}--{{B\'enard}})},
  author = {Herrero, H. and Maday, Y. and Pla, F.},
  year = 2013,
  journal = {Computer Methods in Applied Mechanics and Engineering},
  volume = {261--262},
  pages = {132--141},
  doi = {10.1016/j.cma.2013.02.018}
}

@unpublished{PiaSurrogateNormalformsNumerical2025,
  title = {Surrogate Normal-Forms for the Numerical Bifurcation and Stability Analysis of Navier-Stokes Flows via Machine Learning},
  author = {Pia, Alessandro Della and Patsatzis, Dimitrios G. and Rozza, Gianluigi and Russo, Lucia and Siettos, Constantinos},
  year = 2025,
  number = {arXiv:2506.21275},
  eprint = {2506.21275},
  primaryclass = {physics},
  publisher = {arXiv},
  note = {Preprint, arXiv:2506.21275},

  doi = {10.48550/arXiv.2506.21275},
  archiveprefix = {arXiv}
}

@article{BruntonDiscoveringGoverningEquations2016,
  title = {Discovering Governing Equations from Data by Sparse Identification of Nonlinear Dynamical Systems},
  author = {Brunton, Steven L. and Proctor, Joshua L. and Kutz, J. Nathan},
  year = 2016,
  journal = {Proceedings of the National Academy of Sciences},
  volume = {113},
  number = {15},
  pages = {3932--3937},
  publisher = {Proceedings of the National Academy of Sciences},
  doi = {10.1073/pnas.1517384113}
}

@article{ContiReducedOrderModeling2023,
  title = {Reduced Order Modeling of Parametrized Systems through Autoencoders and {{SINDy}} Approach: Continuation of Periodic Solutions},
  shorttitle = {Reduced Order Modeling of Parametrized Systems through Autoencoders and {{SINDy}} Approach},
  author = {Conti, Paolo and Gobat, Giorgio and Fresca, Stefania and Manzoni, Andrea and Frangi, Attilio},
  year = 2023,
  journal = {Computer Methods in Applied Mechanics and Engineering},
  volume = {411},
  pages = {116072},
  publisher = {Elsevier}
}

@article{FarrellDeflationTechniquesFinding2015,
  title = {Deflation {{Techniques}} for {{Finding Distinct Solutions}} of {{Nonlinear Partial Differential Equations}}},
  author = {Farrell, P. E. and Birkisson, {\'A}. and Funke, S. W.},
  year = 2015,
  journal = {SIAM Journal on Scientific Computing},
  volume = {37},
  number = {4},
  pages = {A2026-A2045},
  publisher = {{Society for Industrial and Applied Mathematics}},
  doi = {10.1137/140984798}
}

@article{PichiGraphConvolutionalAutoencoder2024,
  title = {A Graph Convolutional Autoencoder Approach to Model Order Reduction for Parametrized {{PDEs}}},
  author = {Pichi, Federico and Moya, Beatriz and Hesthaven, Jan S.},
  year = 2024,
  journal = {Journal of Computational Physics},
  volume = {501},
  pages = {112762},
  doi = {10.1016/j.jcp.2024.112762}
}

@article{LiDatadrivenModelingBifurcation2025,
  title = {Data-Driven Modeling of Bifurcation Systems by Learning the Bifurcation Parameter Generalization},
  author = {Li, Shanwu and Yang, Yongchao},
  year = 2025,
  journal = {Nonlinear Dynamics},
  volume = {113},
  number = {2},
  pages = {1163--1174},
  doi = {10.1007/s11071-024-10304-8}
}

@article{BoulleBifurcationAnalysisTwodimensional2022,
  title = {Bifurcation Analysis of Two-Dimensional {{Rayleigh--B}}\textbackslash 'enard Convection Using Deflation},
  author = {Boull{\'e}, Nicolas and Dallas, Vassilios and Farrell, Patrick E.},
  year = 2022,
  journal = {Physical Review E},
  volume = {105},
  number = {5},
  eprint = {2102.10576},
  primaryclass = {physics},
  pages = {055106},
  doi = {10.1103/PhysRevE.105.055106},
  archiveprefix = {arXiv}
}

@article{OlshanskiiApproximatingBranchSolutions2025,
  title = {Approximating a Branch of Solutions to the {{Navier}}--{{Stokes}} Equations by Reduced-Order Modeling},
  author = {Olshanskii, Maxim A. and Rebholz, Leo G.},
  year = 2025,
  journal = {Journal of Computational Physics},
  volume = {524},
  pages = {113728},
  doi = {10.1016/j.jcp.2025.113728},
  urldate = {2025-01-21}
}

@book{UeckerNumericalContinuationBifurcation2021,
  title = {Numerical {{Continuation}} and {{Bifurcation}} in {{Nonlinear PDEs}}},
  author = {Uecker, Hannes},
  year = 2021,
  series = {Other {{Titles}} in {{Applied Mathematics}}},
  publisher = {{Society for Industrial and Applied Mathematics}},
  doi = {10.1137/1.9781611976618},
  isbn = {978-1-61197-660-1}
}

@article{ShahabNeuralNetworksBifurcation2025,
  title = {Neural Networks for Bifurcation and Linear Stability Analysis of Steady States in Partial Differential Equations},
  author = {Shahab, Muhammad Luthfi and Susanto, Hadi},
  year = 2024,
  journal = {Applied Mathematics and Computation},
  volume = {483},
  pages = {128985},
  doi = {10.1016/j.amc.2024.128985}
}

@article{VenturiStochasticBifurcationAnalysis2010,
  title = {Stochastic Bifurcation Analysis of {{Rayleigh}}--{{B\'enard}} Convection},
  author = {Venturi, Daniele and Wan, Xiaoliang and Karniadakis, George Em},
  year = 2010,
  journal = {Journal of Fluid Mechanics},
  volume = {650},
  pages = {391--413},
  publisher = {Cambridge University Press},
  doi = {10.1017/S0022112009993685}
}

@article{lemaitre2009,
  title={A Newton method for the resolution of steady stochastic Navier-Stokes equations},
  author={Le Ma{\^\i}tre, Olivier P.},
  journal={Computers \& Fluids},
  volume={38},
  number={8},
  pages={1566--1579},
  year={2009},
  publisher={Elsevier},
  doi={10.1016/j.compfluid.2009.01.009}
}

@article{patil2023,
  title={Reduced-Order Modeling with Time-Dependent Bases for PDEs with Stochastic Boundary Conditions},
  author={Patil, Prerna and Babaee, Hessam},
  journal={SIAM/ASA Journal on Uncertainty Quantification},
  volume={11},
  number={2},
  pages={727--756},
  year={2023},
  publisher={SIAM},
  doi={10.1137/21M1468097}
}

\appendix
\section{}\label{appendix}

\subsubsection*{Bézout's Theorem}
In Section~\ref{sec:ode-bifurcations}, if $n=1$ and the function $f$ is polynomial in $u$, the Galerkin projection is formulated as a system of $N_{PC}+1$ polynomial equations. To bound the maximum number of potential solutions and justify the emergence of spurious, oscillating roots, we rely on the real affine version of Bézout's Theorem~\cite{Fulton1998, SommeseWampler2005}.

\begin{thm}\label{thm:Bezout}
    Consider $m$ affine hypersurfaces $M_i \subset \R^m$, each defined by a polynomial $f_i \in \R[x_1,\dots,x_m]$ of degree $d_i$. If these do not have any common component and $M_1 \cap \dots \cap M_m$ consists of a finite number of points, then the number of intersection points, counted with multiplicity $m_P$, satisfies:
    \begin{equation*}
        \sum_{P \in (M_1 \cap \dots \cap M_m)} m_P \leq d_1 \cdot \ldots \cdot d_m.
    \end{equation*}
\end{thm}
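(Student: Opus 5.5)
This is the classical real affine Bézout bound. I would prove it through complexification, homogenization, and the projective Bézout theorem.

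\textbf{Complexify and homogenize.} Each real solution of $f_1=\dots=f_m=0$ is also a point of the complex affine variety defined by the same polynomials in $\mathbb{C}^m$. The intersection multiplicity $m_P$ is defined as the length of the local ring $\mathbb{C}[x]_P/(f_1,\dots,f_m)$, and it does not change when we pass from $\R$ to $\mathbb{C}$. So it suffices to bound $\sum_{P} m_P$ over the isolated complex affine points.

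Next, homogenize each $f_i$ to $F_i(x_0,\dots,x_m)=x_0^{d_i}f_i(x/x_0)$. This defines projective hypersurfaces $H_i\subset\mathbb{P}^m_{\mathbb{C}}$ of degree $d_i$. The affine chart $x_0\neq0$ embeds $\mathbb{C}^m$, and affine intersection points correspond to projective intersection points with the same local multiplicity, because the local rings agree on the chart.

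\textbf{Apply refined Bézout.} I would now invoke Fulton's refined Bézout theorem (Intersection Theory, Ex.~12.3.1 / Thm.~12.3). For hypersurfaces $H_1,\dots,H_m$ in $\mathbb{P}^m$ with intersection $Z$, let $Z_j$ range over the distinguished varieties of the intersection. The theorem gives
\[
\sum_j \deg(Z_j)\le \prod_i d_i ,
\]
and it also gives $\sum_j m_j\deg Z_j \le \prod_i d_i$, where the $m_j$ are intersection multiplicities. The key point is that isolated points of $Z$ are distinguished varieties, and their multiplicity is at least the local length $m_P$. This holds even if $Z$ has positive-dimensional components at infinity. Summing over the isolated affine points then gives $\sum_P m_P\le d_1\cdots d_m$.

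\textbf{Where the hypotheses enter, and the main obstacle.} The finiteness assumption in the statement guarantees that every affine intersection point is isolated. The "no common component" assumption rules out a degenerate intersection in the affine part.

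The main obstacle is the behaviour at infinity. The affine points may be finitely many while the projective closures still meet in a positive-dimensional set at $x_0=0$, so the naive equality form of Bézout fails there. This is exactly why I would rely on the refined, inequality version rather than a dimension count.

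As an alternative route, one can avoid the heavy theory with a perturbation argument. Replace $F_i$ by $F_i+\varepsilon x_0^{d_i-1}\ell_i$ for generic linear forms $\ell_i$, chosen so that the intersection becomes finite in $\mathbb{P}^m$. Apply classical Bézout, which gives exactly $\prod_i d_i$ points counted with multiplicity. Then use upper semicontinuity of the local length (conservation of number) to see that each isolated affine point $P$ is the limit of at least $m_P$ perturbed solutions.
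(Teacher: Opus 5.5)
The paper does not prove this statement; it only cites Fulton and Sommese--Wampler. Your main route (complexify, homogenize, refined B\'ezout with positivity) is the argument behind the Fulton reference, and your alternative is essentially the total-degree homotopy of Sommese--Wampler, so the overall architecture is sound. The genuine gap is the sentence ``the finiteness assumption guarantees that every affine intersection point is isolated.'' The hypothesis concerns only the \emph{real} set $M_1\cap\dots\cap M_m\subset\R^m$. For $m\ge 3$, its finiteness does not make its points isolated in the complex zero set, even when the $f_i$ are pairwise coprime, so ``no common component'' does not rule out a degenerate intersection either. For example, in $\R^3$ take $f_1=z-x^2-y^2$, $f_2=z+x^2+y^2$, $f_3=z$. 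The real intersection is just the origin, but $(f_1,f_2,f_3)=(z,x^2+y^2)$, so over $\mathbb{C}$ the origin lies on the lines $z=0$, $x=\pm iy$. The local ring at $0$ is $\mathcal{O}_{\mathbb{A}^2,0}/(x^2+y^2)$, which is one-dimensional and hence of infinite length, already over $\R$. Thus $m_0=\infty>4=d_1d_2d_3$: with your definition of $m_P$ the statement as literally written is false, and this step cannot be closed. You must state the hypothesis as finiteness of $V_{\mathbb{C}}(f_1,\dots,f_m)\subset\mathbb{C}^m$, or isolation of each real point in it, which is what the cited sources assume. For $m=2$ this coincides with the no-common-component condition, since coprimality does not depend on the base field. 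Under that hypothesis your argument is complete. Isolated points are irreducible components of the projective intersection, and their contribution to $H_1\cdots H_m$ equals the local length because $\mathcal{O}_{\mathbb{P}^m,P}$ is Cohen--Macaulay. All other distinguished contributions have nonnegative degree because $\bigoplus_i\mathcal{O}(d_i)$ is globally generated.

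Your alternative route fails exactly at the obstacle you identify. For $d_i\ge 2$ the term $\varepsilon x_0^{d_i-1}\ell_i$ vanishes on $\{x_0=0\}$. So the perturbed hypersurfaces meet the hyperplane at infinity in the same set as before, namely the common zeros of the leading forms $f_i^{\mathrm{top}}$. If that set is positive-dimensional, the perturbed intersection is still infinite and classical B\'ezout does not apply. You need to perturb the leading forms instead, e.g.\ $F_i+\varepsilon x_i^{d_i}$. The points $[s:t]\in\mathbb{P}^1$ for which the forms $s f_i^{\mathrm{top}}+t x_i^{d_i}$ have a common zero in $\mathbb{P}^{m-1}$ form a closed set missing $[0:1]$, hence a finite set. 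For all other $\varepsilon$ the intersection misses $\{x_0=0\}$, so it is finite with exactly $\prod_i d_i$ points counted with multiplicity. Also, upper semicontinuity of fibre length gives the wrong inequality: it yields at most $m_P$ nearby solutions. To get at least $m_P$ you need flatness. Near $(P,0)$ the family $\{f+\varepsilon g=0\}\subset\mathbb{C}^m\times\mathbb{C}$ is a one-dimensional complete intersection, hence Cohen--Macaulay and flat over the $\varepsilon$-line, so exactly $m_P$ solutions (with multiplicity) converge to $P$. With these repairs, and again only under complex isolation of the points $P$, the homotopy route also works.
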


\subsubsection*{Nemytskii Operators}
The theoretical analysis of the weak formulation requires the Nemytskii operator $\mathcal{F}$ to be well-behaved over continuous functions. The following proposition rigorously establishes its local Lipschitz continuity, used to derive the error bounds in Theorem~\ref{thm:consistency} and to prove the uniqueness result.

\begin{prop}\label{prop:LocalLipschitzNemytskii}
If equation~\eqref{eq:LipCompUnif} holds, the Nemytskii operator $\mathcal{F}$ is locally Lipschitz on $\mathcal{C}(I; \R^n)$ with respect to the supremum norm $\|\cdot\|_\infty$. 
\end{prop}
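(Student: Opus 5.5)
The plan is to fix an arbitrary bounded set of $\mathcal{C}(I;\R^n)$ and show that $\mathcal{F}$ is Lipschitz on it, since local Lipschitz continuity follows at once: every point has a bounded neighbourhood (for instance the unit sup-ball around it). Take $\func{u}_0\in\mathcal{C}(I;\R^n)$ and $r>0$, and set $R\coloneq\|\func{u}_0\|_\infty+r$. Then every $\func{u}$ with $\|\func{u}-\func{u}_0\|_\infty\le r$ satisfies $\|\func{u}(\mu)\|_{\R^n}\le R$ for all $\mu\in I$.

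Next I would apply the uniform Lipschitz hypothesis~\eqref{eq:LipCompUnif} pointwise in $\mu$. For $\func{u}_1,\func{u}_2$ in the ball and each fixed $\mu\in I$, both $\func{u}_1(\mu)$ and $\func{u}_2(\mu)$ lie in $\bar B_R(\vect{0})$. Hence
$$
\|\mathcal{F}(\func{u}_1)(\mu)-\mathcal{F}(\func{u}_2)(\mu)\|_{\R^n}=\|\func{f}(\func{u}_1(\mu),\mu)-\func{f}(\func{u}_2(\mu),\mu)\|_{\R^n}\le L_R\|\func{u}_1(\mu)-\func{u}_2(\mu)\|_{\R^n}\le L_R\|\func{u}_1-\func{u}_2\|_\infty .
$$
The constant $L_R$ does not depend on $\mu$, so taking the supremum over $\mu\in I$ gives $\|\mathcal{F}(\func{u}_1)-\mathcal{F}(\func{u}_2)\|_\infty\le L_R\|\func{u}_1-\func{u}_2\|_\infty$. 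The same argument, run with $R$ equal to the radius of any bounded set, shows Lipschitz continuity on every bounded subset. The paper uses this version with the constant $L_R$ in Theorem~\ref{thm:consistency} and Theorem~\ref{thm:uniqueness}.

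The argument also needs $\mathcal{F}(\func{u})\in\mathcal{C}(I;\R^n)$, i.e.\ that the map actually lands in the stated space. This follows from the joint continuity of $\func{f}$ composed with the continuous map $\mu\mapsto(\func{u}(\mu),\mu)$. The same pointwise bound transfers to the $L^2$ and $L^\infty$ norms over $I_\xi$ after reparametrization, because the ball condition holds pointwise.

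There is no real obstacle. The only point needing care is that uniformity of $L_R$ in $\mu$ is exactly what allows the supremum to be taken; without it the bound would degenerate.
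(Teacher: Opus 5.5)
Your proof is correct and takes the same route as the paper: fix $\mathbf{u}_0$ and $r>0$, set $R=\|\mathbf{u}_0\|_\infty+r$, apply the uniform-in-$\mu$ Lipschitz bound \eqref{eq:LipCompUnif} pointwise, and take the supremum over $\mu\in I$. Your extra remarks, on $\mathcal{F}$ mapping into $\mathcal{C}(I;\R^n)$ and on Lipschitz continuity over arbitrary bounded sets, are also correct and match how the result is used later in the paper.
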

\begin{proof}
Given $\mathbf{u}_0 \in \mathcal{C}(I; \R^n)$ and $r > 0$, define $R \coloneq \|\mathbf{u}_0\|_\infty + r$. For any $\mathbf{u}, \mathbf{v} \in \mathcal{C}(I; \R^n)$ such that $\|\mathbf{u} - \mathbf{u}_0\|_\infty \le r$ and $\|\mathbf{v} - \mathbf{u}_0\|_\infty \le r$, it follows that $\|\mathbf{u}\|_\infty, \|\mathbf{v}\|_\infty \le R$. Thus, $\mathbf{u}(\mu), \mathbf{v}(\mu)$ lie within the closed ball $\bar{B}_R(\mathbf{0}) \subset \R^n$ for all $\mu \in I$. 
By the uniform Lipschitz assumption~\eqref{eq:LipCompUnif} on $\mathbf{f}$:
$$
\|\mathcal{F}(\mathbf{u})(\mu) - \mathcal{F}(\mathbf{v})(\mu)\|_{\R^n} = \|\mathbf{f}(\mathbf{u}(\mu), \mu) - \mathbf{f}(\mathbf{v}(\mu), \mu)\|_{\R^n} \le L_R \|\mathbf{u}(\mu) - \mathbf{v}(\mu)\|_{\R^n}.
$$
Taking the supremum over $\mu \in I$ yields $\|\mathcal{F}(\mathbf{u}) - \mathcal{F}(\mathbf{v})\|_\infty \le L_R \|\mathbf{u} - \mathbf{v}\|_\infty$, proving the operator is Lipschitz on the ball of radius $r$ centred at $\mathbf{u}_0$.
\end{proof}

\subsubsection*{Newton--Kantorovich Theorem}
To prove Theorem~\ref{thm:existence_of_PC_ls_sol}, we cast the projection as a root-finding task in a finite-dimensional space. The Newton--Kantorovich Theorem~\cite{kantorovich1964functional, ortega1968newton} provides the foundation to guarantee both the existence of these discrete roots and their local uniqueness around the true branch.

\begin{thm}\label{thm:NewtonKantorovich}
    Let $X$ and $Y$ be Banach spaces and $F \colon D \subset X \to Y$. Suppose that on an open convex set $D_0 \subset D$, $F$ is Fréchet differentiable and
    $$
    \|F'(x) - F'(y)\| \leq K \|x - y\|, \quad x, y \in D_0.
    $$
    For some $x_0 \in D_0$, assume that $\Gamma_0 := [F'(x_0)]^{-1}$ is defined on all $Y$ and that $h := \beta K \eta \leq \frac{1}{2}$ where $\|\Gamma_0\| \leq \beta$ and $\|\Gamma_0 F(x_0)\| \leq \eta$. Set
    $$
    r^* = \frac{1}{\beta K} \left(1 - \sqrt{1 - 2h} \right), \quad r^{**} = \frac{1}{\beta K} \left(1 + \sqrt{1 - 2h} \right),
    $$
    and suppose that $S := \left\{ x \;\middle|\; \|x - x_0\| \leq r^* \right\} \subset D_0$. Then, the Newton iterates 
    $$
    x_{k+1} = x_k - [F'(x_k)]^{-1} F(x_k), \quad k = 0, 1, \dots
    $$
    are well defined, lie in $S$, and converge to a solution $x^*$ of $F(x) = 0$ which is unique in 
    $$
    D_0 \cap \left\{ x \;\middle|\; \|x_0 - x\| < r^{**} \right\}.
    $$
    Moreover, if $h < \frac{1}{2}$ the order of convergence is at least quadratic.
\end{thm}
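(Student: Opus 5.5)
The statement to prove is the Newton--Kantorovich Theorem~\ref{thm:NewtonKantorovich}. I would follow the classical majorant-sequence argument. First, set $t_0=0$ and introduce the scalar quadratic majorant $\varphi(t)=\frac{\beta K}{2}t^2-t+\eta$. Its roots are exactly $r^*\le r^{**}$, and they are real because $h\le\frac12$. Let $t_k$ be the Newton iterates for $\varphi$ started at $t_0=0$. A direct computation shows that $t_k$ increases monotonically to $r^*$, and that $t_1=\eta$.

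The core of the proof is an induction establishing three claims at once:
\begin{itemize}
\item[(a)] $x_k\in S$;
\item[(b)] $F'(x_k)$ is invertible with $\|[F'(x_k)]^{-1}\|\le -1/\varphi'(t_k)$;
\item[(c)] $\|x_{k+1}-x_k\|\le t_{k+1}-t_k$.
\end{itemize}

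For (b), write $F'(x_k)=F'(x_0)\bigl(I-\Gamma_0(F'(x_0)-F'(x_k))\bigr)$. By the Lipschitz bound, $\|\Gamma_0(F'(x_0)-F'(x_k))\|\le\beta K\|x_k-x_0\|\le\beta K t_k<1$. A Neumann series then gives invertibility with norm at most $\beta/(1-\beta Kt_k)=-1/\varphi'(t_k)$.

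For (c), use the Newton identity $F(x_k)=F(x_k)-F(x_{k-1})-F'(x_{k-1})(x_k-x_{k-1})$. By the integral mean value theorem on the convex set $D_0$, this has norm at most $\frac K2\|x_k-x_{k-1}\|^2\le\frac K2(t_k-t_{k-1})^2$. The scalar recursion gives the identical expression $\varphi(t_k)=\frac{\beta K}{2}(t_k-t_{k-1})^2$. Combining these with (b) yields $\|x_{k+1}-x_k\|\le t_{k+1}-t_k$.

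Claim (a) follows by telescoping: $\|x_{k+1}-x_0\|\le t_{k+1}<r^*$, so $x_{k+1}$ stays in $S\subset D_0$. Consequently $(x_k)$ is Cauchy and converges to some $x^*\in S$. Passing to the limit in $\|F(x_k)\|\le\frac K2(t_k-t_{k-1})^2\to0$ and using continuity gives $F(x^*)=0$. When $h<\frac12$, $r^*$ is a simple root of $\varphi$, so the $t_k$ converge quadratically, and this rate transfers to the $x_k$ through the error bound $\|x^*-x_k\|\le r^*-t_k$.

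For uniqueness, suppose $y^*$ is a root with $\|y^*-x_0\|<r^{**}$ in $D_0$. Apply the same majorant comparison to the differences, showing by induction that $\|y^*-x_k\|\le s_k-t_k$. Here $s_k$ is generated by the scalar Newton-type recursion started from $s_0=\|y^*-x_0\|<r^{**}$. Since any point strictly below $r^{**}$ is attracted to $r^*$ under this map, $s_k-t_k\to0$, and therefore $y^*=x^*$.

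I expect the uniqueness step to be the main obstacle. When $y^*$ lies outside $S$, the Lipschitz estimates must be taken along segments from $x_k$ to $y^*$, and one must check that these segments stay in $D_0$; this is the reason convexity of $D_0$ is assumed. The behaviour of the scalar comparison near $r^{**}$ also needs care, especially in the borderline case $h=\frac12$, where $r^*=r^{**}$ and the uniqueness ball collapses to the open ball of radius $r^*$. In that regime I would first try the direct estimate based on $\int_0^1 F'(x_0+\tau(y^*-x_0))\,d\tau$, perturbed from $F'(x_0)$, instead of the iterative comparison.
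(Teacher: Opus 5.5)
The paper gives no proof of this theorem; it quotes it from Kantorovich--Akilov and Ortega, so your proposal has to stand on its own. The existence and convergence part does: it is the classical majorant argument, with one slip to fix. The Neumann-series bound in (b) is $\|[F'(x_k)]^{-1}\|\le \beta/(1-\beta K t_k)=-\beta/\varphi'(t_k)$, not $-1/\varphi'(t_k)$. Keep that factor $\beta$: in (c) it cancels the $1/\beta$ in $\|F(x_k)\|\le \tfrac K2(t_k-t_{k-1})^2=\varphi(t_k)/\beta$, giving $\|x_{k+1}-x_k\|\le \varphi(t_k)/(-\varphi'(t_k))=t_{k+1}-t_k$.

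The uniqueness step, however, has a genuine gap. You never say which recursion $s_k$ obeys, and under the natural reading both of your claims fail. That reading is scalar Newton, $s_{k+1}=s_k-\varphi(s_k)/\varphi'(s_k)$, started from $s_0=\|y^*-x_0\|$.
\begin{itemize}
\item The map is undefined at the vertex $1/(\beta K)=(r^*+r^{**})/2$.
\item For $s_0\in(1/(\beta K),r^{**})$ one has $\varphi(s_0)<0<\varphi'(s_0)$. The iterates then jump past $r^{**}$ and converge to $r^{**}$, not $r^*$.
\item For $s_0\in(r^*,1/(\beta K))$ the first step can land below $t_1=\eta$. For instance, $\beta K=1$, $\eta=0.4$, $s_0=0.9$ gives $s_1=-0.05$, so $\|y^*-x_1\|\le s_1-t_1$ is impossible.
\end{itemize}
Attraction of all of $[0,r^{**})$ to $r^*$ does hold for the chord map $t\mapsto t+\varphi(t)$. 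But that map majorizes the simplified Newton iteration, not your $x_k$.

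What the Newton identity actually gives, using (b), is
\[
\|y^*-x_{k+1}\|=\bigl\|[F'(x_k)]^{-1}\bigl(F(y^*)-F(x_k)-F'(x_k)(y^*-x_k)\bigr)\bigr\|\le \frac{\beta K}{2(1-\beta K t_k)}\,\|y^*-x_k\|^2 .
\]
The segment concern you raised is immediate here: $x_k\in S\subset D_0$, $y^*\in D_0$ by hypothesis, and $D_0$ is convex. The identical scalar computation gives $r^{**}-t_{k+1}=\frac{\beta K}{2(1-\beta K t_k)}(r^{**}-t_k)^2$. Comparing with $r^{**}-t_k$ alone only bounds the error by $r^{**}-r^*>0$ when $h<\tfrac12$. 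The missing ingredient is the ratio: with $q:=\|y^*-x_0\|/r^{**}<1$, induction gives $\|y^*-x_k\|\le q^{2^k}(r^{**}-t_k)\to 0$, hence $y^*=x^*$.

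Alternatively, the direct estimate you mention works for every $h\le\tfrac12$ at once, but it must be taken along the segment from $x^*$ to $y^*$, not from $x_0$. Since $F(x^*)=F(y^*)=0$,
\[
y^*-x^*=-\Gamma_0\Bigl(\int_0^1\bigl[F'(x^*+\tau(y^*-x^*))-F'(x_0)\bigr]\,d\tau\Bigr)(y^*-x^*),
\]
so $\|y^*-x^*\|\le \tfrac{\beta K}{2}\bigl(r^*+\|y^*-x_0\|\bigr)\|y^*-x^*\|$. The factor is $<\tfrac{\beta K}{2}(r^*+r^{**})=1$, which forces $y^*=x^*$. In particular, the borderline case $h=\tfrac12$ is not an obstacle for either route; in the iterative one, $r^{**}-t_k=r^*-t_k\to0$ directly.
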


\subsubsection*{Convergence of Legendre Projections}
The strong error bounds of the least-squares approximation $u_{LS,N}$ in Theorem~\ref{thm:consistency} depend on the spectral convergence properties of Legendre polynomials. Details about this theorem and the relative proofs can be found in \cite{xiu2010, boyd2001chebyshev, canuto2006spectral}

\begin{thm}\label{thm:convergence_ls}
    Let $\mathbf{f}\in L^2(I; \R^n)$ and $\tilde{\mathbf{f}}_N$ be its best $L^2$-approximation in $(\Pi_N)^n$, written with respect to the orthogonal basis of Legendre polynomials. Then,
    
    \begin{enumerate}
        \item if $\mathbf{f}\in H^s(I; \R^n)$, there exists a constant $C$ independent of $N$ such that
        $$
        \|\mathbf{f}-\tilde{\mathbf{f}}_N\|_{L^2(I; \R^n)}\leq \frac{C}{N^s}\|\mathbf{f}\|_{H^s(I; \R^n)};
        $$
        \item if $\mathbf{f}$ is of class $\mathcal{C}^\infty$, the convergence is super-polynomial; moreover, if $\mathbf{f}$ is analytical on a Bernstein ellipse, with $\rho>1$, then
        $$
        \|\mathbf{f}-\tilde{\mathbf{f}}_N\|_{L^2(I; \R^n)}=\mathcal{O}(\rho^{-N});
        $$
    \end{enumerate}
    
\end{thm}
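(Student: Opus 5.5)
The statement is the standard convergence result for Legendre projections in Theorem~\ref{thm:convergence_ls}. The first step is to reduce everything to the scalar case. Because $(\Pi_N)^n$ is a product space and the Euclidean norm splits over components, the best $L^2$-approximation $\tilde{\mathbf{f}}_N$ is the componentwise Legendre truncation. We have
$$
\|\mathbf{f}-\tilde{\mathbf{f}}_N\|_{L^2}^2=\sum_{m=1}^n\|f_m-\tilde f_{m,N}\|_{L^2}^2,
$$
and likewise $\|\mathbf{f}\|_{H^s}^2$ is equivalent to $\sum_m\|f_m\|_{H^s}^2$. So it suffices to prove both claims for a scalar $f$ on an interval, which after an affine change of variables we take to be $[-1,1]$. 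The constant only picks up a factor depending on the interval length.

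\textbf{Algebraic rate.} For integer $s$ we expand $f=\sum_k \hat f_k L_k$. By Parseval, $\|f-\tilde f_N\|_{L^2}^2=\sum_{k>N}|\hat f_k|^2\|L_k\|^2$. The key tool is that Legendre polynomials are eigenfunctions of the Sturm--Liouville operator $\mathcal{A}u=-((1-x^2)u')'$, with eigenvalues $\lambda_k=k(k+1)$. Integrating by parts, where the boundary terms vanish thanks to the weight $1-x^2$, we get $\hat f_k=\lambda_k^{-1}\langle \mathcal{A}f,L_k\rangle/\|L_k\|^2$. Iterating this yields
$$
\sum_{k>N}|\hat f_k|^2\|L_k\|^2\le \lambda_N^{-s}\|\mathcal{A}^{s/2}f\|^2,
$$
where odd $s$ are handled through the weighted $H^1$-seminorm $\int(1-x^2)|u'|^2$. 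We then bound $\|\mathcal{A}^{s/2}f\|$ by $C\|f\|_{H^s}$. This holds because the coefficients $(1-x^2)$ and their derivatives are bounded, so $\mathcal{A}^{s/2}f$ is a combination of derivatives of $f$ up to order $s$. Since $\lambda_N^{-s/2}\sim N^{-s}$, this gives the claimed bound. Non-integer $s$ then follows by interpolation between consecutive integer orders, because the projection error operator $I-P_N$ is linear.

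\textbf{Analytic case.} If $f$ is analytic inside the Bernstein ellipse $E_\rho$, then by the Bernstein estimate for best uniform approximation, combined with the Chebyshev expansion through contour integration, there is a polynomial $p_N$ with $\|f-p_N\|_{L^\infty}\le C\rho^{-N}$. Since the $L^2$-projection is the best $L^2$ approximation, we obtain
$$
\|f-\tilde f_N\|_{L^2}\le\|f-p_N\|_{L^2}\le\sqrt{2}\,C\rho^{-N}.
$$
Super-polynomial convergence for $\mathcal{C}^\infty$ functions follows at once from the algebraic rate, since $f\in H^s$ for every $s$.

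The main obstacle is the bound $\|\mathcal{A}^{s/2}f\|\lesssim\|f\|_{H^s}$ together with the vanishing of the boundary terms in the repeated integrations by parts. Both steps require care at the endpoints, where the weight $1-x^2$ degenerates. I would handle them by proving the estimate first for smooth $f$ and then extending to all of $H^s$ by density.
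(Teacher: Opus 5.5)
Your proposal is correct. The paper gives no proof of its own and defers to Canuto et al., Xiu and Boyd, where this is the standard argument: the algebraic rate comes from the Legendre Sturm--Liouville identity $\hat f_k=\lambda_k^{-1}\widehat{(\mathcal{A}f)}_k$, iterated and then interpolated, and the analytic rate from Bernstein's Chebyshev-coefficient estimate combined with the $L^2$-optimality of the projection.

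One point to tighten: the $\mathcal{O}(\rho^{-N})$ bound needs $f$ bounded in $E_\rho$, for example analytic on the closed ellipse. Mere analyticity in the open ellipse only gives $\mathcal{O}(\rho'^{-N})$ for every $\rho'<\rho$.
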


\end{document}